\documentclass[12pt, reqno]{amsart}
\usepackage{varioref}
\usepackage{hyperref}
\usepackage{thmtools}
\usepackage{comment}
\usepackage[capitalize,nameinlink,noabbrev]{cleveref}
\usepackage{longtable}
\usepackage{caption}
\usepackage{booktabs}
\usepackage{makecell}
\usepackage{stmaryrd}
\expandafter\def\csname opt@stmaryrd.sty\endcsname
{only,shortleftarrow,shortrightarrow}
\usepackage{extpfeil}
\usepackage{amsmath,amssymb,amsthm}
\usepackage{hyperref}
\hypersetup{colorlinks=true,urlcolor=blue,citecolor=blue,linkcolor=blue}
\usepackage{courier}
\usepackage{tikz}
\usepackage{tikz-cd}
\usetikzlibrary{calc,matrix,arrows,decorations.markings}
\usepackage{array}
\usepackage{color}
\usepackage{enumerate}
 \usepackage{cancel}
\usepackage{nicefrac}
\usepackage{listings}
\usepackage{seqsplit}
\usepackage{colonequals}
\newcommand{\F}{\mathbb{F}}
\newcommand{\Q}{\mathbb{Q}}

\newcommand{\C}{\mathbb{C}}

\newcommand{\Z}{\mathbb{Z}}

\newcommand{\Aut}{{\rm Aut}}

\newcommand{\Sym}{\operatorname{Sym}}

\newcommand{\id}{\operatorname{id}}

\renewcommand{\P}{\mathbb{P}}

\newcommand{\gon}{\operatorname{gon}}
\newcommand{\Fix}{\operatorname{Fix}}

\DeclareFontFamily{U}{wncy}{}
    \DeclareFontShape{U}{wncy}{m}{n}{<->wncyr10}{}
    \DeclareSymbolFont{mcy}{U}{wncy}{m}{n}
    \DeclareMathSymbol{\Sh}{\mathord}{mcy}{"58}

\newtheorem{theorem}{Theorem}
\theoremstyle{definition}

\newtheorem{remark}[theorem]{Remark}
\newtheorem{proposition}[theorem]{Proposition}

\newtheorem{lemma}[theorem]{Lemma}
\newtheorem{corollary}[theorem]{Corollary}

\usepackage{xcolor}

\newcolumntype{L}[1]{>{\raggedright\arraybackslash}p{#1}}
\renewcommand{\arraystretch}{1.3}

\begin{document}

\author{Oana Padurariu}
\address{Oana Padurariu \\
Max-Planck-Institut für Mathematik Bonn\\
Germany}
\urladdr{https://sites.google.com/view/oanapadurariu/home}
\email{opadurariu@mpim-bonn.mpg.de}

\title[Modular curves $X_0(N)$ and Shimura curves $X_0^D(N)$ that embed in $\P^1 \times \P^1$]{Modular and Shimura curves $X_0^D(N)$ embedded in $\P^1 \times \P^1$}

\begin{abstract}
We classify the modular curves $X_0(N)$ that admit a smooth embedding in $\P^1 \times \P^1$ and obtain a partial classification for the Shimura curves $X_0^D(N)$. We also prove that, for $D>1$, a Shimura curve $X_0^D(N)$ admits a smooth plane model if and only if its genus is at most one.
\end{abstract}

\maketitle

\section{Introduction}

Finding suitable models for curves has been a rich area of research and is crucial when trying to study their properties, including their set of points defined over a number field. The modular curves of Shimura type that admit a smooth plane model were determined by Assaf--Anni-Lorenzo-García \cite{AALG23}. A natural question is which modular and Shimura curves embed  \footnote{All curves and surfaces are considered over $\C$ unless otherwise specified.
In particular, by $\P^1,\P^1 \times \P^1$ and $\P^2$ we mean $\P^1_{\C}$,$\P^1_{\C} \times \P^1_{\C}$ and $\P^2_{\C}$, respectively.} into other smooth surfaces.
In this paper we consider smooth quadrics, which over $\C$ are all isomorphic to $\P^1 \times \P^1$. Since a point on $X_0(N)$  parametrizes an elliptic curve $E$ together with a cyclic isogeny $\iota : E \to E'$ with kernel cyclic of order $N$, the assignment
\[
(E,\iota) \mapsto (j(E),j(E'))
\]
gives a map $X_0(N) \to \P^1 \times \P^1$, but this is not necessarily a closed immersion. In this paper we prove that there are finitely many pairs $(D,N)$ with discriminant $D$ and level $N$, where $D \ge 1$ is a product of an even number of distinct primes and $N$ is coprime to $D$ such that $X_0^D(N)$ admits a smooth model in $\P^1 \times \P^1$. 

The Magma code accompanying the paper can be found in \cite{Repo}. The main result for modular curves is the following:

\begin{theorem}
The modular curve $X_0(N)$ geometrically embeds in  $\P^1 \times \P^1$ if and only if $\gon_{\C}(X_0(N)) \le 2$, or $X_0(N)$ is a genus $4$ curve with $N \in \{38,44,53,54,61 \}$ with models over $\P^1 \times \P^1$ provided in \cref{tab: P1twice_models}.
\end{theorem}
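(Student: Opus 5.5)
Write $g$ for the genus of $X_0(N)$. The plan rests on the classical description of smooth curves on $Q=\P^1\times\P^1$. A smooth irreducible curve $C\subset Q$ has a bidegree $(a,b)$. By adjunction its genus is $g=(a-1)(b-1)$. The two rulings restrict to maps $C\to\P^1$ of degrees $b$ and $a$, where we take $a\le b$.

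\emph{The ``if'' direction.} Every curve of gonality at most $2$ embeds in $Q$:
\begin{itemize}
\item $\P^1$ embeds as a curve of bidegree $(1,1)$;
\item an elliptic curve embeds with bidegree $(2,2)$;
\item a hyperelliptic curve of genus $g$ embeds with bidegree $(2,g+1)$. One takes the product of the $g^1_2$ with a general base-point-free pencil of degree $g+1$, whose sections separate the two points of each fibre of the $g^1_2$.
\end{itemize}
The genus-$4$ cases $N\in\{38,44,53,54,61\}$ are handled by exhibiting explicit equations, namely the models of bidegree $(3,3)$ in \cref{tab: P1twice_models}. For each $N$ one checks in Magma that the model is smooth and isomorphic to $X_0(N)$.

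\emph{The ``only if'' direction.} Suppose $X_0(N)$ embeds with bidegree $(a,b)$ and $a\ge3$. Then $\gon_{\C}\le a$ and $(a-1)^2\le g$.

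\emph{Case $a=3$.} Here $X_0(N)$ is trigonal with $g=2(b-1)$ even, and it carries a second pencil of degree $b=g/2+1$. The two pencils together give a birational map onto the embedded curve. The case $b=3$ is genus $4$: the curve is non-hyperelliptic and its canonical model lies on a smooth quadric, so it has exactly two distinct $g^1_3$'s. Using the known list of trigonal $X_0(N)$ (Hasegawa--Shimura) and of genus-$4$ $X_0(N)$, we keep those where the quadric containing the canonical model has rank $4$. This is computed from the canonical model in Magma, and the expected survivors are exactly the five listed levels. For $b\ge4$ we have $g\ge6$ even. The trigonal $X_0(N)$ of genus at least $5$ form a finite list, and for each we rule out an embedding of bidegree $(3,b)$ with $b\ge 4$, i.e.\ $g=2b-2$. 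By Maroni theory, a trigonal curve of genus at least $5$ has a unique $g^1_3$ and lies on a rational normal scroll. An embedding in $Q$ with $a=3$ would force its Maroni invariant to be specific. Alternatively, via Castelnuovo--Severi, a base-point-free $g^1_b$ with $b=g/2+1$ that is independent from the $g^1_3$ must exist, and we check case by case that no such pencil exists. In practice I expect the genus-$\ge5$ trigonal $X_0(N)$ to have odd genus or fail this test, since the relevant ones have small genus.

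\emph{Case $a\ge4$.} Then $\gon_{\C}(X_0(N))\le a\le\sqrt g+1$. Abramovich's bound $\gon_{\C}(X_0(N))\ge \frac{7}{800}[\mathrm{SL}_2(\Z):\Gamma_0(N)]$, combined with $g\le [\mathrm{SL}_2(\Z):\Gamma_0(N)]/12+1$, gives $\gon\gg g$. Hence $\sqrt g+1\ge \gon\gg g$, which bounds $g$ and so leaves finitely many $N$. For those $N$, we use the known tetragonal/pentagonal classifications (Najman--Orlić, Jeon) to check whether $g=(a-1)(b-1)$ is achievable together with $\gon\le a$. For the remaining levels we verify computationally that no two pencils of degrees $a,b$ embed the curve. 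One tool is the Castelnuovo--Severi inequality: if $\phi_a,\phi_b$ give a birational image then $g\le(a-1)(b-1)$, with equality forcing smoothness. Another is Brill--Noether dimension counts via Magma.

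\emph{Main obstacle.} I expect the main obstacle to be this finite but potentially large check for $a\ge4$ and for trigonal curves. One must show, for specific $N$, that no pair of pencils realizes equality in Castelnuovo's bound. The bound from Abramovich is weak, so I would first sharpen it using Kim--Sarnak ($7/800\to 975/4096$) to cut the list to manageable size.
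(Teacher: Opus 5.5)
Your treatment of $\gon_{\C}\le 2$, of $a=3$ (Hasegawa--Shimura plus the rank of the canonical quadric), and of the finiteness reduction via Abramovich/Kim--Sarnak agrees with the paper. Two small points there:
\begin{itemize}
\item The rank-$4$ canonical quadric already gives the $(3,3)$-embeddings, so no Magma isomorphism check is needed.
\item Hasegawa--Shimura leaves no $X_0(N)$ of gonality exactly $3$ in genus $\ge 5$, so your Maroni discussion for $a=3$, $b\ge 4$ is unnecessary.
\end{itemize}

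\textbf{The gap: the case $a\ge 5$.} (The case $a=4$ is salvageable with the tetragonal classification plus Schreyer's criterion, as the paper does.) After the genus bound you still face every level with $g(X_0(N))=(a-1)(b-1)$ for $5\le a\le b\le 11$. That is many curves of genus between $16$ and $81$, and none of your tools eliminates them.
\begin{itemize}
\item Castelnuovo--Severi applied to the two ruling maps gives $g\le (a-1)(b-1)$. Here $g=(a-1)(b-1)$ holds by hypothesis, so every embedding satisfies this and it excludes nothing. It only restates that an embedding amounts to a pair of independent pencils of degrees $a$ and $b$, which is the original question.
\item The literature does not determine $\gon_{\C}X_0(N)$ at these levels for $5\le a\le 10$. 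The paper has to prove, for instance, $\gon_{\C}X_0(N)\ge 7$ for several $N$.
\item A direct search for pencils or Brill--Noether loci on curves of genus $25$--$81$ is not feasible. Even in genus $9$ the paper needs Schreyer's Betti-number criterion ($\beta_{2,4}=5$) to show the $g^1_4$ is unique.
\end{itemize}

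\textbf{The missing idea: use the Atkin--Lehner involutions against the geometry of $\P^1\times\P^1$.}
\begin{itemize}
\item By Takahashi's theorem, every automorphism of a smooth curve of bidegree $(a,b)$ with $a,b\ge 3$ extends to $\P^1\times\P^1$. Classifying the involutions there (types $(\tau,\tau)$, $(\tau,1)$, $(1,\tau)$, and the swap) confines $\#\Fix(w_m)$ to a short list depending on $(a,b)$, such as $\{2,2b\}$ or $\{0,4,2a,2b\}$. Ogg's formula can then contradict this.
\item For $a\neq b$, two distinct Atkin--Lehner involutions with $2a$ fixed points force their product to have $2a$ fixed points, which Ogg's counts can also refute.
\item Apply Castelnuovo--Severi to a ruling map together with an Atkin--Lehner quotient $X_0(N)\to X_0(N)/\langle w_m\rangle$, not to the two rulings. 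When $g$ is large relative to $2g(X_0(N)/\langle w_m\rangle)$, the ruling factors through the quotient. This either contradicts the independence of the rulings or forces the quotient to have small gonality.
\item For bidegree $(n,n)$ with $n$ even, a degree-two quotient of genus $(n-1)(n-2)/2$ must be $n/2$-gonal or a smooth plane curve of degree $n$.
\item Lower bounds on gonality come from $\F_q$-point counts combined with the tower theorem.
\end{itemize}
These obstructions cut the candidates down to $19$ levels and then dispose of those one by one. Without them, your argument does not close the case $a\ge 5$.
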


Shimura curves of genus at most $2$ were enumerated by Voight \cite{Voight09}, Gonzalez--Rotger \cite{GR06} provided defining equations for $X_0^D(N)$ of genus $1$, while Guo--Yang \cite{GY17} computed equations for the geometrically hyperelliptic ones.
So our analysis starts with $\gon_{\C}(X_0^D(N)) \ge 3$, leading to the main result for Shimura curves:

\begin{theorem}\label{thm:shimura-p1xp1}
Let $D>1$ be the discriminant of an indefinite quaternion algebra
over $\mathbb{Q}$, let $N\geq 1$ be coprime to $D$, and put
$X \coloneq X_0^D(N)$.
\begin{enumerate}
    \item If $\operatorname{gon}_{\mathbb{C}}(X)\leq 2$, or
    \[
        (D,N)\in\{(6,35),(10,21),(14,17)\},
    \]
    then $X$ admits a geometric embedding in
    $\mathbb{P}^1\times\mathbb{P}^1$.
    In the three listed cases, $X$ has genus $9$ and admits
    an embedding of bidegree $(4,4)$.

    \item Conversely, if $X$ admits a geometric embedding in
    $\mathbb{P}^1\times\mathbb{P}^1$ and
    $\operatorname{gon}_{\mathbb{C}}(X)\geq 3$, then $(D,N)$
    is one of these three pairs or one of the eleven pairs
    in \cref{table: remaining_candidates_Shimura}.
\end{enumerate}
\end{theorem}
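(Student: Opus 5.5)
The proof rests on the standard numerical criterion for curves on a quadric. By adjunction on $\P^1\times\P^1$, a smooth irreducible curve of bidegree $(a,b)$ has genus $g=(a-1)(b-1)$. Such a curve has two projections of degrees $a$ and $b$ to $\P^1$, and these give independent linear systems. Conversely, suppose $X$ has maps $f_1,f_2\colon X\to\P^1$ of degrees $a,b$ with $g=(a-1)(b-1)$ and $f_1\times f_2$ birational onto its image. The image has bidegree $(b,a)$ (or $(a,b)$ after swapping) and arithmetic genus $(a-1)(b-1)=g$, so it has no singularities and $X$ embeds. Thus embedding in $\P^1\times\P^1$ is equivalent to the existence of such a pair of maps.

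For part (1), first suppose $\gon_{\C}(X)\le 2$. If $g\le 1$, then $X\cong\P^1$ embeds as a $(1,0)$ or $(1,1)$ curve, and a genus $1$ curve embeds as a $(2,2)$ curve. If $X$ is hyperelliptic of genus $g$, take the hyperelliptic map of degree $2$ together with a general map of degree $g+1$. This map is birational with the product, since its degree is odd or generic, so $X$ is a $(2,g+1)$ curve. For the three pairs $(6,35),(10,21),(14,17)$, the genus is $9=(4-1)(4-1)$. I will exhibit two degree $4$ maps to $\P^1$, for instance as quotients by Atkin--Lehner involutions composed with maps from the quotient curves when those are of genus $\le 1$ or hyperelliptic. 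Alternatively, I will compute a canonical model in Magma and find two distinct $g^1_4$'s that give a $(4,4)$ model. I will then verify smoothness of the model explicitly, or equivalently check birationality, which gives smoothness by the genus count.

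For part (2), assume $X$ embeds with $\gon_{\C}(X)\ge3$. Then $X$ is a $(a,b)$ curve with $3\le a\le b$, so $g=(a-1)(b-1)$ and $\gon_{\C}(X)\le a\le \sqrt{g}+1$. I will combine this with a lower bound on gonality. Abramovich's bound gives $\gon_{\C}(X)\ge \frac{21}{200}\cdot\frac{\phi(D)\psi(N)}{12}$, and the genus satisfies $g\le 1+\phi(D)\psi(N)/12$. Together these give $\frac{21}{200}(g-1)\le\sqrt g+1$, so $g\lesssim 110$. This leaves finitely many $(D,N)$, which I can list from genus formulas. For each pair, I will check whether $g$ factors as $(a-1)(b-1)$ with $3\le a\le b$ and with $a\le$ the best known upper bound on gonality. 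I will also check $a\ge$ a sharper lower bound on gonality, such as counting $\F_p$-points on the reduction via \v{C}erednik--Drinfeld or Eichler--Selberg trace formulas: $\#X(\F_{p^k})\le \gon\cdot(p^{2k}+1)$. A further test is the Castelnuovo--Severi inequality applied to the two projections and to known Atkin--Lehner quotients.

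The main obstacle is the final elimination step. For moderate genera (roughly $9\le g\le 25$), neither the point-count gonality bounds nor Castelnuovo--Severi may rule out degree-$a$ maps. Eliminating a candidate requires showing that $X$ has no two independent maps of degree $a$ and $b$ in general position, and this is hard without explicit models, since Shimura curves lack $q$-expansions. The candidates that survive all tests are exactly those left unresolved in \cref{table: remaining_candidates_Shimura}.
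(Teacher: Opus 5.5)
\textbf{Part (2) has a genuine gap.} Your proposal ends by asserting that the pairs surviving your tests are exactly the eleven of \cref{table: remaining_candidates_Shimura}. The tests you list cannot get there.

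\emph{Point counts.} They bound $\gon_{\F_q}$ and hence $\gon_{\Q}$, not $\gon_{\C}$. (The inequality should also read $\#\widetilde X(\F_q)\le d(q+1)$, with the same $q$ on both sides.) Passing from arithmetic to geometric gonality needs a rational point and the tower theorem, but $X_0^D(N)(\mathbb{R})=\emptyset$ for $D>1$. The paper gets around this in two ways. Either it uses Castelnuovo--Severi to force the gonal map through an Atkin--Lehner quotient that carries a rational CM point, and counts points there. Or it runs \cref{thm: tower-theorem} on $X$ and kills each intermediate degree separately, as for $D=955,1149$.

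\emph{Castelnuovo--Severi.} It is blind in the balanced cases. For bidegree $(n,n)$ and a Fricke quotient $Y$ of genus $(n-1)(n-2)/2$, one has $2g(Y)+(n-1)=(n-1)^2=g(X)$, so the inequality holds and forces nothing. This is why the paper needs \cref{prop:balanced-double-quotient}: the involution either preserves the rulings, giving $\gon_{\C}(Y)\le n/2$, or swaps them, making $Y$ a smooth plane curve of degree $n$. The plane case is then excluded via fixed points of involutions on plane curves.

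\emph{Fixed-point data.} Most importantly, you have no way to use Ogg's fixed-point counts beyond Riemann--Hurwitz genera. The paper's key new input is Takahashi's extension theorem combined with the classification of involutions of $\P^1\times\P^1$. This gives \cref{lem:fixed-point-obstruction-P1xP1}, \cref{lemma: 2a_obstruction}, and the ruling-swap character used for $(34,7)$. For example, it kills $(51,7)$ in bidegree $(4,8)$, because $r_3=r_7=8=2a$ while $r_{21}=0$.

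\emph{Known classifications.} You also need the existing classifications of trigonal and tetragonal Shimura curves for $a=3,4$.

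With only your tests, pairs such as $(6,157)$ in bidegree $(6,6)$ or $(34,7)$ in bidegree $(4,4)$ are never eliminated. So the claimed list is not established.

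\textbf{Part (1).} The low-gonality case matches the paper's \cref{lem:low-gonality-embedding}. For the three genus-$9$ pairs, the real content is the birationality check you leave open. The canonical-model route is not available, since no equations for these curves are known.
\begin{itemize}
\item For $(6,35)$ and $(10,21)$, the paper takes Klein four-subgroups $W_1,W_2$ of the Atkin--Lehner group with $g(X/W_i)=0$ and $W_1\cap W_2=1$. The two degree-$4$ functions then generate $\C(X)$ by Galois theory.
\item For $(14,17)$, the maps come from the hyperelliptic genus-$3$ quotients by $w_2$ and $w_{238}$. These composites are not Galois quotients, so the degree $e\mid 4$ onto the image must be excluded by hand. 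If $e=4$, the pencil would descend through $X/\langle w_2,w_{238}\rangle$, which has genus $1$. If $e=2$, $X$ would be hyperelliptic or bielliptic, which it is not.
\end{itemize}
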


For the eleven pairs in \cref{table: remaining_candidates_Shimura},
the existence of such an embedding remains unresolved;
the table records the possible bidegrees.

\renewcommand{\arraystretch}{1.15}
{
\begin{longtable}{|c|c|}
\caption{All $11$ pairs $(D,N)$ for which we remain unsure whether $X_0^D(N)$ embeds in $\P^1 \times \P^1$.}\label{table: remaining_candidates_Shimura} \\  \hline 
$(a,b)$ & $(D,N)$  \\ \hline \hline
$(3,3)$ & $(106,1)$\footnote{See \cref{remark: conjecture106}},$(118,1)$ \footnote{See \cref{remark: conjecture118}} \\ \hline
$(4,4)$ & $(38,5),(133,1),(145,1),(177,1),(226,1)$ \\ \hline
$(4,6)$ & $(217,1),(267,1),(382,1)$ \\ \hline
$(5,5)$ & $(394,1)$ \\ \hline 
\end{longtable}
}

The proofs consist of two main steps. We first reduce the search to a finite set using the relation between genus and gonality of $X_0(N)$ and $X_0^D(N)$. For the remaining modular candidates we either find a model in $\P^1 \times \P^1$ or prove that no such model exists. For the remainder of the Shimura curves, we either prove that a model in $\P^1 \times \P^1$ exists (without providing defining equations), or we demonstrate its non-existence. Unlike modular curves, Shimura curves have no cusps and no real points, and defining equations are correspondingly scarcer in the literature.
We also discuss a companion result about embeddings in another toric surface, namely $\P^2$. In the spirit of \cite{AALG23}, we also prove that

\begin{theorem}\label{theorem:main_theorem}
For $D> 1$ the Shimura curve $X_0^D(N)$ admits a smooth plane model in $\P^2$ if and only if the genus of $X_0^D(N)$ is at most one.
\end{theorem}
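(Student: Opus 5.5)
A smooth plane curve of degree $d$ has genus $g=(d-1)(d-2)/2$. The possible genera are therefore $0,1,3,6,10,15,\dots$. For $d\ge 4$ such a curve also has a rigid gonality: by Namba's theorem its gonality is exactly $d-1$. The ``if'' direction is immediate. A genus $0$ curve is a line or conic in $\P^2$, and a genus $1$ curve embeds as a smooth plane cubic via $|3P|$. All of this is geometric (over $\C$), so $X_0^D(N)$ of genus at most one has a smooth plane model. For the converse I assume $X=X_0^D(N)$, $D>1$, has genus $g\ge 2$ and a smooth plane model of degree $d\ge 4$. Then $g=(d-1)(d-2)/2\in\{3,6,10,\dots\}$ and $\gon_{\C}(X)=d-1$.

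\textbf{Step 1 (reduce to finitely many $(D,N)$).} I will use a lower bound on gonality that grows linearly in the genus. The Abramovich-type bound gives $\gon_{\C}(X_0^D(N)) \ge \frac{7}{800}\cdot\frac{\phi(D)\psi(N)}{12}$, and the genus satisfies $g \approx \frac{1}{12}\phi(D)\psi(N)$ up to elliptic-point corrections of size $O(2^{\omega(DN)})$. Combining these gives $\gon \ge c\,(g-1)$ for an explicit constant $c$. On the other hand, smooth plane curves have $\gon = d-1 \approx \sqrt{2g}$. So $c(g-1)\le \sqrt{2g}+1$, which bounds $g$ and hence $\phi(D)\psi(N)$. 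This leaves a finite, explicit list of pairs $(D,N)$ with $g\in\{3,6,10,\dots\}$ below the bound.

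\textbf{Step 2 (sieve the list).} For each surviving pair I will use structural obstructions. First, $X$ must have the right genus $g=(d-1)(d-2)/2$. Second, $\gon_{\C}(X)$ must equal $d-1$ exactly. Any Shimura curve known to be geometrically hyperelliptic (Guo--Yang) is excluded, since smooth plane curves with $d\ge4$ are never hyperelliptic. Likewise any curve with a map of degree $<d-1$ to $\P^1$ is excluded, for example via Atkin--Lehner quotients $X\to X/w$ of genus $0$ or $1$, or via a bielliptic or trigonal structure when $d\ge 5$.

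Automorphisms give a further obstruction. $X$ carries the Atkin--Lehner group $(\Z/2)^{\omega(DN)}$, and every automorphism of a smooth plane curve of degree $d\ge4$ extends to $PGL_3$. An involution of $\P^2$ fixes a line and a point. Its fixed points on $X$ therefore number $d$ or $d+2$, and the quotient genus is determined by $d$ (for $d=4$ the quotient has genus $1$). Ogg's formula gives the number of fixed points of $w_m$ from class numbers, and I will compare that count with $d$ or $d+2$. I also expect that $(\Z/2)^2\subset PGL_3$ is forced to act diagonally, with restricted fixed-point data. For genus $3$, the nonhyperelliptic case means every genus-$3$ non-hyperelliptic Shimura curve would be a smooth plane quartic. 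I therefore expect that there are none with $D>1$, with all genus-$3$ examples hyperelliptic by Guo--Yang / Ogg, and this must be checked against the list.

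\textbf{Main obstacle.} The hardest part will be the genus $3$ and $6$ candidates that are not ruled out by hyperellipticity. For these I will rely mainly on the Atkin--Lehner fixed-point counts versus the plane-curve constraint. In genus $6$, a smooth plane quintic has gonality $4$ and is not trigonal. A trigonal or bielliptic structure from a quotient would kill it, because smooth plane quintics are not bielliptic. I expect the sieve by Ogg's formula to eliminate everything.
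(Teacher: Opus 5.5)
\textbf{The fixed-point count in your sieve is wrong.} Your overall plan is the paper's route. You bound $d$ by playing the linear genus--gonality inequality against $\gon_{\C}(X)=d-1$, reduce to finitely many $(D,N)$ with $g=(d-1)(d-2)/2$, and compare Ogg's counts $r_m=\#\Fix(w_m)$ with what an involution of a smooth plane curve must have. The problem is that last constraint.

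An involution $\sigma$ of $\P^2$ fixes a line $L$ and one isolated point $P$. At a fixed point of $X$ on $L$, $d\sigma$ acts by $-1$ on $T_X$ and by $+1$ on $T_L$. So $X$ meets $L$ transversally in exactly $d$ points, and $P$ adds at most one more. Hence $r\in\{d,d+1\}$, not $\{d,d+2\}$. Since $r=2g+2-4g(X/\langle\sigma\rangle)$ is even, this forces $r=d+\frac{1-(-1)^d}{2}$, which is exactly the fixed-point count the paper imports for smooth plane curves.

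With your pair $\{d,d+2\}$, both values are odd when $d$ is odd. You would then conclude that smooth plane curves of odd degree carry no involutions, and discard every odd-$d$ candidate without computing anything. That conclusion is false: the swap $x\leftrightarrow y$ on $x^5+y^5+z^5=0$ fixes the five points on $x=y$ together with $[1:-1:0]$, six points in all. So that half of your sieve is unjustified. For even $d$, the congruence $r\equiv 2g+2\equiv d\pmod 4$ rules out $d+2$, so there your test happens to agree with the correct one. With the corrected target $r_m=d+\frac{1-(-1)^d}{2}$ for every $1<m\parallel DN$, what remains is exactly the paper's finite computation, which eliminates all $260$ candidates of genus at least $3$.

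\textbf{Two smaller points.}
\begin{enumerate}
\item Your Step 1 constant is needlessly weak. The Li--Yau/Abramovich bound for $X_0^D(N)$ is $\gon_{\C}\geq\frac{\lambda_1}{24}\phi(D)\psi(N)$, i.e.\ $\frac{7}{800}\phi(D)\psi(N)$ for $\lambda_1\geq\frac{21}{100}$, with no extra factor $\frac{1}{12}$. Combined with $\phi(D)\psi(N)\geq 12(g-1)$ it gives $\gon_{\C}\geq\frac{\lambda_1}{2}(g-1)$, hence $d\leq 18$ with the Kim--Sarnak value. Your version still gives finiteness, but only $d\lesssim 230$, a vastly larger search.
\item The hyperelliptic, bielliptic, trigonal and Klein-four-group checks are not needed; the paper's fixed-point test alone eliminates every candidate. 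Once $r_m$ is forced, so is $g(X/\langle w_m\rangle)=\frac{2g+2-r_m}{4}$. For example, a genus-$0$ quotient would need $r_m=2g+2>d+1$. Your genus-$3$ shortcut is still a valid alternative for $d=4$: a non-hyperelliptic genus-$3$ curve is automatically a smooth plane quartic, so checking against Guo--Yang suffices.
\end{enumerate}
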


\subsection*{Acknowledgements}
The author would like to thank John Voight for suggesting the project, and is grateful to Gebhard Martin, Freddy Saia, and John Voight for helpful conversations. 

\subsection*{AI Statement} Some of the propostions and lemma in this paper were produced in conversation with ChatGPT 5.6 (\cref{lem:fixed-point-obstruction-P1xP1}, \cref{lemma: 2a_obstruction}, \cref{lem:low-gonality-embedding}, \cref{prop: 14-17)}), as well as \cref{remark: conjecture118}. ChatGPT 5.6 was also used to produce the defining equations in \cref{tab: P1twice_models}, as well as to proofread the paper. The author takes full responsibility for the paper, the code, and their mathematical correctness.

\section{Gonality bounds}

The following bound was first given by Abramovich and later improved by Kim--Sarnak:

\begin{theorem}{\cite[Theorem 1.1]{Abramovich96}, \cite[Appendix 2]{KS03}}
\label{theorem: index_gonality_inequality}
Let $\Gamma \subset \text{PSL}_2(\Z)$ be a congruence subgroup, and $X_{\Gamma}$ the corresponding modular curve. Then
\[
\gon_{\C}(X_{\Gamma}) \ge \frac{1}{24}\cdot\frac{975}{4096}[\text{PSL}_2(\Z) : \Gamma].
\]
\end{theorem}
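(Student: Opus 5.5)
The plan is the standard argument of Abramovich, which compares the bottom of the spectrum of the hyperbolic Laplacian on $X_\Gamma$ with the degree of a map to $\P^1$ via a Li--Yau/Hersch type test-function argument. Let $i = [\text{PSL}_2(\Z):\Gamma]$. Equip $Y_\Gamma = \Gamma\backslash\mathfrak{H}$ with the hyperbolic metric. Since a fundamental domain of $\text{PSL}_2(\Z)$ has area $\pi/3$, we get $\operatorname{Area}(Y_\Gamma) = \frac{\pi}{3} i$. Let $\lambda_1$ be the smallest nonzero eigenvalue of the Laplacian on $L^2(Y_\Gamma)$, or the bottom $1/4$ of the continuous spectrum if that is smaller. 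Put $\lambda = \min(\lambda_1, 1/4)$. The first key step is the geometric inequality
\[
\lambda \cdot \operatorname{Area}(Y_\Gamma) \le 8\pi d
\]
for any nonconstant holomorphic map $f\colon X_\Gamma \to \P^1$ of degree $d$.

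To prove this inequality, I would compose $f$ with the identification $\P^1 \cong S^2 \subset \mathbb{R}^3$. By Hersch's balancing trick, I would precompose the sphere with a Möbius transformation so that the three coordinate functions $x_1,x_2,x_3$ of $f$ each have mean zero with respect to the hyperbolic area measure. This is a Brouwer fixed-point argument on the space of conformal automorphisms of $S^2$, and it only needs the measure to be finite.

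Each $x_j$ is then orthogonal to constants and lies in $H^1$, so it is an admissible test function in the Rayleigh quotient. Two facts finish the estimate. First, $\sum x_j^2 = 1$. Second, the Dirichlet energy is conformally invariant in dimension two, so $\sum_j \int |\nabla x_j|^2$ equals $2$ times the area of $S^2$ counted with multiplicity $d$, namely $8\pi d$. Summing the three Rayleigh quotient inequalities gives $\lambda \operatorname{Area} \le 8\pi d$. The cusps cause no trouble: the $x_j$ are smooth on the compactification $X_\Gamma$, hence bounded with finite energy. The only change from the compact case is that the min-max characterization involves $\min(\lambda_1,1/4)$ rather than $\lambda_1$.

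Substituting the area gives
\[
d \ge \frac{\lambda}{8\pi}\cdot\frac{\pi}{3} i = \frac{\lambda}{24} i.
\]
The second step is the arithmetic input. For congruence subgroups, Selberg's bound, improved by Kim--Sarnak via functoriality of $\operatorname{Sym}^4$, gives $\lambda_1 \ge \frac14 - \left(\frac{7}{64}\right)^2 = \frac{975}{4096}$. This is below $1/4$, so $\lambda \ge 975/4096$. Taking $d = \gon_{\C}(X_\Gamma)$ then yields the stated bound.

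The main obstacle is the spectral bound itself. It is a deep automorphic result, and I would simply cite \cite{KS03} for it. On the geometric side, the delicate point is justifying the Rayleigh quotient and balancing argument on the noncompact surface with continuous spectrum. I would handle that by working with finite-energy $H^1$ functions and the variational characterization of the bottom of the spectrum of the complement of constants.
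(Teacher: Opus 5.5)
Your proposal is correct and follows the same route as the cited sources; the paper itself gives no proof and only quotes them. Abramovich proves $\gon_{\C}(X_\Gamma)\ge \frac{\lambda_1}{24}[\operatorname{PSL}_2(\Z):\Gamma]$ by the Yang--Yau/Hersch test-function argument you sketch, and the stated constant comes from inserting the Kim--Sarnak bound $\lambda_1\ge \frac14-\left(\frac{7}{64}\right)^2=\frac{975}{4096}$. One small precision: Hersch's balancing lemma needs the pushed-forward area measure on $S^2$ to have no atoms, not merely to be finite. This holds here, since $f$ is nonconstant and the hyperbolic measure is absolutely continuous on $X_\Gamma$.
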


To obtain an upper bound on the genus of $X_0^D(N)$ we use the following weaker result which works for both modular and Shimura curves:
\begin{theorem}{\cite[Theorem 1.1]{Abramovich96}, \cite[Appendix 2]{KS03}} \label{theorem: genus_gonality_inequality}
There is an inequality:
\[
\frac{1}{2} \cdot \frac{975}{4096} \left( g(X_0^D(N))-1\right) \le \gon_{\C} (X_0^D(N)).
\]   
\end{theorem}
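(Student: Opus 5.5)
The plan is to derive the inequality from the Li--Yau conformal-area bound, the Kim--Sarnak spectral gap (transferred to Shimura curves via Jacquet--Langlands), and Gauss--Bonnet; this is essentially Abramovich's argument. Write $X = X_0^D(N)$, $g = g(X)$ and $\lambda_1 = \lambda_1(X)$. Uniformize $X$ as $\Gamma\backslash\mathcal H$, where $\Gamma = \Gamma_0^D(N)$ when $D>1$ and $\Gamma = \Gamma_0(N)$ when $D=1$. Give $X$ the hyperbolic metric of curvature $-1$, with cone points at the elliptic points and cusps when $D=1$. The argument splits into three steps: a spectral bound for the gonality, a lower bound on $\lambda_1$, and a lower bound on the hyperbolic area in terms of $g$.

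\emph{Step 1 (gonality versus $\lambda_1$).} Let $f\colon X \to \P^1$ be a nonconstant map of degree $d = \gon_{\C}(X)$. Compose $f$ with conformal automorphisms of the round sphere $S^2$ and use the Hersch balancing trick: the standard centre-of-mass argument lets us assume that the three coordinate functions of $f$ have mean zero on $X$. Each coordinate is then an admissible test function in the Rayleigh quotient for $\lambda_1$. Summing over the three coordinates and using the conformal invariance of the Dirichlet energy gives the Li--Yau inequality
\[
\lambda_1 \cdot \operatorname{Area}(X) \le 8\pi d.
\]
For the cusps and orbifold points one notes that they are finitely many points of capacity zero, so the test functions remain in $H^1$ and the inequality persists.

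\emph{Step 2 (spectral gap).} For congruence subgroups of $\mathrm{PSL}_2(\Z)$, Kim--Sarnak give $\lambda_1 \ge \frac{975}{4096}$. For $D>1$, the Jacquet--Langlands correspondence identifies the Laplace eigenvalues on $X_0^D(N)$ with eigenvalues of Maass newforms on $\Gamma_0(DN)$. Hence the same bound $\lambda_1 \ge \frac{975}{4096}$ holds. Here $X$ is compact, so there is no continuous spectrum to worry about.

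\emph{Step 3 (area).} Gauss--Bonnet gives
\[
\operatorname{Area}(X) = 2\pi\Big(2g-2+\sum_i \big(1-\tfrac1{e_i}\big) + c\Big) \ge 4\pi(g-1),
\]
where the $e_i$ are the orders of the elliptic points and $c$ is the number of cusps. Combining the three steps yields
\[
\tfrac{975}{4096}\cdot 4\pi(g-1) \le 8\pi\,\gon_{\C}(X),
\]
which is exactly the stated inequality. The main obstacle is Step 2 for $D>1$: one must check that Jacquet--Langlands really transports the Kim--Sarnak bound to the compact quotient, or else simply cite it as in \cite{KS03}. The only other point needing care is the orbifold and cusp version of Li--Yau in Step 1.
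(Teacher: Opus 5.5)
Your proposal is correct and is essentially the argument behind the citation. The paper gives no proof of its own: Abramovich's theorem is the Yang--Yau/Li--Yau bound $\lambda_1\cdot\operatorname{Area}(X)\le 8\pi\gon_{\C}(X)$ combined with the Kim--Sarnak gap $\lambda_1\ge 975/4096$, transported to $D>1$ by Jacquet--Langlands, and your Gauss--Bonnet step $\operatorname{Area}(X)\ge 4\pi(g-1)$ is exactly what converts his index form $\gon_{\C}\ge\frac{\lambda_1}{24}[\mathrm{PSL}_2(\Z):\Gamma]$ into the genus form stated here (for $D=1$, also note that the continuous spectrum starts at $1/4>975/4096$, so your Rayleigh-quotient bound holds on all mean-zero functions).
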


By $W_0(D;N)$ we denote the full group of Atkin--Lehner involutions. We will use the following notation throughout the paper:

\[
X_0^D(N)^+ \coloneq X_0^D(N)/\langle w_{D\cdot N} \rangle, \quad X_0^D(N)^* \coloneq X_0^D(N)/W_0(D;N).
\]

To determine the properties of $X_0^D(N)$, one may appeal to its quotients by involutions, for which knowing the number of fixed points is necessary and sufficient to compute the genus of the quotient:

\begin{proposition}[Riemann--Hurwitz]
Let $\sigma$ be any involution on a smooth projective curve $C$ over an algebraically closed field $k$ of characteristic $0$, and let $\#\Fix_C(\sigma)$ denote the number of fixed points of $\sigma$. Then, we have the following formula for the genus of the quotient:
$$g(C/\langle \sigma \rangle) = \frac{1}{4}(2g(C)+2- \#\Fix_C(\sigma)).$$
\end{proposition}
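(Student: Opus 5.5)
The plan is to compare the Euler characteristic of $C$ with that of the quotient $C' \coloneq C/\langle \sigma\rangle$ via the Riemann--Hurwitz formula for the quotient map $\pi\colon C \to C'$. Since $k$ is algebraically closed of characteristic $0$, $\pi$ is a finite separable morphism of degree $2$ between smooth projective curves. (If $\sigma$ is the identity the quotient is $C$ itself, so I assume $\sigma$ is a genuine involution, of order exactly $2$.) The quotient $C'$ is again a smooth projective curve, because the quotient of a smooth curve by a finite group is normal of dimension one.

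First I identify the ramification. A point $P\in C$ is ramified for $\pi$ exactly when its stabilizer in $\langle\sigma\rangle$ is nontrivial, that is, when $\sigma(P)=P$. In that case the ramification index is $e_P = 2$. Because the characteristic is $0$ (in particular not $2$), the ramification is tame, so the different exponent at $P$ is $e_P - 1 = 1$. At every non-fixed point, $e_P=1$ and the contribution is $0$. The fixed points are finite in number, since $\sigma\neq \id$ and a nontrivial automorphism of a curve has isolated fixed points.

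Next I apply the Hurwitz formula:
\[
2g(C)-2 = 2\bigl(2g(C')-2\bigr) + \sum_{P\in C}(e_P-1) = 4g(C') - 4 + \#\Fix_C(\sigma).
\]
Solving for $g(C')$ gives $4g(C') = 2g(C) + 2 - \#\Fix_C(\sigma)$, which is the claimed formula.

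The only step needing care is the tameness and ramification-index claim, namely that each fixed point contributes exactly $1$. I would justify it by passing to a local uniformizer $t$ at a fixed point $P$. Since $\sigma$ acts on the cotangent space $\mathfrak m_P/\mathfrak m_P^2$ by a scalar $\pm1$, we can linearize: replace $t$ by $t-\sigma^*t$ to get $\sigma^*t=-t$. The linearization fails only in characteristic $2$, and the scalar cannot be $+1$, since otherwise $\sigma$ would be the identity near $P$ and hence everywhere. Then $t^2$ is a uniformizer at $\pi(P)$, which gives $e_P=2$ with tame ramification.
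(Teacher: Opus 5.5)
Your proposal is correct and matches what the paper intends. The paper gives no separate proof: it simply invokes the Riemann--Hurwitz formula for the degree-two quotient $C\to C/\langle\sigma\rangle$, with each fixed point contributing $e_P-1=1$, which is exactly the computation you carry out. As a minor simplification, $e_P=2$ at a fixed point already follows because the fiber of $\pi$ over $\pi(P)$ is the orbit $\{P\}$ and $\sum_{Q\mapsto\pi(P)}e_Q=\deg\pi=2$, so your tangent-space linearization, including the briefly justified claim that the eigenvalue cannot be $+1$, is not needed.
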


Shimura curves $X_0^D(N)$ have a rich automorphism group due to their Atkin--Lehner involutions. Ogg proved that only certain CM points can be fixed by these involutions, while also giving a formula to compute their number. Let $\mathcal{O}_N$ be an Eichler order of level $N$ in the indefinite quaternion algebra over $\Q$ of discriminant $D$. For each prime $p$, embeddings into $(\mathcal{O}_N)_p$ are counted up to conjugation by $(\mathcal{O}_N)_p^{\times}$.

\begin{theorem}\cite[p. 283, Theorem 2]{Ogg83}\label{thm: Ogg_fixed_pts}
Let $D>1$ and $m>1$ with $m \parallel DN$. The fixed points of the Atkin--Lehner involution $w_m$ acting on $X_0^D(N)$ are points with \textnormal{CM} by the following imaginary quadratic orders:
\[   
R = 
     \begin{cases}
       \Z[i] \text{ and } \Z[\sqrt{-2}] &\quad\textnormal{if } m = 2, \\
       \Z\left[\frac{1+\sqrt{-m}}{2}\right]\text{and } \Z[\sqrt{-m}] &\quad\textnormal{if } m \equiv 3 \;(\bmod \; 4), \\
       \Z[\sqrt{-m}] &\quad\textnormal{ otherwise}.\\
     \end{cases}
\]

For each of the orders $R$ listed for $m$ in this theorem, the count of $R$-CM points fixed by $w_m$ is given explicitly by Ogg as a product
\[
h(R) \prod_{\substack{p \mid \frac{DN}{m} \\ \text{prime}}} \nu_p(R,\mathcal{O}_N),
\]
where $h(R)$ is the class number of the order $R$ and $\nu_p(R,\mathcal{O}_N)$ is the number of inequivalent optimal embeddings of the localization $R_p$ of $R$ at $p$ into the localization $(\mathcal{O}_N)_p$  of $\mathcal{O}_N$ at $p$.
\end{theorem}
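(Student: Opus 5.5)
The plan is to translate the fixed points of $w_m$ into optimal embeddings of imaginary quadratic orders into $\mathcal{O}_N$, and then count those embeddings with Eichler's theory. Write $X_0^D(N)=\Gamma\backslash\mathfrak{H}$ with $\Gamma=\mathcal{O}_N^1$. Realise $w_m$ as an element $\alpha\in\mathcal{O}_N$ in the normalizer of $\mathcal{O}_N$ with reduced norm $m$. Such an $\alpha$ is well defined modulo $\mathcal{O}_N^\times\mathbb{Q}^\times$, and $\alpha^2\in m\,\mathcal{O}_N^\times$ because $w_m$ is an involution. A point $\Gamma z$ is fixed by $w_m$ exactly when $\gamma\alpha z=z$ for some $\gamma\in\Gamma$. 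Replacing $\alpha$ by $\gamma\alpha$, we may take $\alpha$ itself to fix $z\in\mathfrak{H}$. Then $\alpha$ is elliptic, $K=\mathbb{Q}(\alpha)$ is imaginary quadratic, and $z$ is the CM point attached to the embedding $K\hookrightarrow B$.

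\textbf{Step 1: possible orders.} The element $\alpha$ has norm $m$, trace $t$ with $t^2<4m$, and satisfies $\alpha^2=mu$ for some unit $u$. Write $\alpha^2=t\alpha-m$. If $t\neq 0$, the relation $\alpha^2\in m\,\mathcal{O}_N^\times$ forces $m\mid t^2$ and $m\mid t$. Combined with $t^2<4m$ this gives $t^2=m$, which is impossible for squarefree $m>1$, except in the degenerate cases $m\in\{2,3\}$. Here I would check directly whether $\alpha/(1\pm\alpha)$-type elements are units: $m=2$, $t=\pm2$ gives $\mathbb{Z}[i]$, and $m=3$, $t=\pm3$ gives the $\mathbb{Z}[\zeta_3]$ case. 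Otherwise $t=0$ and $\alpha=\sqrt{-m}$. The optimally embedded order $R=K\cap\mathcal{O}_N$ then contains $\mathbb{Z}[\sqrt{-m}]$ with index prime to what is forced. It is either $\mathbb{Z}[\sqrt{-m}]$ or, when $m\equiv 3\pmod 4$, possibly $\mathbb{Z}[\tfrac{1+\sqrt{-m}}{2}]$. This reproduces the list in the statement, with the $m=2$ case adding $\mathbb{Z}[i]$ next to $\mathbb{Z}[\sqrt{-2}]$.

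\textbf{Step 2: reverse direction.} Conversely, any optimal embedding $\phi:R\to\mathcal{O}_N$ with $R$ in the list produces a fixed point of $w_m$. The element $\phi(\sqrt{-m})$ (respectively $\phi(1+i)$ when $m=2$) has norm $m$. It normalizes $\mathcal{O}_N$ because locally at $p\mid m$ it generates the unique two-sided ideal of norm $p^{v_p}$, and at $p\nmid m$ it is a unit. It therefore represents $w_m$ and fixes the CM point of $\phi$. So the fixed points of $w_m$ with CM by $R$ are in bijection with $\Gamma$-conjugacy classes of optimal embeddings $R\hookrightarrow\mathcal{O}_N$, modulo the identification $\phi\sim\bar\phi$ coming from $w_m$ itself. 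Equivalently, they correspond to $\mathcal{O}_N^\times$-conjugacy classes, after checking the sign/orientation bookkeeping.

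\textbf{Step 3: counting.} Apply Eichler's formula: the number of such classes is $h(R)\prod_{p\mid DN}\nu_p(R,\mathcal{O}_N)$ up to the factor accounting for the normalizer quotient. At primes $p\mid m$, the prime $p$ ramifies in $R$ (it divides the discriminant). The local embeddings there, whether two or one, are permuted transitively by the local Atkin--Lehner element. The quotient by $w_m$ therefore removes exactly those local factors, leaving $h(R)\prod_{p\mid DN/m}\nu_p(R,\mathcal{O}_N)$.

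\textbf{Main obstacle.} The delicate step is the bookkeeping in Step 3. One must verify that quotienting by the action of $w_m$ (and fixing orientation, i.e.\ passing from $\mathcal{O}_N^\times$ to $\mathcal{O}_N^1$) cancels precisely the local factors at $p\mid m$ and introduces no extra factor of $2$. One must also treat $m=2,3$ and $m=3\pmod 4$ separately, so that the two candidate orders are not double-counted. I would first try to settle this by working adelically, with the idelic description of optimal embeddings, where the local transitivity at $p\mid m$ is transparent.
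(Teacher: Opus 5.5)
\paragraph{Verdict.} The paper does not prove this statement; it quotes it from Ogg. Your Steps 1--2 follow Ogg's strategy: represent $w_m$ by an elliptic element of norm $m$ in the normalizer, then pass to optimal embeddings. That part is sound, up to one slip: from $m\mid t$ and $t\neq 0$ you get $m^2\le t^2<4m$, hence $m\le 3$, not ``$t^2=m$''. There are, however, two genuine problems. The first is in the counting step; the second is a tacit assumption that $m$ is squarefree.

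\paragraph{Step 3 uses the wrong mechanism.} Assume $m$ is squarefree, as you do. Step 2 says that $\alpha=\phi(\sqrt{-m})$ (respectively $\phi(1+i)$) represents $w_m$ for every optimal embedding $\phi$ of a listed order $R$. Hence every CM point with CM by $R$ is $w_m$-fixed. Moreover $w_m$ acts trivially on $\phi$, because $\alpha$ commutes with $\phi(K)$. So there is no quotient by $w_m$ to take.

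The identification $\phi\sim\bar\phi$ also has nothing to do with $w_m$. It is the orientation issue, and it is absorbed by units of reduced norm $-1$. This is exactly why $\mathcal{O}_N^1$-classes of normalized embeddings coincide with $\mathcal{O}_N^\times$-classes of all embeddings.

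The fixed points with CM by $R$ are therefore counted by Eichler's formula $h(R)\prod_{p\mid DN}\nu_p(R,\mathcal{O}_N)$, with no correction factor. The missing input is that $\nu_p(R,\mathcal{O}_N)=1$ for every $p\mid m$:
\begin{itemize}
\item such a $p$ ramifies in $K$;
\item $R_p$ is maximal, since the conductor of $R$ is $1$ or $2$, and $2$ divides it only when $m\equiv 3\pmod 4$, in which case $2\nmid m$;
\item hence $\nu_p=1-\bigl(\tfrac{K}{p}\bigr)=1$ for $p\mid D$, and $\nu_p=1+\bigl(\tfrac{K}{p}\bigr)=1$ for $p\parallel N$.
\end{itemize}
Your picture of ``two local embeddings swapped by the local Atkin--Lehner element'' is what happens at primes unramified in $K$ (split $p\parallel N$, inert $p\mid D$), not at $p\mid m$. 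Had a local factor at some $p\mid m$ really been $2$, your Step 3 would contradict your own Step 2.

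\paragraph{The squarefree assumption.} The hypothesis $m\parallel DN$ only gives $\gcd(m,DN/m)=1$, and $N$ may have square factors; the paper uses $N=32$. Suppose $p^k\parallel m$ with $k\ge 2$.
\begin{itemize}
\item The implication $m\mid t^2\Rightarrow m\mid t$ fails. The conclusion $m\mid t$ still holds, for a different reason: $\alpha$ generates $I_m$, and $I_m\otimes\Z_p=\begin{pmatrix} p^k\Z_p&\Z_p\\ p^k\Z_p&p^k\Z_p\end{pmatrix}$ has diagonal entries divisible by $p^k$.
\item The other orders containing $\Z[\sqrt{-m}]$ are excluded because $\alpha/\ell\notin\mathcal{O}_N$ for every prime $\ell$, since $\alpha\mathcal{O}_N=I_m\not\subseteq \ell\mathcal{O}_N$. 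Writing $R=\Z[\beta]$ and $\sqrt{-m}=a+b\beta$, this forces $\gcd(a,b)=1$, hence $b\mid 2$. This replaces your vague ``index prime to what is forced''.
\item More seriously, Step 2 is false. Take $4\parallel N$ and $m=4$. The element $x=\begin{pmatrix}2&1\\-8&-2\end{pmatrix}\in\begin{pmatrix}\Z_2&\Z_2\\4\Z_2&\Z_2\end{pmatrix}$ satisfies $x^2=-4$ and $x/2\notin\mathcal{O}_2$, so it gives an optimal embedding of $\Z_2[2i]$. Yet $x\notin I_4\otimes\Z_2$, so $x$ does not normalize the local order.
\end{itemize}
Consequently, not every CM point by $\Z[\sqrt{-m}]$ is fixed by $w_m$, and $\nu_p(R,\mathcal{O}_N)$ can exceed $1$ at $p\mid m$. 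What you actually need is a local statement: at each $p\mid m$, exactly one $(\mathcal{O}_N)_p^\times$-class of optimal embeddings sends $\sqrt{-m}$ to a generator of $I_m\otimes\Z_p$. For squarefree $m$ this is trivial, because there is only one class and it qualifies. This local fact is the real reason the product runs only over $p\mid DN/m$.
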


The following result is fundamental in determining the relation between the gonality of a curve and those of its quotients:

\begin{theorem}[Castelnuovo--Severi]\label{theorem: CS}
Let $F$ be a perfect field and let $C$, $C_1$, and $C_2$ be curves over $F$. Let
\[
\pi_1\colon C \to C_1\qquad\text{and} \qquad \pi_2\colon C \to C_2
\]
be non-constant morphisms defined over $F$. Then either
\[
g(C) \leq \textnormal{deg}(\pi_1)\cdot g(C_1) + \textnormal{deg}(\pi_2)\cdot g(C_2) + (\textnormal{deg}(\pi_1)-1)(\textnormal{deg}(\pi_2)-1),
\]
or there exists a curve $C'$ over $F$ and a morphism $C \to C'$ over $F$ of degree greater than $1$ through which both $\pi_1$ and $\pi_2$ factor. 
\end{theorem}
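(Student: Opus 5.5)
The classical way to prove the Castelnuovo--Severi inequality is to compare the intersection form on the surface $C_1\times C_2$ with the Hodge index theorem. Put $d_i=\deg(\pi_i)$ and $g_i=g(C_i)$. We may pass to $\bar F$: if both maps factor through a common $C'$ of degree $>1$ over $\bar F$, we descend it to $F$ at the end. For the descent, the maximal common factorization corresponds to the compositum of the function fields $\pi_1^*F(C_1)\cdot \pi_2^*F(C_2)\subseteq F(C)$. This compositum is Galois-stable and $F$ is perfect, so the intermediate field, and hence $C'$, is defined over $F$.

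Now assume the two maps do not factor through a common curve of degree $>1$. Then the induced morphism $\phi=(\pi_1,\pi_2)\colon C\to C_1\times C_2$ is birational onto its image, a curve $Z$. We take $S=C_1\times C_2$, and write $F_1=\{pt\}\times C_2$ and $F_2=C_1\times\{pt\}$ for the fibre classes, so that $F_1^2=F_2^2=0$ and $F_1\cdot F_2=1$. The intersection numbers of $Z$ with these are $Z\cdot F_1=d_1$ and $Z\cdot F_2=d_2$. Set $Z'=Z-d_2F_1-d_1F_2$. Then $Z'\cdot F_1=Z'\cdot F_2=0$, so $Z'$ is orthogonal to the ample class $F_1+F_2$. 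By the Hodge index theorem $Z'^2\le 0$, which expands to $Z^2\le 2d_1d_2$.

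Next comes adjunction. The canonical class of $S$ is $K_S=(2g_1-2)F_2+(2g_2-2)F_1$. So the arithmetic genus of $Z$ satisfies
\[
2p_a(Z)-2=Z^2+K_S\cdot Z\le 2d_1d_2+d_1(2g_1-2)+d_2(2g_2-2).
\]
Since $C$ is the normalization of $Z$, we have $g(C)\le p_a(Z)$. This gives $g(C)\le d_1g_1+d_2g_2+d_1d_2-d_1-d_2+1$, and the right-hand side equals $d_1g_1+d_2g_2+(d_1-1)(d_2-1)$, which is the claimed inequality. One should double-check which coefficient attaches to which fibre in the expression for $K_S$; this determines whether $g_1$ is multiplied by $d_1$ or by $d_2$.

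The main obstacle is the birationality step. I would prove it as follows. If $\phi$ has degree $e>1$ onto $Z$, then the normalization $\tilde Z$ of $Z$ carries a morphism $C\to\tilde Z$ of degree $e$. Both $\pi_i$ factor through $\tilde Z$ via the projections $Z\to C_i$, which contradicts the hypothesis. Equivalently, in terms of function fields, $F(Z)$ is the compositum above, and it equals $F(C)$ exactly when there is no common factorization. Apart from this step, the argument needs only the Hodge index theorem and adjunction on a product of curves. Both are standard, so the remaining computation is routine.
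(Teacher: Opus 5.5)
The paper gives no proof of this statement. It quotes it as the classical Castelnuovo--Severi inequality and only applies it, so there is nothing internal to compare against. Your argument is the standard geometric proof: Hodge index on $C_1\times C_2$ gives $Z^2\le 2(Z\cdot F_1)(Z\cdot F_2)$, and adjunction then bounds $p_a(Z)$. It is correct except for one slip, which you should fix rather than leave as a caveat.

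With your convention $F_1=\{\mathrm{pt}\}\times C_2$, the class $F_1$ is a fibre of the first projection $p_1$. Hence $K_S=p_1^*K_{C_1}+p_2^*K_{C_2}\equiv(2g_1-2)F_1+(2g_2-2)F_2$, and the formula you wrote has the two fibres swapped. Your displayed inequality already uses the correct value $K_S\cdot Z=(2g_1-2)d_1+(2g_2-2)d_2$. Taken literally, however, your stated $K_S$ gives the bound $d_2g_1+d_1g_2+(d_1-1)(d_2-1)$, and that bound is false. For instance, take $E$ elliptic and a smooth curve in the class $3F_1+2F_2$ on $\mathbb{P}^1\times E$. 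It has $d_1=2$, $d_2=3$ and genus $5$. The swapped bound gives $4$, while the correct bound gives exactly $5$. So the pairing $g_i\leftrightarrow d_i$ is forced, and only the formula for $K_S$ needs correcting.

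The descent step can be stated more cleanly. Work from the outset with $L=F(C_1)F(C_2)\subseteq F(C)$ over $F$.
\begin{itemize}
\item Since $C$ is geometrically integral, $F(C)/F$ is regular. Hence $\bar F L=\bar F(C_1)\bar F(C_2)$ and $[\bar F(C):\bar F L]=[F(C):L]$.
\item Since $F$ is perfect, all genera are unchanged by base change to $\bar F$.
\end{itemize}
So either $L=F(C)$, and your surface argument applies over $\bar F$, or $L\subsetneq F(C)$, and the curve with function field $L$ is the required $C'$ over $F$. No separate Galois-stability argument is needed.

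The purely function-field proof, as in Stichtenoth's book, is a Riemann--Roch dimension count inside the compositum. It works directly over $F$ and needs neither surfaces nor the passage to $\bar F$. Your route is shorter once the Hodge index theorem and adjunction are available. It also reads the right-hand side as $p_a$ of the image curve in $C_1\times C_2$. Equality therefore forces that image to be smooth and numerically equivalent to $d_2F_1+d_1F_2$.
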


To rule out that a curve embeds in $\P^1 \times \P^1$ we can use the following consequences of the Castelnuovo--Severi inequality:

\begin{proposition}
Let $C/\C$ be a curve of genus $(a-1)(b-1)$ with $a \le b$ that admits an involution $\sigma$. Assume 
\[
    g(C) > 2 g(C/\langle \sigma \rangle) + (b-1).
\]
    
Then $C$ cannot embed in $\P^1 \times \P^1$ with bidegree $(a,b)$.    
\end{proposition}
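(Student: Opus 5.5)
Suppose, for contradiction, that $C$ embeds in $\P^1\times\P^1$ as a smooth curve of bidegree $(a,b)$. The genus hypothesis $g(C)=(a-1)(b-1)$ is just the adjunction genus, so such an embedding is not excluded a priori. The two projections restrict to morphisms
\[
\pi_1\colon C\to\P^1,\qquad \pi_2\colon C\to\P^1,
\]
of degrees $b$ and $a$ respectively; which projection gets which degree depends on convention, but the argument is symmetric. Let $\pi_\sigma\colon C\to C/\langle\sigma\rangle$ be the quotient map, which has degree $2$. The plan is to apply Castelnuovo--Severi (\cref{theorem: CS}) to the pair consisting of $\pi_\sigma$ and the projection of degree $b$. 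Since $b\ge a$, this is the choice for which the cross term $(b-1)(2-1)=b-1$ matches the hypothesis exactly.

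Castelnuovo--Severi with $g(\P^1)=0$ gives: either
\[
g(C)\le 2\,g(C/\langle\sigma\rangle)+b\cdot 0+(2-1)(b-1)=2g(C/\langle\sigma\rangle)+(b-1),
\]
or both maps factor through a common morphism $C\to C'$ of degree $>1$. The first alternative contradicts the hypothesis. So $\pi_\sigma$ factors through $C\to C'$ of degree $\ge 2$. Because $\deg\pi_\sigma=2$, this forces $C'\cong C/\langle\sigma\rangle$. Hence the degree-$b$ projection factors through the quotient, that is, $\pi\circ\sigma=\pi$.

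The main step is to turn this $\sigma$-invariance into a contradiction with the embedding. I would show that the other projection $\pi'$ (degree $a$) is also $\sigma$-invariant, so that $(\pi,\pi')=(\pi,\pi')\circ\sigma$. Then the embedding identifies each point with its image under $\sigma$, so it is not injective, which is a contradiction. To get invariance of $\pi'$, apply the same Castelnuovo--Severi argument to $\pi_\sigma$ and $\pi'$. The bound becomes
\[
g(C)\le 2g(C/\langle\sigma\rangle)+(a-1)\le 2g(C/\langle\sigma\rangle)+(b-1),
\]
where the second inequality uses $a\le b$. This is again excluded by the hypothesis, so $\pi'$ also factors through $\pi_\sigma$. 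Both projections are therefore $\sigma$-invariant, and the map $C\to\P^1\times\P^1$ factors through the degree-$2$ quotient. A closed immersion cannot factor through a map of degree $2$, so no embedding of bidegree $(a,b)$ exists.

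I expect no serious obstacle here. The only care needed is to note that the degree-$2$ quotient admits no intermediate curve, so the common factor must be the quotient itself. One should also check that Castelnuovo--Severi applies over $\C$, which is perfect.
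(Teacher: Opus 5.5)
Your proof is correct and follows essentially the same route as the paper. You apply Castelnuovo--Severi to the degree-two quotient paired with each ruling, using $a\le b$ to handle the degree-$a$ projection. This forces both projections to factor through $C\to C/\langle\sigma\rangle$, so the product map cannot be an embedding; your remark that a degree-two map has no nontrivial intermediate factorization makes explicit what the paper compresses into ``$C\to C/\langle\sigma\rangle$ is a Galois cover.''
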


\begin{proof}
Assume for a contradiction that $C$ embeds in $\P^1 \times \P^1$ with bidegree $(a,b)$. We know that 

\[
    g(C) > b\cdot 0 + 2 g(C/\langle \sigma \rangle) + (b-1)(2-1) \ge a \cdot 0 + 2 g(C/\langle \sigma \rangle) + (a-1)(2-1),
\]
hence by the Castelnuovo--Severi inequality we know that the $a$-gonal map and $\sigma$ factor through a map. Moreover, the $b$-gonal map and $\sigma$ factor through a map. But since $C \mapsto C/\langle \sigma \rangle$ is a Galois cover, it follows that both the $a$-gonal and the $b$-gonal map factor through the map $C \mapsto C/\langle \sigma \rangle$, contradicting the fact that these two maps are independent.

\end{proof}

\begin{proposition}
Let $C/\C$ be a curve of genus $(a-1)(b-1)$ with $a \le b$ that admits an involution $\sigma$. Assume that either $a$ is odd or $b$ is odd. Let $d \coloneq a$ if $a$ is odd, otherwise let $d \coloneq b$. Assume moreover that  
\[
    g(C) > 2 g(C/\langle \sigma \rangle) + (d-1).
\]
    
Then $C$ cannot embed in $\P^1 \times \P^1$ with bidegree $(a,b)$.    
\end{proposition}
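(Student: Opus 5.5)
We argue by contradiction, in the same way as the previous proposition, but we use only the single projection of odd degree $d$. Suppose $C$ embeds in $\P^1 \times \P^1$ with bidegree $(a,b)$. The two projections give maps $\pi_a\colon C \to \P^1$ of degree $a$ and $\pi_b\colon C \to \P^1$ of degree $b$. Let $\pi_d$ be whichever of these has degree $d$, so $d$ is odd. We apply the Castelnuovo--Severi inequality (\cref{theorem: CS}) to $\pi_d$ and the quotient map $q\colon C \to C/\langle \sigma\rangle$, which has degree $2$. The bound reads
\[
g(C) \le d\cdot 0 + 2\, g(C/\langle\sigma\rangle) + (d-1)(2-1),
\]
and the hypothesis says exactly that this fails. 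Hence there is a curve $C'$ and a morphism $h\colon C \to C'$ of degree greater than $1$ through which both $\pi_d$ and $q$ factor.

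Since $q$ has degree $2$ and factors through $h$, we get $\deg h \mid 2$, so $\deg h = 2$. The induced map $C' \to C/\langle\sigma\rangle$ then has degree $1$, so it is an isomorphism, and $h$ is $q$ up to this isomorphism. Therefore $\pi_d$ factors through $q$, which forces $2 \mid \deg \pi_d = d$. This contradicts the assumption that $d$ is odd.

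The only point needing care is the bookkeeping. The genus of $\P^1$ is $0$, and the choice of $d$ means the relevant projection really has odd degree. We do not need $a \le b$ or the genus formula $(a-1)(b-1)$ beyond setting up the situation. No step is a genuine obstacle. Unlike the previous proposition, we never need the independence of the two projections, because parity alone gives the contradiction.
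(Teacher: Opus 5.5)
Your proof is correct and is essentially the paper's argument: Castelnuovo--Severi applied to the odd-degree projection $\pi_d$ and the degree-two quotient map $C \to C/\langle\sigma\rangle$ forces a common factor $h$ whose degree divides $\gcd(d,2)=1$, which is a contradiction. Your step identifying $h$ with the quotient map is harmless but not needed, since $\deg h>1$ already divides both $2$ and $d$.
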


\begin{proof}
It follows from the Castelnuovo--Severi inequality, combined with the fact that $\gcd(d,2) = 1$ and the fact that $C \mapsto C/\langle \sigma \rangle$ is a Galois cover.
\end{proof}

The same argument can be applied with two distinct and commuting involutions:

\begin{proposition}
Let $C/\C$ be a curve of genus $(a-1)(b-1)$ with $a \le b$ that admits two distinct and commuting involutions $\sigma_1,\sigma_2$. Assume that either $a$ is odd or $b$ is odd. Let $d \coloneq a$ if $a$ is odd, otherwise let $d \coloneq b$. Assume moreover that  
\[
    g(C) > 4 g(C/\langle \sigma_1,\sigma_2 \rangle) + 3(d-1).
\]
    
Then $C$ cannot embed in $\P^1 \times \P^1$ with bidegree $(a,b)$.    
\end{proposition}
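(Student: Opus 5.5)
Assume for a contradiction that $C$ embeds in $\P^1 \times \P^1$ with bidegree $(a,b)$. The two projections then give maps $f_a\colon C \to \P^1$ of degree $a$ and $f_b\colon C \to \P^1$ of degree $b$. Together they realize the closed immersion, so they cannot both factor through a common nontrivial cover $C \to C'$. In particular, the one of odd degree, call it $f_d$ with $d$ as in the statement, is available.

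Let $G=\langle \sigma_1,\sigma_2\rangle \cong (\Z/2)^2$, which holds because the involutions are distinct and commute. Let $\pi\colon C \to C/G$ be the quotient map, which is Galois of degree $4$. I would apply \cref{theorem: CS} to $\pi_1 = f_d$ and $\pi_2 = \pi$. The resulting bound is
\[
\deg(f_d)\cdot g(\P^1) + 4\, g(C/G) + (d-1)(4-1) = 4 g(C/G) + 3(d-1).
\]
By hypothesis $g(C)$ exceeds this, so both $f_d$ and $\pi$ factor through some $h\colon C \to C'$ with $\deg h > 1$.

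Next I would use the Galois structure. Since $\pi$ is Galois with group $G$, any intermediate cover $C \to C' \to C/G$ is, up to isomorphism, the quotient $C \to C/H$ for some subgroup $H \le G$. Because $\deg h>1$, the subgroup $H$ is nontrivial, so $|H|\in\{2,4\}$ and $\deg h$ is even. But $h$ also divides $f_d$ in the sense that $d = \deg h \cdot \deg(C' \to \P^1)$. Since $d$ is odd, $\deg h$ must be odd, which is a contradiction.

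The main point needing care is the Galois-correspondence step: that a factorization of $\pi$ through $h$ forces $h$ to be, up to isomorphism, a quotient by a nontrivial subgroup of $G$, and hence to have degree dividing $4$. This is standard from function fields: $\C(C/G)\subset \C(C')\subset \C(C)$, and $\C(C)/\C(C/G)$ is Galois with group $G$. After that, the parity of $d$ finishes the argument, exactly as in the preceding single-involution proposition. Note that only the odd-degree map is needed, so $a\le b$ plays no role beyond fixing the choice of $d$.
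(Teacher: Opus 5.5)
Your proposal is correct and is the paper's argument: Castelnuovo--Severi applied to the odd-degree ruling $f_d$ and the degree-$4$ quotient $C\to C/\langle\sigma_1,\sigma_2\rangle$, followed by the parity contradiction from $\gcd(d,2)=1$, with the details the paper's one-line proof leaves implicit now filled in. One small simplification: the Galois correspondence is not actually needed, since writing $\pi = p\circ h$ already gives $\deg h \mid 4$ by multiplicativity of degrees, while $\deg h \mid d$ with $d$ odd.
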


\begin{proof}
It follows from the Castelnuovo--Severi inequality, combined with the fact that $\gcd(d,2) = 1$ and the fact that $C \mapsto C/\langle \sigma_1, \sigma_2 \rangle$ is a Galois cover.
\end{proof}

The following is known as the Tower Theorem:
\begin{theorem}\cite[Theorem 4.4]{NO24},\cite[Proposition 2.4]{Poonen07}\label{thm: tower-theorem}
Let $C$ be a curve defined over a perfect field $k$ and $f : C \to \P^1$ be a non-constant morphism over $\overline{k}$ of degree $d$. Then there exists a curve $C'$ defined over $k$ and a non-constant morphism $C \to C'$ defined over $k$ of degree $d'$ dividing $d$ such that
\[
g(C') \le \left(\frac{d}{d'} -1 \right)^2.
\]
\end{theorem}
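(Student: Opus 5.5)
The plan is to work in the function field $\overline{k}(C)$. First replace $f$ by the subfield $E_f \coloneq \overline{k}(f) \subset \overline{k}(C)$, which has index $d$. The morphism $f$ is defined over some finite Galois extension $L/k$, so $G_k = \operatorname{Gal}(\overline{k}/k)$ acts on $\overline{k}(C) = \overline{k}\otimes_k k(C)$ through the first factor. It thus permutes the finitely many conjugate subfields $E_{f^\sigma} = \overline{k}(f^\sigma)$, each of index $d$.

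The guiding principle is Galois descent. A subfield $E$ with $\overline{k} \subset E \subset \overline{k}(C)$ that is stable under $G_k$ is of the form $\overline{k}\otimes_k E_0$ with $E_0 = E^{G_k} \subset k(C)$. Such an $E$ therefore corresponds to a curve $C'$ defined over $k$, together with a $k$-morphism $C \to C'$ of degree $[\overline{k}(C):E]$. The goal is to produce such a stable $E$ containing $E_f$. Then $d' \coloneq [\overline{k}(C):E]$ divides $d = [\overline{k}(C):E_f]$ by the tower law, and the remaining task is to bound $g(C')$.

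For the genus bound I would use the standard bidegree estimate. Suppose $E = E_f \cdot E_{f^\sigma}$ for a single $\sigma$, and write $e \coloneq d/d'$. Both $f$ and $f^\sigma$ factor through $C'$, each with degree $[E:E_f] = [E:E_{f^\sigma}] = e$. Since the two functions generate $E$, the induced map $C' \to \P^1\times\P^1$ is birational onto its image, a curve of bidegree $(e,e)$. Its arithmetic genus is $(e-1)^2$, and hence $g(C') \le (e-1)^2 = (d/d'-1)^2$, which is exactly the required inequality. If instead $E_f$ is itself Galois-stable, then $C'$ has genus $0$ and $d'=d$, and the inequality holds trivially.

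The main obstacle is ensuring that the Galois-stable field can be taken to be generated by only two conjugates. The compositum of all conjugates of $E_f$ is stable, but it might a priori need many generators, which would give only a weaker multi-projective genus bound. I would first try an extremal choice, arguing by induction on $d$ as follows.
\begin{enumerate}
\item If all conjugates $E_{f^\sigma}$ coincide, $E_f$ descends and we are done.
\item Otherwise, among all fields $E_f\cdot E_{f^\sigma}$ choose one, $E$, of minimal index $[\overline{k}(C):E]$ (equivalently, of maximal size).
\item If some conjugate $E_{f^\tau}$ is not contained in $E$, then $E\cdot E_{f^\tau}$ is strictly larger. One then checks, using the Castelnuovo--Severi inequality (\cref{theorem: CS}) or a lattice-of-subfields argument, that this can be rewritten as a two-generator compositum of conjugates, contradicting maximality.
\item If this fails, the fallback is to run the induction on the intermediate curve corresponding to $E$, with $f$ now of degree $e<d$. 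This mirrors the structure of the arguments in \cite[Proposition 2.4]{Poonen07} and \cite[Theorem 4.4]{NO24}.
\end{enumerate}
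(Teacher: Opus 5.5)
Your descent set-up, the divisibility $d'\mid d$ via the tower law, and the bidegree-$(e,e)$ bound for a compositum of two conjugates are all fine. The argument breaks, however, exactly at the step you flag as the main obstacle.

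\textbf{Step 3.} The claim that $E\cdot E_{f^\tau}$ ``can be rewritten as a two-generator compositum of conjugates'' is unjustified, and nothing forces it. In lattice terms, three conjugate fields can pairwise generate proper subfields of their joint compositum: under the Galois correspondence, take the three coordinate planes of $(\Z/2\Z)^3$, whose pairwise intersections are lines while the triple intersection is trivial. Castelnuovo--Severi gives genus inequalities, not generation statements.

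\textbf{Step 4.} The fallback also fails as written. The curve $C_E$ attached to $E$ is defined only over $\overline{k}$, which is why you are in this case. It also need not be $\P^1$, since its genus can be as large as $(e-1)^2$. So the statement you are inducting on does not apply to it.

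\textbf{The missing idea.} In essence, the cited proofs \cite{Poonen07}, \cite{NO24} give up on two-generation and carry the genus bound itself as the inductive invariant. Let $\mathcal{S}$ be the set of fields $L$ with $\overline{k}(f)\subseteq L\subseteq \overline{k}(C)$ and $g(L)\le([L:\overline{k}(f)]-1)^2$. It contains $\overline{k}(f)$, and chains in it are finite because $[\overline{k}(C):L]\mid d$, so you may pick $L$ maximal.

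Suppose $\sigma(L)\neq L$ for some $\sigma\in G_k$. Put $M=L\cdot\sigma(L)$, $e=[L:\overline{k}(f)]$ and $m=[M:L]=[M:\sigma(L)]\geq 2$; the two indices agree because $[\overline{k}(C):L]=[\overline{k}(C):\sigma(L)]$. The maps $C_M\to C_L$ and $C_M\to C_{\sigma(L)}$ have no common nontrivial factor, since $M$ is generated by $L$ and $\sigma(L)$. Also $g(\sigma(L))=g(L)$. Castelnuovo--Severi (\cref{theorem: CS}) then gives
\[
g(M)\le 2m\,g(L)+(m-1)^2\le 2m(e-1)^2+(m-1)^2,
\]
and
\[
(me-1)^2-2m(e-1)^2-(m-1)^2=m(e-1)\bigl(m(e+1)-2e\bigr)\ge 0 .
\]
Hence $M\in\mathcal{S}$ with $M\supsetneq L$, contradicting maximality.

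So the maximal $L$ is $G_k$-stable and descends to a curve $C'$ over $k$. With $d'=[\overline{k}(C):L]$ you get $d/d'=[L:\overline{k}(f)]$, and the required bound follows.

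Your bidegree estimate is exactly the case $e=1$ of this step. What was missing is the observation that Castelnuovo--Severi propagates the inequality $g\le(e-1)^2$ through non-rational intermediate fields. That removes any need for the final stable field to be generated by two conjugates.
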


\begin{corollary}\cite[Corollary 4.5]{NO24}
\label{corollary: map_over_k}
Let $C$ be a curve defined over a perfect field $k$ such that $C(k) \ne \emptyset$ and let $f : C \to \P^1$ be a non-constant morphism over $\overline{k}$ of prime degree $d$ such that $g(C) > (d - 1)^2$. Then there exists a non-constant morphism $C \to \P^1$ of degree $d$ defined over $k$.
\end{corollary}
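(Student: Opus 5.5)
The statement to prove is \cref{corollary: map_over_k}. The plan is to apply the Tower Theorem (\cref{thm: tower-theorem}) to the given morphism $f\colon C\to\P^1$ of prime degree $d$, and then use primality of $d$ to pin down the intermediate curve. The theorem gives a curve $C'$ over $k$ and a non-constant $k$-morphism $\pi\colon C\to C'$ of degree $d'$ dividing $d$, with $g(C')\le (d/d'-1)^2$. Since $d$ is prime, $d'\in\{1,d\}$, so there are two cases.

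\textbf{Case $d'=1$.} Then $\pi$ is a birational morphism between smooth projective curves, hence an isomorphism, so $g(C)=g(C')\le (d-1)^2$. This contradicts the hypothesis $g(C)>(d-1)^2$, so this case cannot occur.

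\textbf{Case $d'=d$.} Then $g(C')\le (1-1)^2=0$, so $C'$ is a genus $0$ curve over $k$, that is, a smooth conic. To identify $C'$ with $\P^1_k$ we need a $k$-rational point. Here the hypothesis $C(k)\neq\emptyset$ enters: if $P\in C(k)$, then $\pi(P)\in C'(k)$ because $\pi$ is defined over $k$. A genus $0$ curve over a perfect field with a rational point is isomorphic to $\P^1_k$, since the divisor $\pi(P)$ gives a degree-one $k$-rational divisor and Riemann--Roch then produces a $k$-isomorphism. Composing $\pi$ with this isomorphism gives a non-constant morphism $C\to\P^1$ of degree $d$ defined over $k$.

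There is no real obstacle. The only points needing care are that a degree-one morphism of smooth projective curves is an isomorphism, so that genus is preserved in the first case, and that a pointed genus $0$ curve over $k$ is $k$-isomorphic to $\P^1_k$. Both are standard.
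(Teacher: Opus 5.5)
Your proposal is correct and is essentially the intended argument: the paper states this result as a corollary of the Tower Theorem \cref{thm: tower-theorem} (citing \cite{NO24} without reproducing a proof), and you derive it the same way. Primality forces $d'\in\{1,d\}$, the case $d'=1$ contradicts $g(C)>(d-1)^2$, and in the case $d'=d$ the image of a point of $C(k)$ identifies the genus-zero curve $C'$ with $\P^1_k$.
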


For trigonal and tetragonal curves we have the following results:
\begin{proposition}\cite[Corollary 4.6]{NO24} 
 \begin{enumerate}[(i)]
\item Let $C$ be a curve over $\Q$ of genus $\ge 5$  which is trigonal over $\C$ and such that $C(\Q)\ne \emptyset$. Then $C$ is trigonal over $\Q$.
\item Let $C$ be a curve defined over $\Q$ with $\gon_{\C}(C) = 4$ and $g(C) \ge 10$ and such that $C(\Q)\ne \emptyset$. Then $\gon_{\Q}(C) = 4$.
\end{enumerate}  
\end{proposition}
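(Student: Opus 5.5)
Both parts follow the pattern of \cref{corollary: map_over_k}: apply the Tower Theorem (\cref{thm: tower-theorem}) to the geometric gonal map and use the genus bound to exclude the "intermediate curve" alternative. Let $f\colon C\to\P^1$ be a degree-$d$ map over $\C$, with $d=3$ in part (i) and $d=4$ in part (ii). Since $C$ is defined over $\Q$, we may take $f$ over $\overline{\Q}$. The Tower Theorem then gives a curve $C'/\Q$ and a morphism $\phi\colon C\to C'$ over $\Q$ of degree $d'\mid d$ with $g(C')\le (d/d'-1)^2$.

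For (i), take $d=3$. If $d'=1$, then $C'\cong C$ and $g(C)\le (3-1)^2=4$, which contradicts $g(C)\ge 5$. So $d'=3$, and $g(C')\le 0$, i.e.\ $C'$ is a genus $0$ curve over $\Q$. The image of a point of $C(\Q)$ is a rational point of $C'$, so $C'\cong\P^1_\Q$ and $\phi$ is a degree-$3$ map over $\Q$. Then $\gon_\Q(C)\le 3$. Conversely, $\gon_\Q(C)\ge\gon_\C(C)=3$, so $\gon_\Q(C)=3$. (This is exactly \cref{corollary: map_over_k} with $d=3$ and $g>4$.)

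For (ii), take $d=4$, so $d'\in\{1,2,4\}$. The case $d'=1$ gives $g(C)\le 9$, excluded by $g(C)\ge 10$. If $d'=4$, then $g(C')=0$ and, as in (i), $C'\cong\P^1_\Q$ because it has a rational point, which yields a degree-$4$ map over $\Q$.

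The remaining case $d'=2$ is the main obstacle. Here $\phi\colon C\to C'$ is a degree-$2$ map over $\Q$ with $g(C')\le 1$. If $g(C')=0$, then again $C'\cong\P^1_\Q$, so $C$ is hyperelliptic over $\Q$. That contradicts $\gon_\C(C)=4$, so this subcase cannot occur. If $g(C')=1$, then $C'$ is a genus-$1$ curve over $\Q$, and it has a rational point as the image of a point of $C(\Q)$, so it is an elliptic curve. An elliptic curve admits a degree-$2$ map to $\P^1$ defined over $\Q$: take the $x$-coordinate of a Weierstrass model, i.e.\ the linear system $|2O|$. Composing this with $\phi$ gives a degree-$4$ map $C\to\P^1_\Q$ over $\Q$.

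In every case $\gon_\Q(C)\le 4=\gon_\C(C)\le\gon_\Q(C)$, which proves (ii). The only care needed is in the step where the existence of a rational point turns a genus $0$ or genus $1$ curve $C'$ into $\P^1_\Q$ or an elliptic curve over $\Q$.
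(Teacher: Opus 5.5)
Your argument is correct and is essentially the standard one. The paper only cites \cite[Corollary 4.6]{NO24}. That proof is the same Tower Theorem case analysis on $d'\mid d$: part (i) is the prime-degree Corollary 4.5 with $g>4$, and part (ii) excludes $d'=1$ by $g\ge 10$ and then uses the rational point to turn a genus-$\le 1$ intermediate curve into $\P^1_\Q$ or an elliptic curve. The one step you state without justification, taking the geometric gonal map over $\overline{\Q}$, is the standard invariance of gonality under extension of algebraically closed fields, so nothing is missing.
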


We can deduce an immediate corollary for low gonality curves:
\begin{corollary}\label{corollary: low_gon_eq}
Let $C$ be a curve over $\Q$ of genus $\ge 10$ such that $C(\Q)\ne \emptyset$ and $\gon_{\C}(C) \le 4$. Then
\[
\gon_{\Q}(C) = \gon_{\C}(C).
\]
\end{corollary}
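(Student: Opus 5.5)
We split on the value of $\gon_{\C}(C)$. Since $C(\Q)\neq\emptyset$, the only cases that can occur are $\gon_{\C}(C)\in\{1,2,3,4\}$. In every case the inequality $\gon_{\C}(C)\le\gon_{\Q}(C)$ is automatic, because a morphism $C\to\P^1$ defined over $\Q$ is in particular a morphism over $\C$. So the whole task is to produce a map of degree $\gon_{\C}(C)$ that is defined over $\Q$.

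\emph{Case $\gon_{\C}(C)=1$.} Then $g(C)=0$, which contradicts $g(C)\ge 10$, so this case does not occur.

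\emph{Case $\gon_{\C}(C)=2$.} Here $C$ is geometrically hyperelliptic of genus at least $2$. We apply \cref{corollary: map_over_k} with $d=2$, which is prime. The hypothesis $g(C)>(2-1)^2=1$ holds, and $C(\Q)\ne\emptyset$. The corollary gives a degree $2$ morphism $C\to\P^1$ over $\Q$, hence $\gon_{\Q}(C)\le 2$.

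\emph{Case $\gon_{\C}(C)=3$.} Since $g(C)\ge 10\ge 5$, part (i) of the preceding proposition of \cite{NO24} applies and shows that $C$ is trigonal over $\Q$. Alternatively, \cref{corollary: map_over_k} with $d=3$ applies directly, because $g(C)\ge 10>4=(3-1)^2$.

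\emph{Case $\gon_{\C}(C)=4$.} Here $4$ is not prime, so \cref{corollary: map_over_k} is unavailable. This is the case where the hypothesis $g(C)\ge 10$ is genuinely needed. We invoke part (ii) of the preceding proposition of \cite{NO24}: its hypotheses $\gon_{\C}(C)=4$, $g(C)\ge 10$ and $C(\Q)\neq\emptyset$ are exactly ours, and it gives $\gon_{\Q}(C)=4$.

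No step presents a real obstacle, since each case is a direct citation. The only point needing care is checking that the genus hypotheses of the cited results are met in each case, and $g\ge 10$ covers all of them.
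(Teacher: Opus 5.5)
Your proof is correct and follows the paper's route: the paper presents this as an immediate consequence of parts (i) and (ii) of the cited proposition from \cite{NO24}, exactly as in your cases $\gon_{\C}(C)=3$ and $\gon_{\C}(C)=4$. Your case split also makes explicit the cases the paper leaves implicit, namely $\gon_{\C}(C)=1$, excluded by the genus bound, and $\gon_{\C}(C)=2$, which you handle correctly via \cref{corollary: map_over_k} with $d=2$.
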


\subsection{Geometry of curves embedded in $\P^1 \times \P^1$ or $\P^2$}

The gonality of curves embedded in the toric surface $\P^2$ or $\P^1 \times \P^1$ is well understood due to the following:

\begin{theorem}[Max Noether]
\label{thm: Max_Noether}
Let $K$ be a number field and $C \hookrightarrow \P^2_{\overline{K}}$ a smooth plane curve of degree $d \ge 3$. Then $\gon_{K}(C) = d-1$ if $C(K) \ne \emptyset$ and $\gon_{K}(C) = d$ if $C(K) = \emptyset$.

\end{theorem}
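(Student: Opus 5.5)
The plan is to prove the two directions of Max Noether's statement separately: first the upper bound via projection from a point, then the lower bound via the classical geometric statement $\gon_{\C}(C)=d-1$ together with a descent argument over $K$.

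\emph{Upper bounds.} For $d\ge 3$, the curve $C\subset \P^2$ is cut out by a form of degree $d$, and since $C$ is smooth its genus is $g=(d-1)(d-2)/2$. Lines through a fixed point $P\in \P^2$ form a $\P^1$. If $P\in C(K)$, projection from $P$ is a $K$-rational map $C\to\P^1$. It extends to a morphism because $C$ is smooth at $P$, and its degree is $d-1$, since each line meets $C$ in $d$ points counted with multiplicity and one of them is $P$. If $C(K)=\emptyset$, I would choose any $K$-rational point $P\in\P^2(K)\setminus C$; such a point exists because $\P^2(K)$ is Zariski dense. Projecting from $P$ then gives a $K$-morphism of degree $d$. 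This yields $\gon_K(C)\le d-1$ or $\le d$ in the two cases.

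\emph{Geometric lower bound.} Next I would prove $\gon_{\overline K}(C)\ge d-1$. Suppose $D$ is an effective divisor of degree $e\le d-2$ with $h^0(D)\ge 2$. Adjunction gives $K_C=(d-3)H|_C$, and the plane curves of degree $d-3$ cut out the complete canonical series. By Riemann--Roch, $h^0(D)\ge2$ means $D$ imposes at most $e-1$ conditions on the degree $d-3$ plane curves. On the other hand, any $e\le d-2$ points in $\P^2$, even with multiplicities along $C$, impose independent conditions on curves of degree $d-3$ unless $d-1$ of them are collinear; this is the standard bound, checked inductively using lines. Since $e\le d-2<d-1$, the points impose independent conditions, which is a contradiction. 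For $e\le d-2$ the collinear situation can only help by making the count fail at exactly $e=d-1$ points, which is the projection. Hence $\gon_{\overline K}(C)=d-1$. Moreover, I would show that every $g^1_{d-1}$ is projection from a point of $C$. The same counting gives that the $d-1$ points of a divisor in the pencil fail to impose independent conditions on degree $d-3$ curves, which forces them to be collinear on a line $L$. The residual point of $L\cap C$ is then the centre, and moving in the pencil pins down this centre.

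\emph{Descent; the main obstacle.} The hardest step is handling $K$-rationality. Suppose $C(K)=\emptyset$ and there were a $K$-morphism $C\to\P^1$ of degree $d-1$. I expect this to be the main obstacle. It is a $g^1_{d-1}$ defined over $K$, so by the classification above it is projection from a unique point $Q\in C(\overline K)$. Uniqueness means $Q$ is fixed by $\mathrm{Gal}(\overline K/K)$, and so $Q\in C(K)$, a contradiction. For this I need the embedding $C\hookrightarrow \P^2$ to be defined over $K$, or at least that the $g^2_d$ is unique and hence Galois-stable; uniqueness of the $g^2_d$ is a classical fact for $d\ge4$. For $d=3$ we have $g=1$ and the claim reduces to genus-one curves: such a curve has a degree-$2$ map over $K$ exactly when it has a degree-$2$ divisor class over $K$. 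That cannot happen for a plane cubic with no $K$-point, since the classes of degree $2$ and $3$ over $K$ would give a divisor of degree $1$ over $K$. Smaller degrees over $K$ are already ruled out by the geometric bound. Finally, degree $d$ over $K$ when $C(K)=\emptyset$ is given by the upper bound. The delicate points to verify carefully are this uniqueness of the centre and the Galois descent of the linear system.
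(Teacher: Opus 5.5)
The paper gives no proof of this statement. It quotes it as a classical result and uses it as a black box, so there is no internal argument to compare with.

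Your proof is the standard one, and it is correct:
\begin{itemize}
\item Adjunction identifies the canonical series with the restriction of plane curves of degree $k=d-3$.
\item Any subscheme of $C$ of length at most $k+1$ imposes independent conditions on these curves. This gives $\gon_{\overline K}(C)=d-1$.
\item A subscheme of length $k+2$ fails to impose independent conditions only if it lies on a line (a standard residual-sequence argument). Hence every degree-$(d-1)$ pencil is $|H-Q|$.
\item The centre $Q$ is unique, since $Q\sim Q'$ forces $Q=Q'$ when $g\ge 1$. Galois invariance of a $K$-rational pencil therefore puts $Q$ in $C(K)$.
\end{itemize}

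The one thing to fix is the hypothesis. Your argument needs $C\subset\P^2_K$, and so does the statement, although the paper writes $C\hookrightarrow\P^2_{\overline K}$.

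Your fallback to uniqueness of the $g^2_d$ for $d\ge 4$ rescues only the lower bound. It does not rescue the upper bound $\gon_K(C)\le d$ when $C(K)=\emptyset$. With only a geometric embedding, the model may live in a nontrivial Brauer--Severi surface over $K$, which has no $K$-points. Then there is no $K$-rational centre to project from, and the claim can fail. For example, with $d=3$, a genus-one curve of period $3$ and index $9$ is geometrically a smooth plane cubic but has $\gon_K(C)\ge 9$.

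Your separate $d=3$ argument also silently uses that the hyperplane class is represented by a $K$-rational divisor.

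So state the theorem for a plane curve defined over $K$. This is how the paper actually applies it: it first descends the plane model via Ro\'e--Xarles, where $3\nmid d$. With that hypothesis, the case $d=3$ needs no special treatment either, because $H$ is $K$-rational and the same descent of $Q$ applies.
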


\begin{proposition}\cite[Theorem 1.2]{CDJP19}
Let $K$ be an algebraically closed field of characteristic zero and consider a smooth curve $C \subset \P^1 \times \P^1$ of bidegree $(a,b)$. Then 
\[
\gon_{\C}(C) = \min \{ a, b \}.
\]
\end{proposition}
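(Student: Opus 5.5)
The statement to prove is the last one shown: for a smooth curve $C\subset\P^1\times\P^1$ of bidegree $(a,b)$ over an algebraically closed field of characteristic zero, $\gon(C)=\min\{a,b\}$. I will assume $a\le b$. If $a\le 1$ the curve is rational (genus $(a-1)(b-1)\le 0$), or is a fibre, and the statement is trivial, so I take $2\le a\le b$.

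\textbf{Upper bound.} The first projection $\pi_1\colon C\to\P^1$ has degree equal to the intersection number $C\cdot F_1$, where $F_1$ is a fibre of $\pi_1$. With the convention that bidegree $(a,b)$ means $C\cdot F_1=a$, this degree is $a$. Hence $\gon(C)\le a=\min\{a,b\}$.

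\textbf{Lower bound.} Suppose $D$ is an effective divisor of degree $d<a$ with $h^0(D)\ge 2$; I want a contradiction. The setup is as follows.
\begin{enumerate}
\item By adjunction, $K_C=(K_{\P^1\times\P^1}+C)|_C=\mathcal{O}(a-2,b-2)|_C$. I use the convention that $\mathcal{O}(i,j)$ restricts to $C$ with degree $ib+ja$, so that $\mathcal{O}(1,0)$ pulls back from the second factor.
\item The restriction map $H^0(\P^1\times\P^1,\mathcal{O}(a-2,b-2))\to H^0(C,K_C)$ is an isomorphism. Its kernel is $H^0(\mathcal{O}(-2,-2))=0$. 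Surjectivity follows from $H^1(\mathcal{O}(-2,-2))=0$ by Künneth, and the dimensions match since $(a-1)(b-1)=g$.
\end{enumerate}
So the canonical model of $C$ is its image under the Segre--Veronese map $\phi_{a-2,b-2}$. Geometric Riemann--Roch then says that a $g^1_d$ exists exactly when $d$ points of $C$ impose at most $d-1$ conditions on curves of bidegree $(a-2,b-2)$.

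\textbf{Key step.} The main point is to show that any $d\le a-1$ points of $\P^1\times\P^1$ impose independent conditions on $|\mathcal{O}(a-2,b-2)|$ (points of $C$ suffice, but the general statement is cleaner). I would prove this by induction on $d$ by constructing separating sections. For each point $p$ of a set $Z$ with $|Z|=d\le a-1\le b-1$, I need a curve of bidegree $(a-2,b-2)$ through $Z\setminus\{p\}$ but not through $p$. I build it as a union of fibres. For each $q\ne p$ in $Z$, choose the fibre through $q$ that misses $p$: if $q$ and $p$ lie on the same fibre of one ruling, take the fibre of the other ruling, which differs from $p$'s. This uses at most $d-1\le a-2$ fibres of each ruling. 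Padding with further fibres that avoid $p$ gives a curve of exactly bidegree $(a-2,b-2)$. Such a curve exists for every $p\in Z$, so the $d$ evaluation functionals are independent, and $h^0(K_C-D)=g-d$.

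\textbf{Conclusion.} Riemann--Roch gives $h^0(D)=d-g+1+h^0(K_C-D)=1$, contradicting $h^0(D)\ge 2$. Therefore $\gon(C)\ge a$, and combined with the upper bound, $\gon(C)=a=\min\{a,b\}$.

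The main obstacle I anticipate is the bookkeeping in the key step: matching the bidegree convention so that exactly $a-2$ fibres of the correct ruling are available. This matters precisely in the boundary case $d=a-1$, where all $d-1=a-2$ fibres of one ruling may be needed. If that fails, I would fall back to the Hodge index/Bogomolov-type argument used by Castryck et al.
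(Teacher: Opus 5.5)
Your proof is correct apart from one small omission, and it takes a genuinely different route from the paper. The paper gives no argument of its own; it simply quotes \cite[Theorem 1.2]{CDJP19}. You instead make the statement self-contained with a classical canonical-model argument:
\begin{enumerate}
\item adjunction, together with the vanishing of $H^0$ and $H^1$ of $\mathcal{O}(-2,-2)$, identifies $|K_C|$ with the restriction of $|\mathcal{O}(a-2,b-2)|$;
\item unions of rulings show that any $d\le\min\{a,b\}-1$ distinct points impose independent conditions on this linear system;
\item Riemann--Roch then forces $h^0(D)=1$.
\end{enumerate}
Your worry about the boundary case $d=a-1$ is unnecessary. Since $d-1\le a-2\le b-2$, each ruling supplies enough fibres, so the bidegree convention plays no role.

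The omission is that your key step is formulated for $d$ \emph{distinct} points. The divisor $D$ with $h^0(D)\ge 2$ that you start from need not be reduced. To fix this, take $D$ to be a general fibre of a morphism $C\to\P^1$ of degree $d$. In characteristic zero such a map has only finitely many branch points, so a general fibre consists of $d$ distinct points.

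As for what each approach buys: the citation costs the paper nothing. Your argument uses only adjunction, K\"unneth, and Riemann--Roch, and can be checked in a few lines. It also shows that characteristic zero enters only through the choice of a reduced member of the pencil.
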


We recall a theorem of Serrano \cite[Theorem 3.1]{Serrano87} as stated in \cite[Theorem 1.2]{Paoletti95}:

\begin{theorem}[Serrano]
\label{thm: Serrano}
Let $C$ be an irreducible smooth curve contained in a smooth surface $S$. Suppose that there exists a morphism $\Phi  : C \to \P^1$ of degree $d$. If $C^2 > (d+1)^2$, then there exists a morphism $\Psi : S \to \P^1$ extending $\Phi$.
\end{theorem}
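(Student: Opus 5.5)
We would follow the vector bundle method of Lazarsfeld and Reider together with Bogomolov instability, which is Serrano's strategy in \cite{Serrano87}. Let $D$ be a fibre of $\Phi$, a divisor of degree $d$ on $C$. Then $V = H^0(\mathcal{O}_C(D))$ contains a base-point-free pencil $V_0 \subset V$ of dimension $2$. Viewing $\mathcal{O}_C(D)$ as a sheaf on $S$, we define $F$ as the kernel of the surjection $V_0 \otimes \mathcal{O}_S \to \mathcal{O}_C(D)$. This $F$ is locally free of rank $2$, since the cokernel is a line bundle on a Cartier divisor. A Chern class computation gives $c_1(F) = -C$ and $c_2(F) = d$. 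We put $E = F^{\vee}$, so that $c_1(E) = C$ and $c_2(E) = d$. The bundle $E$ has two sections coming from $V_0^{\vee}$, and it is generated by them away from a finite set.

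Next we apply Bogomolov's criterion. The hypothesis $C^2 > (d+1)^2 \ge 4d$ gives $c_1(E)^2 - 4c_2(E) > 0$, so $E$ is Bogomolov unstable. Hence there is a destabilizing sequence
\[
0 \to \mathcal{O}_S(M) \to E \to \mathcal{I}_Z \otimes \mathcal{O}_S(N) \to 0, \qquad M + N = C,
\]
with $Z$ a zero-dimensional subscheme. The class $A \coloneq M - N = 2M - C$ satisfies $A^2 > 0$ and lies in the positive cone, so $A \cdot H > 0$ for every ample $H$, and $A\cdot C>0$ since $C$ is effective and $A$ big. From $c_2(E) = M\cdot N + \ell(Z) = d$ we get $M \cdot N \le d$. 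Since $E$ is generically globally generated, its quotient $\mathcal{I}_Z(N)$ is generically generated by the two sections. Therefore $N$ is effective and moves in a pencil, $N \ne 0$, and $N^2 \ge 0$.

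The numerical step is the one we expect to be the main obstacle. We write $M\cdot N = (C^2 - A^2)/4 \le d$ and $N\cdot C = (C^2 - A\cdot C)/2$. The Hodge index theorem gives $(A\cdot C)^2 \ge A^2 C^2$, and hence $A \cdot C \ge \sqrt{C^2(C^2 - 4d)}$. Substituting into $N\cdot C$ bounds it above by roughly $\tfrac12\bigl(C^2 - \sqrt{C^2(C^2-4d)}\bigr)$. The hypothesis $C^2 > (d+1)^2$ is exactly what should make this strictly less than $d+1$, so $N\cdot C \le d$. Applying Hodge index once more to $N$ and $C$ gives $N^2 C^2 \le (N\cdot C)^2 \le d^2 < C^2$, which forces $N^2 = 0$. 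This has to be checked carefully, including the integrality and the edge cases, and it is where the precise constant $(d+1)^2$ is used.

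Finally we identify the pencil. The sections of $\mathcal{I}_Z(N)$ induced from $V_0^{\vee}$ give a pencil in $|N|$. Since $N^2 = 0$, it has no base points, so $Z = \emptyset$, and after Stein factorization it defines a morphism $\Psi\colon S \to \P^1$. Restricting to $C$, the composite $C \to \P^1$ is induced by the same two sections that define $\Phi$. Its degree is $N\cdot C \le d$, and the quotient map $E|_C \to \mathcal{O}_C(N)$ is compatible with the dual of the evaluation map $V_0 \otimes \mathcal{O}_C \to \mathcal{O}_C(D)$. Together these force $N|_C = D$ and $\Psi|_C = \Phi$. If Stein factorization introduced a multiple cover, the degree would drop below $d$, contradicting the equality $N|_C = D$. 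So $\Psi$ extends $\Phi$.
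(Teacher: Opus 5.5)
The paper does not prove this statement; it quotes it from \cite{Serrano87} in the form given in \cite{Paoletti95}. So there is no in-paper argument to compare with, and your proposal has to stand on its own.

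Your overall strategy is sound: the rank-two bundle $E$ attached to the pencil, Bogomolov instability, then Hodge index. The step you flagged as the main obstacle does work. With $x=C^2$, the inequality $\tfrac12\bigl(x-\sqrt{x^2-4dx}\bigr)<d+1$ reduces, after squaring, exactly to $x>(d+1)^2$. Squaring is legitimate because $x>(d+1)^2\ge 2(d+1)$. You should justify $A\cdot C>0$ and $N\cdot C\ge 0$ by the nefness of $C$, which holds because $C$ is irreducible with $C^2>0$. Effectivity of $C$ alone is not enough.

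The genuine gap is your claim that the two sections from $V_0^\vee$ generate $E$ away from a finite set. Dualizing the defining sequence gives
\[
0\to V_0^\vee\otimes\mathcal{O}_S\to E\to \mathcal{E}xt^1\bigl(\mathcal{O}_C(D),\mathcal{O}_S\bigr)\cong\mathcal{O}_C(C-D)\to 0 .
\]
So at every point of $C$ these sections span only a line in the fibre of $E$; they generate $E$ only off $C$. Hence their images in $H^0(\mathcal{I}_Z(N))$ span a linear system $\Lambda\subset|N|$ whose base locus is only known to lie in $C\cup Z$. A priori $\Lambda$ could have fixed part $kC$, or the two images could even be proportional. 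Your claim $N^2\ge 0$ needs $\Lambda$ to be a pencil without fixed components, and so does your later inference ``$N^2=0$, hence no base points''. As written, neither follows.

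The repair uses the Bogomolov positivity you already have.
\begin{itemize}
\item First, $N\not\sim 0$. Otherwise a nonzero section of $\mathcal{I}_Z\subset\mathcal{O}_S$ forces $Z=\emptyset$, and then $d=c_2(E)=M\cdot N+\ell(Z)=0$.
\item Next, suppose $\Lambda$ has fixed part $kC$ with $k\ge 1$. This includes the degenerate case where $\Lambda$ is a single divisor, which must then be $kC$. Write $N=kC+N'$ with $N'$ effective. Then $M-N=C-2N=(1-2k)C-2N'$ has $(M-N)\cdot H<0$ for any ample $H$, contradicting Bogomolov.
\end{itemize}
So $\Lambda$ is a pencil without fixed components and $N^2\ge 0$. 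Your numerics then give $N^2=0$, so the pencil is base-point-free, and hence $Z=\emptyset$.

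Your last paragraph can also be tightened. No Stein factorization is needed, since the base-point-free pencil already is a morphism $\Psi\colon S\to\P^1$. The remark about the degree dropping does not make sense here. To get $\Psi|_C=\Phi$, restrict the dual sequence to $C$. This gives $0\to\mathcal{O}_C(D)\to E|_C\to\mathcal{O}_C(C-D)\to 0$. The map $V_0^\vee\otimes\mathcal{O}_C\to E|_C$ factors as the surjection $V_0^\vee\otimes\mathcal{O}_C\to\mathcal{O}_C(D)$, followed by the inclusion $\mathcal{O}_C(D)\subset E|_C$. That surjection is dual to $\mathcal{O}_C(-D)=\ker\bigl(V_0\otimes\mathcal{O}_C\to\mathcal{O}_C(D)\bigr)$. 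Since $V_0^\vee\otimes\mathcal{O}_C\to\mathcal{O}_C(N)$ is surjective, the induced map $\mathcal{O}_C(D)\to\mathcal{O}_C(N)$ is a surjection of line bundles, hence an isomorphism. Thus $N|_C\sim D$, and $\Psi|_C$ and $\Phi$ are defined by the same quotient of $V_0^\vee\otimes\mathcal{O}_C$.
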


We know that if $C \hookrightarrow \P^1 \times \P^1$ with bidegree $(a,b)$ then $C^2 = 2ab$.

\begin{theorem}\cite[Theorem 3]{Takahashi12}\label{thm: Takahashi_extension_theorem }

Let $C \sim aC_0 + bf$ be a nonsingular projective curve on the Hirzebruch surface $\mathcal{H}_e$ $(e \ge 0)$ and assume that $a, b \ge 3$ and $(e, a) \ne (1, b), (1, b - 1)$. Then, every automorphism of $C$ can be extended to an automorphism of $\mathcal{H}_e$. Namely, for every $\sigma \in \Aut(C)$, there exists $\hat{\sigma} \in \Aut(\mathcal{H}_e)$ such that $\hat{\sigma}(C)=C$ and $\hat{\sigma}|_C =\sigma$.

\end{theorem}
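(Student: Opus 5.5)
The plan is to show that $\sigma$ preserves the ruling of $\mathcal{H}_e$ restricted to $C$. Then I would reconstruct $\mathcal{H}_e$ intrinsically from $C$ via linear systems built from $K_C$ and that ruling, so that $\sigma$ acts on the whole construction.

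\textbf{Step 1: the ruling is intrinsic.} Let $\pi\colon \mathcal{H}_e\to\P^1$ be the ruling and $F=f|_C$, so $\deg F=C\cdot f=a$. Using $C_0^2=-e$, $C_0\cdot f=1$, $f^2=0$, we get $C^2=2ab-ea^2$. I would apply \cref{thm: Serrano} to the degree-$a$ pencil $\sigma^*|F|$. Away from the excluded cases $(e,a)=(1,b),(1,b-1)$, the hypotheses $a,b\ge 3$ should give $C^2>(a+1)^2$; this is exactly where the exceptions have to enter, and I would check the inequality case by case in $e$. Serrano then extends $\sigma^*|F|$ to a morphism $\mathcal{H}_e\to\P^1$. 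A morphism from $\mathcal{H}_e$ to $\P^1$ with connected fibres is a ruling. For $e>0$ the ruling is unique; for $e=0$ there are two, and the second can only occur when $a=b$. Hence $\sigma^*F\sim F$, after possibly composing with the swap of factors when $e=0$ and $a=b$. Consequently there is $\tau\in\Aut(\P^1)$ with $\pi\circ\sigma=\tau\circ\pi$ on $C$.

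\textbf{Step 2: linearize $\sigma$.} By adjunction, $K_{\mathcal{H}_e}=-2C_0-(2+e)f$ gives $K_C=D|_C$ with $D=K_{\mathcal{H}_e}+C=(a-2)C_0+(b-2-e)f$. Put $D_n=D+nf$. The restriction sequence
\[
0\to\mathcal{O}(K_{\mathcal{H}_e}+nf)\to\mathcal{O}(D_n)\to\mathcal{O}_C(K_C+nF)\to 0
\]
has $H^0=H^1=0$ on the left for $n\ge 0$, by a direct computation on the rational ruled surface. So $H^0(\mathcal{H}_e,D_n)\cong H^0(C,K_C+nF)$. Since $K_C$ and $F$ are $\sigma$-invariant, $\sigma$ acts linearly (up to scalar) on this space. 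For $n\gg0$, $D_n$ is very ample because $a-2\ge1$. Then $\sigma$ induces a projective linear map $A$ of $\P^N=\P H^0(D_n)^\vee$ with $A(\varphi(C))=\varphi(C)$, where $\varphi$ is the embedding of $\mathcal{H}_e$.

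\textbf{Step 3 (main obstacle): $A$ preserves $\varphi(\mathcal{H}_e)$.} I would recover the surface from $C$ and $\pi$ fibrewise. For each $t$, the fibre $\pi^{-1}(t)$ is embedded by $D_n|_f$, of degree $a-2$, as a rational normal curve. It passes through the $a$ points $\varphi(C\cap\pi^{-1}(t))$, which span the same $\P^{a-2}$.

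The difficulty is that $a$ points do not determine a rational normal curve of degree $a-2$. Two alternatives for pinning it down:
\begin{itemize}
\item Vary $n$: compare the embeddings for $D_n$ and $D_{n+1}$ using the $\sigma$-equivariant multiplication map $H^0(D_n)\otimes H^0(F)\to H^0(D_{n+1})$.
\item Show that the homogeneous ideal of $\varphi(\mathcal{H}_e)$ in low degree coincides with that of $\varphi(C)$. For this I would choose $n$ so that $H^0(\mathcal{H}_e,2D_n-C)=0$ fails only in a controlled way, and otherwise argue that the quadrics through $\varphi(C)$ cut out $\varphi(\mathcal{H}_e)$ because $C$ is not contained in a smaller scroll.
\end{itemize}

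Once $A(\varphi(\mathcal{H}_e))=\varphi(\mathcal{H}_e)$, the restriction $\hat\sigma=\varphi^{-1}\circ A\circ\varphi$ is an automorphism of $\mathcal{H}_e$ with $\hat\sigma(C)=C$ and $\hat\sigma|_C=\sigma$. In the swapped case with $e=0$ and $a=b$, compose with the factor swap.
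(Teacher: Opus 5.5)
Your Step~1 is fine in outline. For $e=0$ you need to take $a\le b$; for $e\ge1$ the inequality $C^2>(a+1)^2$ uses $b\ge ea$ and fails exactly in the excluded cases. The real gap is Step~3, which carries all the content: you leave it open, and neither suggested fix can close it.

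For $a\ge4$, $|D_n|$ sends each fibre to a rational normal curve of degree $a-2$. There is an $(a-3)$-dimensional family of such curves through the $a$ points of $C\cap f$. Since $D_{n+1}|_f=D_n|_f$, comparing $n$ with $n+1$ through $H^0(F)$ says nothing about which of these curves is the right one.

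The quadric route fails outright. $\varphi(\mathcal{H}_e)$ is projectively normal, so any nonzero section of $2D_n-C=(a-4)C_0+(b-4-2e+2n)f$, multiplied by an equation of $C$, comes from a quadric containing $\varphi(C)$ but not $\varphi(\mathcal{H}_e)$. In the range of the theorem this space is nonzero for every $n\ge0$ as soon as $a\ge4$.

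Step~2 also contains an error. $H^1(\mathcal{H}_e,K+nf)\cong H^1(\P^1,\mathcal{O}(-n))^\vee$ has dimension $n-1$, and $H^1(D_n)=0$ for $n\ge1$. Hence $H^0(D_n)\to H^0(C,K_C+nF)$ is not surjective for $n\ge2$. Your ``$n\gg0$'' is therefore unavailable; only $n\le1$ gives the isomorphism. (The paper gives no proof here; it only quotes Takahashi.)

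The missing idea is control of the second generator $\mathcal{O}_C(C_0)$, which $K_C$ and $F$ determine only up to $(a-2)$-torsion. Get it from the complete linear system of $N:=K_C-(b-2-e)F\cong\mathcal{O}_C((a-2)C_0)$, which is $\sigma$-invariant once Step~1 is known. All sections of $N$ come from $\mathcal{H}_e$, because $H^1(\mathcal{H}_e,-2C_0-bf)\cong H^1(\P^1,\mathcal{O}(b-2-e))^\vee=0$.

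If $e\ge1$, then $h^0(\mathcal{H}_e,(a-2)C_0)=1$, so $|N|=\{(a-2)Z\}$ with $Z=C\cap C_0$. Invariance forces $\sigma^*Z=Z$, hence $\sigma^*\mathcal{O}_C(C_0)\cong\mathcal{O}_C(C_0)$. Now embed by $H=C_0+kf$ with $e<k<b-(a-2)e$. This range is nonempty because $b\ge(a-1)e+2$ holds throughout the theorem: for $e\ge2$ this follows from $b\ge ea$, and for $e=1$ exactly because $b\ne a$. Relative duality gives $H^1(\mathcal{H}_e,H-C)\cong\bigoplus_{j=1}^{a-2}H^0(\P^1,\mathcal{O}(je+k-b))=0$, so $H^0(\mathcal{H}_e,H)\cong H^0(C,H|_C)$ is $\sigma$-equivariant. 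In this embedding the fibres are \emph{lines}. So $\varphi_H(\mathcal{H}_e)$ is the union of the lines spanned by the divisors of $|F|$, and the induced projective map visibly preserves it.

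If $e=0$, $|N|$ is the pullback under the second projection of $|\mathcal{O}_{\P^1}(a-2)|$, i.e.\ the second ruling followed by a Veronese map onto a rational normal curve. Hence $\sigma$ also normalizes the second ruling, and $\hat\sigma=(\tau_1,\tau_2)$, up to the swap when $a=b$.
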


Note that in this notation $\mathcal{H}_0 = \P^1 \times \P^1$.

\subsection{Restrictions from fixed points}
The number of fixed points of an involution can only take three different values when the curve is geometrically trigonal:

\begin{lemma}\cite[Lemma 3.4]{Schweizer15}
\label{lemma: trigonal_fixed_involution}
Let $C$ be a trigonal curve of genus $g$ and $\sigma$ an involution on $C$.
\begin{enumerate}[(a)]
    \item If $g$ is odd, then $\sigma$ has exactly $4$ fixed points.
    \item If $g$ is even, then $\sigma$ has $2$ or $6$ fixed points.
\end{enumerate}
\end{lemma}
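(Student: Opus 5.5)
The plan is to analyse the trigonal map $f\colon C\to\P^1$ of degree $3$ together with the involution $\sigma$, and to count fixed points via Riemann--Hurwitz applied to suitable quotients. The first step is to show that, at least for $g\ge 5$, $\sigma$ preserves the trigonal series, so $f\circ\sigma=\tau\circ f$ for some $\tau\in\Aut(\P^1)$. For $g\ge 5$ the $g^1_3$ is unique. For $g\le 4$ there may be two $g^1_3$'s, and I would handle these cases separately or check that the claim is meant in the generic situation where the trigonal series is $\sigma$-invariant.

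Next I rule out $\tau=\id$. In that case $\sigma$ would act on each fibre of degree $3$, and $f$ would factor through $C\to C/\langle\sigma\rangle\to\P^1$. That is impossible because $2\nmid 3$. So $\tau$ is an involution of $\P^1$ with exactly two fixed points $p_1,p_2$. Any fixed point of $\sigma$ maps to one of the $p_i$, and each fibre $f^{-1}(p_i)$ is $\sigma$-stable with total multiplicity $3$. This gives an induced degree-$3$ map $\bar f\colon C/\langle\sigma\rangle\to \P^1/\langle\tau\rangle\cong\P^1$.

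The main step, which I expect to be the obstacle, is the local count over each $p_i$. I would analyse the possible ramification structures of a $\sigma$-stable fibre of total degree $3$. If the fibre consists of $3$ distinct points, $\sigma$ fixes exactly one or all three. If it is $2x+y$, then $\sigma$ fixes both $x$ and $y$. If it is $3x$, then $\sigma$ fixes $x$. In every case the number of fixed points over $p_i$ is odd, namely $1$ or $3$, because $\sigma$ acts on a set of odd total multiplicity. Hence $\#\Fix(\sigma)=n_1+n_2$ with $n_i\in\{1,3\}$, so $\#\Fix(\sigma)\in\{2,4,6\}$.

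Finally, Riemann--Hurwitz for involutions forces $\#\Fix(\sigma)\equiv 2g+2 \pmod 4$. If $g$ is odd this gives $\#\Fix(\sigma)\equiv 0\pmod 4$, so $\#\Fix(\sigma)=4$. If $g$ is even it gives $\#\Fix(\sigma)\equiv 2\pmod 4$, so $\#\Fix(\sigma)\in\{2,6\}$. The remaining care is the local claim that a fixed point $x$ with ramification index $2$ is compatible with the involution $\tau$ downstairs. I would check this in local coordinates, where $\sigma$ acts by $-1$ on a uniformizer, so that $f^*$ of the $\tau$-anti-invariant coordinate has even or odd order as appropriate. This settles the parity of the multiplicities of $\sigma$-fixed points in the fibre.
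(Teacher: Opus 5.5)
The paper does not prove this lemma; it quotes it from Schweizer. Judged on its own, your argument works for $g\ge 5$, but there is a genuine gap in genus $3$ and $4$. The lemma is stated for all trigonal curves, so falling back on ``the generic situation'' would change the statement, and your method really does fail there. For $g=4$ with smooth canonical quadric, $\sigma$ can interchange the two $g^1_3$'s (e.g.\ a smooth $(3,3)$-curve invariant under swapping the factors of $\P^1\times\P^1$), and then no $\sigma$-invariant trigonal pencil exists. For $g=3$ the $g^1_3$'s are the pencils $|K_C-P|$, and such a pencil is $\sigma$-invariant only if $\sigma(P)=P$, so you would need a fixed point before you could start. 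The repair is short. Trigonal means gonality exactly $3$, so $C$ is not hyperelliptic and $g(C/\langle\sigma\rangle)\ge 1$. Riemann--Hurwitz then gives $\#\Fix(\sigma)=2g+2-4g(C/\langle\sigma\rangle)\le 2g-2$, together with $\#\Fix(\sigma)\equiv 2g+2\pmod 4$. For $g=4$ this leaves $\{2,6\}$. For $g=3$ it leaves $\{0,4\}$, and $0$ is excluded because $C$ is a smooth plane quartic, so \cref{theorem: smooth_plane_fixed_points} gives exactly $4$ fixed points. Alternatively, a genus-$3$ \'etale double cover of a genus-$2$ curve is hyperelliptic.

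There is also a slip in your fibre count. In the case $f^*(p_i)=2x+y$ you say $\sigma$ fixes $x$ and $y$, which is two fixed points, and then claim the count is odd ``in every case''. Your local check actually shows this case cannot occur, rather than being ``compatible''. Choose $\sigma^*t=-t$ at $x$ and $\tau^*s=-s$ at $p_i$, and write $f^*s=t^e u(t)$ with $u(0)\neq 0$. The identity $\sigma^*f^*s=f^*\tau^*s=-f^*s$ forces $(-1)^e=-1$. So every $\sigma$-fixed point over $p_i$ has ramification index $1$ or $3$, the configuration $2x+y$ is impossible there, and $n_i\in\{1,3\}$. You can also bypass the local analysis entirely. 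Non-fixed points of the $\sigma$-stable fibre come in pairs of equal multiplicity and the total multiplicity is $3$, so $1\le n_i\le 3$. Hence $2\le\#\Fix(\sigma)\le 6$, and the congruence modulo $4$ already yields the lemma.
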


For curves that admit and embedding in either $\P^2$ or $\P^1 \times \P^1$, the number of points fixed by involutions is also restricted to few values:

\begin{theorem}{\cite[Remark 2.1 (i) and Theorem 2.2 with $n=2$]{HKKO10}} \label{theorem: smooth_plane_fixed_points}
Let $C$ be a smooth plane curve of degree $d \ge 4$ and $\sigma$ an involution of $C$. Then the involution $\sigma$ has $\#\Fix_C(\sigma) = d + \frac{1-(-1)^d}{2}$ fixed points.
\end{theorem}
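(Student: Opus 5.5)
The plan is to reduce to a linear involution of $\P^2$ and then count its fixed points on $C$ directly from the equation of $C$.

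\textbf{Step 1: $\sigma$ extends to a linear involution of $\P^2$.} Since $d\ge 4$, the $g^2_d$ cut out by lines is unique on $C$. Indeed, it is $|K_C - (d-4)H|$ by adjunction, where $K_C = (d-3)H$. Hence every automorphism of $C$ preserves $\mathcal{O}_C(1)$ and is induced by an element of $\mathrm{PGL}_3(\C)$. So $\sigma$ is the restriction of some $\tau \in \mathrm{PGL}_3$ with $\tau(C)=C$. Then $\tau^2$ restricts to the identity on $C$. A nonidentity projectivity fixes pointwise only finitely many points and lines, so it cannot fix the irreducible curve $C$ of degree $d\ge 4$ pointwise. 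Hence $\tau^2=\id$, and $\tau \neq \id$.

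\textbf{Step 2: normal form.} A nontrivial involution of $\mathrm{PGL}_3(\C)$ lifts to a diagonalizable matrix with eigenvalues $\pm1$. After a coordinate change and a sign change we may take
\[
\tau(x:y:z) = (x:y:-z).
\]
Its fixed locus in $\P^2$ is the line $L=\{z=0\}$ together with the isolated point $P=(0:0:1)$, and $P\notin L$. Write the defining form as $F=\sum_{i=0}^d z^i F_{d-i}(x,y)$. Invariance of $C$ gives $F(x,y,-z)=\pm F$. The minus sign would force all even powers of $z$ to vanish, so $z \mid F$. This is impossible, because a smooth plane curve is irreducible and $d\ge 4$. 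Hence only even powers of $z$ occur, and $F_d(x,y)=F(x,y,0)$ is nonzero.

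\textbf{Step 3: counting.} The fixed points of $\sigma$ are the points of $C\cap L$, together with $P$ if $P\in C$.

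For the point $P$: it lies on $C$ exactly when the coefficient of $z^d$ vanishes. If $d$ is odd, $z^d$ is an odd power of $z$, so its coefficient is zero and $P\in C$. If $d$ is even, this coefficient must be nonzero. Otherwise every monomial of $F$ has $z$-degree at most $d-2$, so $F$ vanishes to order at least $2$ at $P$, and $C$ would be singular at $P$.

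For $C\cap L$: these are the roots of the binary form $F_d(x,y)$ of degree $d$. This is the step needing care, namely showing the roots are distinct. Suppose $q\in L$ is a multiple root of $F_d$. Then $\partial_x F(q)=\partial_y F(q)=0$, since at $z=0$ these derivatives reduce to those of $F_d$, which vanish at a multiple root. Also $\partial_z F(q)$ equals the coefficient of $z^1$ evaluated at $q$, and this coefficient is zero because $F$ is even in $z$. So $q$ would be a singular point of $C$, a contradiction. Hence $\#(C\cap L)=d$.

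Adding up gives $d$ fixed points when $d$ is even and $d+1$ when $d$ is odd, which is $d+\frac{1-(-1)^d}{2}$.

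The main obstacle is Step 1, the extension of $\sigma$ to $\P^2$. It rests on the uniqueness of the $g^2_d$ on a smooth plane curve of degree $d\ge4$, which is a classical fact I would cite rather than reprove. The rest is the direct computation above.
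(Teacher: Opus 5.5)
Your argument is correct and is essentially the standard proof behind the result the paper cites from \cite{HKKO10}; the paper gives no proof of its own. The route is: automorphisms of a smooth plane curve of degree $d\ge 4$ are induced by $\mathrm{PGL}_3(\C)$, a nontrivial involution of $\P^2$ can be put in the form $(x:y:-z)$, and its fixed points on $C$ are the $d$ distinct points of $C\cap\{z=0\}$ together with the center $(0:0:1)$ exactly when $d$ is odd. The one soft spot is minor: the identity $H=K_C-(d-4)H$ in Step 1 is a tautology and does not by itself show the $g^2_d$ is unique (it only determines $H$ intrinsically when $d=4$), so that step really does rest on the classical uniqueness theorem you say you would cite.
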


\begin{lemma}
\label{lem:fixed-point-obstruction-P1xP1}
Let
\[
C\subset S\coloneq \P^1 \times \P^1
\]
be a smooth curve of bidegree $(a,b)$, where $3\leq a\leq b$, and
let $\iota \in \Aut(C)$ be a nontrivial involution. Set $r(\iota)\coloneq\#\Fix_C(\iota)$. Then $\iota$ extends to an involution
$\widetilde\iota\in\Aut(S)$ preserving $C$, and, up to conjugation by $G \coloneq \operatorname{PGL}_2\times\operatorname{PGL}_2$, the four possibilities for $\widetilde\iota$ and their consequences are
\\
\[
\begin{array}{c|c|c|c}
\text{representative}
& \Fix_S(\widetilde\iota)
& \text{condition forced on }(a,b)
& r(\iota)\\
\hline
(\tau,\tau)
& \text{four points}
& \text{none}
& \leq 4\\
(\tau,1)
& \text{a divisor of type }(2,0)
& a\text{ even}
& 2b\\
(1,\tau)
& \text{a divisor of type }(0,2)
& b\text{ even}
& 2a\\
s
& \text{a divisor of type }(1,1)
& a=b
& a+b=2a,
\end{array}
\]
\\
where $\tau$ is any nontrivial involution of $\mathbb P^1$ and $s$
interchanges the two factors. In the first row, Riemann--Hurwitz further
gives
\[
r(\iota)=
\begin{cases}
0\text{ or }4,&a,b\text{ both even},\\
2,&\text{otherwise}.
\end{cases}
\]
Consequently, if $a<b$, then
\[
r(\iota)\in
\begin{cases}
\{2\},
    & a,b\text{ both odd},\\[2mm]
\{2,2b\},
    & a\text{ even and }b\text{ odd},\\[2mm]
\{2,2a\},
    & a\text{ odd and }b\text{ even},\\[2mm]
\{0,4,2a,2b\},
    & a,b\text{ both even}.
\end{cases}
\]
If $a=b$, then
\[
r(\iota)\in
\begin{cases}
\{2,2a\},&a\text{ odd},\\[2mm]
\{0,4,2a\},&a\text{ even}.
\end{cases}
\]
\end{lemma}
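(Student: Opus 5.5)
First, extend $\iota$ to $S$ using \cref{thm: Takahashi_extension_theorem } with $e=0$. The excluded cases $(e,a)=(1,b),(1,b-1)$ do not occur when $e=0$, and $a,b\ge 3$ holds by hypothesis. So there is $\widetilde\iota\in\Aut(S)$ with $\widetilde\iota(C)=C$ and $\widetilde\iota|_C=\iota$. Since $C$ is not contained in a fibre, it spans $S$ in the sense that an automorphism of $S$ fixing $C$ pointwise is the identity. For instance, $C$ meets a general fibre of either projection in at least $3$ points, and an automorphism of $\P^1$ fixing $3$ points is trivial. Hence $\widetilde\iota^2$, which restricts to $\iota^2=\id$ on $C$, is trivial, and $\widetilde\iota$ is a nontrivial involution.

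Next, classify the involutions of $S$. We have $\Aut(S)=(\operatorname{PGL}_2\times\operatorname{PGL}_2)\rtimes\langle s\rangle$. An involution in the identity component has the form $(g,h)$ with $g^2=h^2=1$, which up to conjugation gives $(\tau,\tau)$, $(\tau,1)$ or $(1,\tau)$. An involution outside the identity component has the form $(g,h)\circ s$ with $hg=1$. Conjugating by $(1,g)$ turns it into $s$. Every nontrivial involution $\tau$ of $\P^1$ has exactly two fixed points, and they are conjugate. This gives the fixed loci listed in the table:
\begin{itemize}
\item $(\tau,\tau)$ fixes four points;
\item $(\tau,1)$ fixes $\{p,q\}\times\P^1$, a divisor of type $(2,0)$;
\item $(1,\tau)$ fixes a divisor of type $(0,2)$;
\item $s$ fixes the diagonal, of type $(1,1)$.
\end{itemize}
An $s$-invariant curve has a symmetric class, so $a=b$ is forced in the last case.

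Now compute $r(\iota)$ in each case. In the three divisorial cases, $C$ is not contained in the fixed divisor, since that divisor is a union of fibres or the diagonal and $C$ has $a,b\ge3$. Therefore $r(\iota)$ is at most the intersection number: $(2,0)\cdot(a,b)=2b$, $(0,2)\cdot(a,b)=2a$, and $(1,1)\cdot(a,b)=a+b$. To get equality, I would argue that the intersections are transverse. At a fixed point $P\in C$, $\widetilde\iota$ acts on $T_PS$ with eigenvalues $(1,-1)$, and $T_PC$ is an invariant line. If $T_PC$ were the $+1$-eigenline, which is tangent to the fixed curve, then $\iota$ would act as the identity on $T_PC$. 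Since $\iota$ is a nontrivial finite-order automorphism of $C$, it acts on $T_PC$ by $-1$. So $T_PC$ is transverse to the fixed divisor, and the count is exact.

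The parity conditions come from invariance of the equation. Take $\widetilde\iota=(\tau,1)$ with $\tau(x)=-x$, and let $F$ be the bihomogeneous equation of $C$, of degree $a$ in $x$. Then $F(-x_0,x_1;y)=\pm F$, so $F$ involves only even or only odd powers of $x_0$. Restricting to the fibre $x_0=0$, where $F$ has a root of multiplicity $2b/(\dots)$, gives a contradiction with smoothness unless $a$ is even. An alternative I would try first: $C\to C/\iota$ is compatible with $\P^1\to\P^1/\tau$ on the first factor, and the degree-$a$ map $C\to\P^1$ forces $2b$ branch points, which is consistent. Concretely, if $a$ were odd, every monomial in $F$ would be divisible by $x_0$ or by $x_1$ in the anti-invariant sense. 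Writing $F=\sum x_0^ix_1^{a-i}f_i(y)$ with $i$ of fixed parity, when $a$ is odd either $i=0$ or $i=a$ is excluded. Say $i=0$ is excluded; then $x_0\mid F$ and $C$ is reducible, a contradiction. The same argument handles $(1,\tau)$. This parity bookkeeping is the step I expect to need most care.

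Finally, for $(\tau,\tau)$, the fixed points of $\iota$ lie among the four points of $\Fix_S(\widetilde\iota)$, so $r(\iota)\le 4$. Riemann--Hurwitz gives $r(\iota)\equiv 2g+2\pmod 4$. With $g=(a-1)(b-1)$, we have $2g+2\equiv 0\pmod 4$ exactly when $g$ is odd, that is, when $a$ and $b$ are both even. In that case $r(\iota)\in\{0,4\}$. Otherwise $2g+2\equiv 2\pmod 4$ and $r(\iota)=2$. Combining the rows under the parity constraints on $a$ and $b$, together with $a=b$ for $s$, gives the final lists. When $a=b$, the values $2b$ and $2a$ coincide with $a+b$.
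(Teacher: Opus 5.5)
Your proposal is correct and follows essentially the same route as the paper. Both arguments use Takahashi's extension, get $\widetilde\iota^2=1$ because an element of $G$ fixing $C$ pointwise is trivial, classify involutions in $G\rtimes\langle s\rangle$, derive transversality from the $(+1,-1)$ eigenvalues on $T_PS$, obtain the parity condition from $F(-x_0,x_1;y)=\pm F$ together with $f_0,f_a\neq 0$, and finish the four-point case with Riemann--Hurwitz mod $4$. The one thing to clean up is the parity paragraph: the first attempt (the multiplicity ``$2b/(\dots)$'' and the branch-point remark) is not an argument, and should be replaced by your final ``concretely'' argument, which is exactly the paper's comparison of the $i=0$ and $i=a$ coefficients.
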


\begin{proof}
By Takahashi's extension \cref{thm: Takahashi_extension_theorem }, the involution $\iota$ extends
to an automorphism $\widetilde\iota\in\Aut(S)$ preserving
$C$ and restricting to $\iota$.

The restriction homomorphism
\[
\Aut(S,C)\longrightarrow\Aut(C)
\]
is injective. Indeed, if $(\alpha,\beta)\in G$ restricts to the identity
on $C$, the surjectivity of the two projections $C\to\mathbb P^1$
implies $\alpha=\beta=1$. An automorphism interchanging the factors
cannot restrict to the identity on $C$, since then $C$ would be
contained in its fixed graph, which has bidegree $(1,1)$. It follows
that $\widetilde\iota^2=1$.

Recall that $\Aut(S)=G\rtimes\langle s\rangle$, where $\langle s\rangle \cong \Z/2\Z$ . If $\widetilde\iota=(\alpha,\beta)\in G$, then
$\alpha^2=\beta^2=1$. Every nontrivial involution of $\mathbb P^1$ is
conjugate to $\tau$, so according as both, only the first, or only the
second of $\alpha,\beta$ are nontrivial, $\widetilde\iota$ is conjugate
under $G$ to $(\tau,\tau)$, $(\tau,1)$, or $(1,\tau)$. If instead
$\widetilde\iota$ interchanges the factors, write
\[
\widetilde\iota=(\alpha,\beta)s.
\]
The relation $\widetilde\iota^2=1$ gives $\beta=\alpha^{-1}$, and
conjugation by an element of $G$ carries $\widetilde\iota$ to $s$.
This proves that there are exactly four conjugacy classes of nontrivial
involutions under $G$.

Their fixed loci are immediate. The fixed locus of $(\tau,\tau)$
consists of four points. The fixed locus of $(\tau,1)$ is the union of
the two fibers over the fixed points of $\tau$, hence is a divisor of
type $(2,0)$; similarly, the fixed locus of $(1,\tau)$ has type $(0,2)$.
Finally, the fixed locus of $s$ is the diagonal, of type $(1,1)$.

We next record the numerical conditions forced by the invariance of
$C$. For $(\tau,1)$, choose coordinates in which
\[
\tau([X_0:X_1])=[-X_0:X_1]
\]
and write an equation of $C$ as
\[
F(X_0,X_1,Y_0,Y_1)
=\sum_{i=0}^{a}X_0^iX_1^{a-i}F_i(Y_0,Y_1).
\]
Invariance gives
\[
F(-X_0,X_1,Y_0,Y_1)=\lambda F(X_0,X_1,Y_0,Y_1)
\]
for some $\lambda\in\{\pm1\}$. Since $C$ has no ruling as a component,
$F_0$ and $F_a$ are both nonzero. Comparing these two terms gives $\lambda=1=(-1)^a$, so $a$ is even. Interchanging the factors shows that $(1,\tau)$ forces
$b$ to be even. In the factor-interchanging case, invariance of the
class $(a,b)$ under the exchange of the two rulings forces $a=b$.
There is no analogous numerical restriction in the $(\tau,\tau)$ case.

For any of the three cases with a fixed divisor $D$, the intersection
of $C$ with $D$ is transverse. Indeed, at a fixed point of a nontrivial
involution of a smooth characteristic-zero curve, the action on the
tangent line is $-1$, while the tangent line to an ambient fixed divisor
is the $+1$-eigenspace. Thus $r(\iota)=C\cdot D$. Using $(a,b)\cdot(c,d)=ad+bc$, we obtain respectively
\[
(a,b)\cdot(2,0)=2b,\qquad
(a,b)\cdot(0,2)=2a,\qquad
(a,a)\cdot(1,1)=2a.
\]

It remains only to refine the four-point case. Let $g'$ be the genus of
$C/\langle\iota\rangle$. Since $g(C)=(a-1)(b-1)$, the
Riemann--Hurwitz formula gives
\[
r(\iota)=2g(C)+2-4g'.
\]
Hence $r(\iota)\equiv2g(C)+2\pmod 4$. Together with
$r(\iota)\leq4$, this yields $r(\iota)\in\{0,4\}$ when $a$ and $b$
are both even, and $r(\iota)=2$ otherwise. Combining the four rows gives
the stated lists.
\end{proof}

\cref{lem:fixed-point-obstruction-P1xP1} has the following immediate corollary:

\begin{lemma} \label{lemma: 2a_obstruction}
Consider a modular or Shimura curve $X_0^D(N)$ of genus $(a-1)(b-1)$ with $a \ne b$ and $\min\{a,b \} \ge 3$. For $ m \parallel DN$ denote $r_m \coloneq \#\Fix_{X_0^D(N)}(w_m)$. If there are distinct Hall divisors $m_1,m_2 \parallel DN$ with $r_{m_1} = r_{m_2} = 2a$, and 
\[
r_{m_1m_2/(m_1,m_2)^2} \ne 2a,
\]
then no smooth embedding into $\P^1 \times \P^1$ of bidegree $(a,b)$ exists. (One may also interchange $a$ and $b$.)
\end{lemma}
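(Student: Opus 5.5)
Suppose for a contradiction that $X \coloneq X_0^D(N)$ admits a smooth embedding $X\subset S=\P^1\times\P^1$ of bidegree $(a,b)$. By symmetry we may assume $a<b$. Both $a,b\ge 3$, so \cref{lem:fixed-point-obstruction-P1xP1} applies to every Atkin--Lehner involution $w_m$. The plan is to use its classification of the extended involutions $\widetilde{w}_m\in\Aut(S)$ to pin down the types of $\widetilde{w}_{m_1}$ and $\widetilde{w}_{m_2}$, then compose them.

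\emph{Step 1: identify the types.} Since $a\neq b$, the factor-swapping row $s$ is excluded. In the remaining rows the fixed-point counts are $\le 4$, $2b$, and $2a$. Because $3\le a<b$, the value $2a\ge 6$ is neither $\le 4$ nor equal to $2b$. Hence $r_{m_i}=2a$ forces $\widetilde{w}_{m_i}$ to be $G$-conjugate to $(1,\tau)$. Concretely, $\widetilde{w}_{m_i}=(1,\beta_i)$ with $\beta_i$ a nontrivial involution of the second factor, and then $b$ is even.

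\emph{Step 2: compose.} The Atkin--Lehner involutions commute, and $w_{m_1}w_{m_2}=w_{m_1m_2/(m_1,m_2)^2}$. The restriction map $\Aut(S,X)\to\Aut(X)$ is injective, as shown in the proof of \cref{lem:fixed-point-obstruction-P1xP1}. Therefore the extension of $w_{m_3}$, where $m_3\coloneq m_1m_2/(m_1,m_2)^2$, is $\widetilde{w}_{m_1}\widetilde{w}_{m_2}=(1,\beta_1\beta_2)$. This element is an involution, because it restricts to one.

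We also need $m_3\ne 1$. If $m_3=1$, then $w_{m_1}=w_{m_2}$ on $X$, since $w_{m_1}w_{m_2}$ would be trivial. This contradicts the distinctness of the Hall divisors, provided one uses that distinct Hall divisors give distinct involutions. I expect this to be the only slightly delicate point; for $D>1$, or for $X_0(N)$ of genus $\ge 2$, this is standard. It can alternatively be ensured directly, since $\beta_1\ne\beta_2$ would follow from $w_{m_1}\ne w_{m_2}$.

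\emph{Step 3: count fixed points.} The composite $(1,\beta_1\beta_2)$ is a nontrivial involution of the type $(1,\tau)$. Its fixed locus is a divisor of type $(0,2)$, and $X$ meets it transversally by the argument in the lemma. So $r_{m_3}=X\cdot(0,2)=2a$, contradicting the hypothesis $r_{m_3}\ne 2a$.

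The case with $a$ and $b$ interchanged, meaning $r_{m_i}=2b$ with $a>b$, is handled identically with the roles of the factors swapped.
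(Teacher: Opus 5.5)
Your proposal is correct and is essentially the paper's proof. The fixed-point classification forces $\widetilde{w}_{m_i}=(1,\tau_i)$, since $a\neq b$ and $2a>4$. Uniqueness of extensions then makes $(1,\tau_1\tau_2)$ the extension of $w_{m_1}w_{m_2}=w_{m_1m_2/(m_1,m_2)^2}$, whose $(0,2)$ fixed divisor gives $2a$ fixed points, a contradiction.

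Two minor remarks. The point you flag, that distinct Hall divisors give distinct involutions so the product is nontrivial, is likewise simply assumed in the paper. Your reduction to $a<b$ is harmless but unnecessary, since $a\neq b$ alone already separates $2a$ from $2b$.
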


\begin{proof}
Suppose that $X_0^D(N)$ admits an embedding in
$S=\mathbb{P}^1\times\mathbb{P}^1$ of bidegree $(a,b)$,
and put $\sigma_i=w_{m_i}$ for $i=1,2$.
By \cref{lem:fixed-point-obstruction-P1xP1}, each $\sigma_i$
extends uniquely to an involution $\widetilde{\sigma}_i$
of $S$, where uniqueness follows from the injectivity
of restriction proved there.

Since $a\neq b$ and $\#\operatorname{Fix}(\sigma_i)=2a>4$,
the classification in that lemma forces
\[
    \widetilde{\sigma}_i=(1,\tau_i),
\]
with $\tau_i$ a nontrivial involution of $\mathbb{P}^1$.
The Atkin--Lehner involutions $\sigma_1,\sigma_2$ are
distinct and commute, so their product is a nontrivial
involution. Its unique extension is $\widetilde{\sigma}_1\widetilde{\sigma}_2
    =(1,\tau_1\tau_2)$.
Applying the same lemma therefore gives $\#\operatorname{Fix}(\sigma_1\sigma_2)=2a$.

But $\sigma_1\sigma_2 =w_{m_1m_2/\gcd(m_1,m_2)^2}$, contradicting the assumed fixed-point count.
\end{proof}

The following proposition will be used to rule out smooth embeddings in $\P^1 \times \P^1$ of bidegree $(a,a)$:
\begin{proposition}\label{prop:balanced-double-quotient}
Let $n\geq 4$ be even, and let $\pi\colon X\longrightarrow Y$ be a degree-two morphism of smooth projective connected curves over $\C$,
with
\[
  g(X)=(n-1)^2,
  \qquad
  g(Y)=\frac{(n-1)(n-2)}{2}.
\]
If $X$ admits an embedding in $\P^1\times\P^1$ of bidegree $(n,n)$,
then at least one of the following holds:
\begin{enumerate}[(a)]
  \item\label{case:balanced-low-degree}
  $Y$ admits a morphism to $\P^1$ of degree $n/2$;
  \item\label{case:balanced-plane}
  $Y$ admits a smooth plane model of degree $n$.
\end{enumerate}
\end{proposition}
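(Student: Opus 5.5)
The plan is to take the deck involution $\iota$ of $\pi$ and compute its number of fixed points by Riemann--Hurwitz. Since $g(X)=(n-1)^2$ and $g(Y)=(n-1)(n-2)/2$, we get
\[
r(\iota)=2g(X)+2-4g(Y)=2(n-1)^2+2-2(n-1)(n-2)=2(n-1)+2=2n.
\]
Now apply \cref{lem:fixed-point-obstruction-P1xP1} with $a=b=n$ even and $n\ge4$. The possible fixed-point counts are $\{0,4,2n\}$, and $2n>4$, so $\iota$ must extend to an involution of $S=\P^1\times\P^1$ conjugate to one of $(\tau,1)$, $(1,\tau)$, or the factor swap $s$. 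I treat the first two cases together and the swap separately.

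\textbf{Cases $(\tau,1)$ or $(1,\tau)$.} By symmetry suppose $\widetilde\iota=(\tau,1)$. Then $\iota$ preserves the fibres of the first projection $p_1\colon X\to\P^1$, which has degree $b=n$. Hence $p_1$ descends to the quotient: there is a map $\bar p\colon \P^1/\langle\tau\rangle\cong\P^1$ with $X\to Y\to\P^1$ equal to $q\circ p_1$, where $q\colon\P^1\to\P^1/\langle\tau\rangle$ has degree $2$.

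That is not directly what we want. A cleaner route uses the second projection instead. Since $\widetilde\iota=(\tau,1)$ acts trivially on the second factor, $p_2\colon X\to\P^1$, of degree $a=n$, is $\iota$-invariant. It therefore factors through $Y$, giving a morphism $Y\to\P^1$ of degree $n/2$, which is case (a). The only thing to check is that the factorization has the right degree, which holds since $\deg\pi=2$.

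\textbf{Case $s$.} Here $\widetilde\iota=s$ is the factor swap, and $Y=X/\langle s\rangle$ embeds in $S/\langle s\rangle=\Sym^2\P^1\cong\P^2$. Under the quotient map $S\to\P^2$, which is the double cover branched along the conic image of the diagonal, the $s$-invariant curve $X$ maps $2{:}1$ onto its image $\bar Y$. Its class pulls back as follows: a line in $\P^2$ pulls back to a $(1,1)$ divisor, so if $\bar Y$ has degree $e$, then $X$, being the full preimage of $\bar Y$, has class $(e,e)$. This gives $e=n$.

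It remains to show $\bar Y$ is smooth, so that $Y\cong\bar Y$ is a smooth plane curve of degree $n$, which is case (b). Away from the branch conic, the quotient map is étale and $X$ is smooth, so $\bar Y$ is smooth there. At the branch conic, $X$ meets the diagonal transversally, as shown in the proof of \cref{lem:fixed-point-obstruction-P1xP1}. In local coordinates $(u,v)$ with $s\colon v\mapsto -v$, the quotient is $(u,w=v^2)$. Transversality means $X$ is locally $u=h(v)$ with $h$ even, that is, $u=\tilde h(v^2)$. Hence $\bar Y$ is locally $u=\tilde h(w)$, which is smooth.

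As a consistency check, the genus matches: $(n-1)(n-2)/2$ is exactly the genus of a plane curve of degree $n$.

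The main obstacle is this local smoothness argument at the ramification points, together with correctly identifying the class $(e,e)$ of the preimage. The factoring argument in the $(\tau,1)$ case is routine.
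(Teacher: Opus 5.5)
Your proof is correct. Its skeleton is the paper's:
\begin{enumerate}
\item Riemann--Hurwitz gives $2n>4$ fixed points.
\item \cref{lem:fixed-point-obstruction-P1xP1} then excludes the type $(\tau,\tau)$.
\item In the factor-preserving case, the invariant ruling descends to a degree-$n/2$ map on $Y$.
\item In the swap case, $Y$ is identified with its image $\bar Y\subset\Sym^2(\P^1)\cong\P^2$, which has degree $n$. Your identity $q^*\bar Y=X$ is equivalent to the paper's projection-formula computation.
\end{enumerate}

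The genuine difference is how you show that $\bar Y$ is smooth. The paper argues numerically: $p_a(\bar Y)=(n-1)(n-2)/2=g(Y)$, and $Y$ is the normalization of $\bar Y$, so the total $\delta$-invariant vanishes. You argue locally instead. Off the branch conic, $q$ is the quotient by a free involution. On the conic, you use transversality of $X$ with the diagonal, linearize $s$ as $v\mapsto -v$, and note that $X$ is a graph $u=h(v)$ with $h$ even; its image $u=\tilde h(v^2)$ is then smooth.

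The paper's route is a one-liner, but it uses the genus hypothesis a second time. Yours uses that hypothesis only to obtain the fixed-point count. It also shows more generally that the quotient of any smooth swap-invariant curve in $\P^1\times\P^1$ is a smooth plane curve, with $g(Y)$ coming out as a consequence rather than an input.

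Two small points of exposition:
\begin{itemize}
\item Off the conic, smoothness of $\bar Y$ comes from the $s$-invariance of $X$: the two local branches over a point are interchanged by $s$, so they have the same image. \'Etaleness alone would not exclude nodes, so say this explicitly.
\item In the $(\tau,1)$ case, delete the abandoned first attempt. There $\bar p$ is written with source $\P^1/\langle\tau\rangle$, where it should be a map $Y\to\P^1/\langle\tau\rangle$.
\end{itemize}
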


\begin{proof}
Suppose such an embedding exists, and identify $X$ with its image in
$S=\P^1\times\P^1$. Let $\sigma$ be the deck involution of $\pi$.
Riemann--Hurwitz gives
\[
  \#\Fix_X(\sigma)
  =2g(X)+2-4g(Y)
  =2n>4.
\]
By \cref{lem:fixed-point-obstruction-P1xP1}, $\sigma$ extends to an involution $\widetilde{\sigma}$ of $S$.

Suppose first that $\widetilde{\sigma}$ preserves the two factors. Write
\[
  \widetilde{\sigma}=(\alpha,\beta).
\]
If both $\alpha$ and $\beta$ are nontrivial, the ambient fixed locus
consists of four points, contradicting the fixed-point count above.
They cannot both be trivial, since $\sigma\neq 1$. Thus exactly one
is trivial.

The corresponding ruling map $X\to\P^1$, of degree $n$, is
$\sigma$-invariant and therefore factors through $\pi$. The induced
map $Y\to\P^1$ has degree $n/2$, proving
(\ref{case:balanced-low-degree}).

Suppose instead that $\widetilde{\sigma}$ exchanges the factors.
After conjugation, it is the standard interchange. Its quotient map is
\[
  q\colon S\longrightarrow\Sym^2(\P^1)\simeq\P^2,
  \qquad
  q^*\mathcal O_{\P^2}(1)\simeq\mathcal O_S(1,1).
\]
Let $B=q(X)$. The restriction $q|_X$ has degree two, and $Y$ is the
normalization of $B$. By the projection formula,
\[
  2\deg B=(1,1)\cdot(n,n)=2n,
\]
so $\deg B=n$. Consequently,
\[
  p_a(B)=\frac{(n-1)(n-2)}{2}=g(Y).
\]
Since $Y$ is the normalization of the integral curve $B$, we obtain
\[
  \sum_{P\in B}\delta_P=p_a(B)-g(Y)=0.
\]
Thus $B$ is smooth and $Y\simeq B$, proving
(\ref{case:balanced-plane}).
\end{proof}

\subsection{Curves over finite fields}
A lower bound of the $\Q$-gonality can be obtained via finite field point counts: 
\begin{lemma}\cite[Lemma 3.5]{NO24} \label{NO-Fp}
Let $C/\Q$ be a curve, $p$ a prime of good reduction for $C$ and $q$ a power of $p$. Suppose $\#C(\F_q) > d(q + 1)$ for some $d$. Then $\gon_{\Q}(C) > d$.
\end{lemma}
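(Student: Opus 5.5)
The statement to prove is \cref{NO-Fp}: if $C/\Q$ has good reduction at $p$ and $\#C(\F_q) > d(q+1)$, then $\gon_{\Q}(C) > d$. I will argue by contradiction, specializing a low-degree map to the reduction and counting points.

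Suppose $f\colon C\to\P^1$ is defined over $\Q$ with degree $e\le d$. Let $\mathcal C/\Z_{(p)}$ be a smooth proper model of $C$; it exists by good reduction. The first step is to extend $f$ to this model and reduce it mod $p$. I will view $f$ as a rational map $\mathcal C\dashrightarrow \P^1_{\Z_{(p)}}$. Because $\mathcal C$ is a regular surface, the indeterminacy locus is a finite set of closed points in the special fibre. So $f$ restricts to a rational map on the special fibre $\widetilde C$, and this extends to a morphism $\widetilde f\colon \widetilde C\to\P^1_{\F_p}$ since $\widetilde C$ is a smooth curve.

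Concretely, I will realize $f$ as the linear system of the divisor $f^*(\infty)$. Take $D$ to be the Zariski closure of $f^*(\infty)$ in $\mathcal C$; this is a relative effective Cartier divisor of degree $e$. Write $f=[s_0:s_1]$ with sections $s_0,s_1$ of $\mathcal O(D)$, scaled by powers of $p$ so that neither reduces identically to zero and they do not reduce to proportional sections. Then $\widetilde f=[\bar s_0:\bar s_1]$, after removing base points. Its degree is at most $\deg \overline D = e$, with equality exactly when there are no base points.

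The step I expect to be the main obstacle is ensuring that $\widetilde f$ is non-constant. I will handle it by the normalization just described. If the reduced sections $\bar s_0,\bar s_1$ were proportional, the map would be constant. In that case I replace $s_1$ by $(s_1-cs_0)/p$ for a suitable unit lift $c$ and renormalize. This process must terminate, because a constant limit would make the two sections differ by an element of $p\,H^0$. That is ruled out by choosing $s_0,s_1$ as part of a saturated rank-2 sublattice of the free $\Z_{(p)}$-module $H^0(\mathcal C,\mathcal O(D))$ spanned by them. Cohomology and base change apply here because $H^0$ is torsion-free, so reduction injects this saturated lattice into $H^0(\widetilde C,\mathcal O(\overline D))$. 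Linear independence is then preserved mod $p$, and $\widetilde f$ is non-constant of degree between $1$ and $e$.

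Finally, since $\widetilde f$ is defined over $\F_p\subseteq\F_q$, every point of $\widetilde C(\F_q)$ maps to a point of $\P^1(\F_q)$. Each fibre has at most $\deg\widetilde f\le d$ points, so
\[
\#C(\F_q)=\#\widetilde C(\F_q)\le d\,\#\P^1(\F_q)=d(q+1),
\]
which contradicts the hypothesis. Hence no such $f$ exists, and $\gon_{\Q}(C)>d$.
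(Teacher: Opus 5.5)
Your proof is correct and is essentially the standard argument behind the cited \cite[Lemma 3.5]{NO24}, for which the paper gives no proof of its own: specialize a degree-$\le d$ map $C\to\P^1$ to a non-constant map $\widetilde f\colon\widetilde C\to\P^1_{\F_p}$ of degree at most $d$, then bound $\#\widetilde C(\F_q)\le\deg\widetilde f\cdot\#\P^1(\F_q)$, which is exactly what the paper does inline for $X_0^{955}(1)$ via the good reduction lemma. The only difference is that you prove the specialization step by hand, using a saturated lattice of sections of $\mathcal O(D)$, instead of quoting that lemma. One small correction: the injectivity of $H^0(\mathcal C,\mathcal O(D))/p\to H^0(\widetilde C,\mathcal O(\overline D))$ comes from $p$ being a nonzerodivisor on the flat sheaf $\mathcal O(D)$ together with the long exact sequence, not from torsion-freeness of $H^0$ as such.
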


\begin{lemma}[Ogg] \label{lemma: Ogg-Fp}
For a prime $p \nmid N$, let
\[
L_p(N) \coloneq \frac{p-1}{12}\psi(N) + 2^{\omega(N)},
\]
where
\[
\psi(N) = N \prod_{q \mid N}\left(1+\frac{1}{q}\right)
\]
and $\omega(N)$ is the number of distinct prime divisors of $N$. Then
\[
L_p(N) \leq \#X_0(N)(\mathbb{F}_{p^2}).
\]
\end{lemma}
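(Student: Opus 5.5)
We prove the inequality $L_p(N)\le\#X_0(N)(\F_{p^2})$ by exhibiting enough $\F_{p^2}$-rational points on the reduction of $X_0(N)$ at $p$. We count two disjoint families separately: the cusps and the supersingular points. Since $p\nmid N$, the curve $X_0(N)$ has good reduction at $p$, and its reduction is the coarse moduli space of pairs $(E,C)$ with $C\subset E$ cyclic of order $N$. Note that ordinary points play no role in the bound, so we may simply discard them.

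\emph{Cusps.} The cusps of $X_0(N)$ are indexed by divisors $d\mid N$, with $\varphi(\gcd(d,N/d))$ cusps for each $d$. They are defined over $\Q(\zeta_{\gcd(d,N/d)})$. To get the term $2^{\omega(N)}$ we do not need every cusp to be rational over $\F_{p^2}$. It suffices to use the cusps attached to the divisors $d$ with $\gcd(d,N/d)=1$, i.e.\ the Hall divisors of $N$. Each such cusp is $\Q$-rational, there are exactly $2^{\omega(N)}$ of them, and they reduce to distinct points of $X_0(N)_{\F_p}$ because the cuspidal subscheme is étale over $\Z[1/N]$. This contributes at least $2^{\omega(N)}$ points.

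\emph{Supersingular points.} These supply the main term $\frac{p-1}{12}\psi(N)$.
\begin{enumerate}
\item Every supersingular $j$-invariant lies in $\F_{p^2}$. Moreover, each supersingular curve has a model $E/\F_{p^2}$ whose $p^2$-Frobenius equals $[-p]$ (or $[p]$).
\item With this model, Frobenius acts as a scalar on $E[N]$. Hence every cyclic subgroup $C$ of order $N$ is $\F_{p^2}$-rational, and so is every point of $X_0(N)$ lying over a supersingular $j$.
\item The number of such points, counted with weight $1/|\Aut(E,C)/\pm1|$, equals $\psi(N)$ times the Eichler mass formula $\sum 1/|\Aut(E)/\pm1| = (p-1)/24$. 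Here $\psi(N)$ is the number of cyclic subgroups of order $N$.
\item Since $\Aut(E,C)/\pm1$ is trivial or small, the unweighted count is at least the weighted count. The weighted count is $\psi(N)(p-1)/24$ under the $\pm1$ normalization. Written instead as $\sum_E 1/|\Aut(E)|$, the mass formula gives $\frac{p-1}{24}$; weighting by the full $\Aut(E)$ then yields $\frac{p-1}{12}\cdot\frac12\cdot 2$.
\end{enumerate}

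The main obstacle is the normalization in step (iii), so that the weighted count comes out exactly $\frac{p-1}{12}\psi(N)$. We take the Eichler--Deuring mass formula in the form $\sum_{E\ \text{ss}}\frac{2}{|\Aut E|}=\frac{p-1}{12}$. Each geometric point $(E,C)$ is then counted at least with weight $\frac{2}{|\Aut(E,C)|}\le 1$, since $\pm1\subseteq\Aut(E,C)$. Summing over the $\psi(N)$ choices of $C$ gives at least $\frac{p-1}{12}\psi(N)$ distinct points. All of these are supersingular and therefore distinct from the cusps, and adding the two families gives the stated lower bound. Alternatively, this is exactly Ogg's argument in ``Hyperelliptic modular curves,'' which could be cited directly.
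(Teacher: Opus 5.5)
Your argument is correct and is Ogg's standard argument. It counts the $2^{\omega(N)}$ rational cusps attached to Hall divisors plus the supersingular points, all of which are $\F_{p^2}$-rational via a model with $p^2$-Frobenius $\pm p$. The paper gives no proof of its own, only the attribution to Ogg, so there is no different route to compare against.

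One thing needs fixing: the bookkeeping in the third and fourth steps of your supersingular count. There $\sum_E 1/|\Aut(E)/\{\pm1\}|$ equals $\frac{p-1}{12}$, not $\frac{p-1}{24}$, and as written those steps would yield only half the main term. Delete them in favour of your final paragraph. There, orbit--stabilizer with $\pm1\subseteq\Aut(E,C)$ gives at least $2\psi(N)/|\Aut(E)|$ points over each supersingular $j(E)$. Summing against $\sum_E 1/|\Aut(E)|=\frac{p-1}{24}$ then gives exactly $\frac{p-1}{12}\psi(N)$.
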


\begin{remark}\label{remark : prime_gonality}
If $C/\Q$ is a curve with $C(\Q) \ne \emptyset$, $g(C) > (p-1)^2$, and $\gon_{\C}(C) = p$ is prime, then the Tower \cref{thm: tower-theorem} implies that $\gon_{\Q}(C) = p$.
\end{remark}

\begin{lemma}[Good reduction lemma, see {\cite[Lemma~1.8]{HS99a}}] \label{lemma : good_reduction_lemma}
Let $C_1$ and $C_2$ be smooth projective geometrically irreducible
curves defined over $\Q$, and let
\[
    f:C_1\longrightarrow C_2
\]
be a finite morphism of degree $d$ defined over $\Q$.
Assume that $C_1$ has good reduction at a prime $p$.

\begin{enumerate}
    \item[(i)]
    If $g(C_2)\geq 1$, then $C_2$ has good reduction at $p$, and
    $f$ induces a finite morphism
    \[
        \widetilde f:
        \widetilde C_1\longrightarrow \widetilde C_2
    \]
    of degree $d$ over $\F_p$, where $\widetilde C_i$
    denotes the reduction of $C_i$ at $p$.

    \item[(ii)]
    If $g(C_2)=0$, then there exists a finite morphism
    \[
        \widetilde f^{\,\prime}:
        \widetilde C_1\longrightarrow \widetilde C_2^{\,\prime}
    \]
    of degree $d'\leq d$ over $\F_p$, where
    $\widetilde C_2^{\,\prime}$ is a smooth rational curve defined
    over $\F_p$.
\end{enumerate}
\end{lemma}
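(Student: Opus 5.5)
The statement to prove is the Good reduction lemma, cited from Harris--Silverman. My plan is to use the Néron model and the minimal regular model to reduce to standard facts about Jacobians and specialization of maps.

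\textbf{Part (i), good reduction of the target.} Suppose $g(C_2)\ge 1$. The morphism $f$ induces a surjection of Jacobians $f_*\colon J(C_1)\to J(C_2)$. Since $C_1$ has good reduction at $p$, $J(C_1)$ has good reduction at $p$. A quotient of an abelian variety with good reduction again has good reduction, for instance via the Néron--Ogg--Shafarevich criterion applied to the Tate module. Hence $J(C_2)$ has good reduction at $p$.

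This alone does not give good reduction of $C_2$ itself. For that I would argue with stable models, passing if necessary to a finite extension of $\Q_p$ over which $C_2$ acquires semistable reduction. Let $\mathcal{C}_1$ be the smooth model of $C_1$ over $\Z_p$. Take the normalization of the stable model of $C_2$ in the function field of $C_1$, and compare it with $\mathcal{C}_1$ after blowing up. The map on special fibres sends the irreducible curve $\widetilde C_1$ onto some component, so exactly one component of the special fibre of $C_2$'s stable model is dominated. I would then show there are no other components:

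\begin{itemize}
\item Every other component of positive genus, and every loop, would contribute to the abelian or toric part of $J(C_2)$ at $p$.
\item Good reduction of $J(C_2)$ forces the dual graph to be a tree, with all other components rational.
\item Stability then rules out any remaining rational tails, so the stable reduction is a smooth curve.
\end{itemize}

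The main obstacle is descending this smooth stable model from the extension back to $\Z_p$. I would handle it by observing that the dominated component is a quotient of $\widetilde C_1$ by the specialization of the function-field inclusion, and is therefore defined over $\F_p$. Alternatively, I would cite the standard result that a curve of genus $\ge 1$ dominated by a curve with good reduction has good reduction; this is exactly the content of \cite{HS99a}. Once both curves have smooth proper models, $f$ extends to $\mathcal{C}_1\to\mathcal{C}_2$ by properness together with the fact that rational maps from a regular surface into a smooth relative curve of genus $\ge 1$ extend, since there are no rational curves to blow down. Its degree is preserved, because degree is locally constant in flat families.

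\textbf{Part (ii), genus zero target.} Now $C_2\cong\P^1$ over $\Q$, because $C_2$ has a rational point: the image of one would do, or I would simply choose a model. The map $f$ is then a rational function $h$ of degree $d$. Consider $f$ as a rational map $\mathcal{C}_1\dashrightarrow\P^1_{\Z_p}$. Scale $h$ by a power of $p$ so that it is regular and not identically zero or $\infty$ along the special fibre. Its restriction to $\widetilde C_1$ is then a nonconstant rational function of degree $d'\le d$, since the degree of the polar divisor can only drop under specialization. If the scaled reduction happens to be constant, I would replace $h$ by $(h-c)/p^k$ for a suitable constant $c$; this process terminates because the valuation data is finite. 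This yields a morphism $\widetilde C_1\to\P^1_{\F_p}$ of degree $d'\le d$, as required.
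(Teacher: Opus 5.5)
The paper does not prove this lemma; it only quotes \cite[Lemma~1.8]{HS99a}. Judged on its own, your outline has genuine gaps: two in (i) and one in (ii).

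\textbf{Part (i).} Your inference that ``good reduction of $J(C_2)$ forces the dual graph to be a tree, with all other components rational'' does not follow. Good reduction of the Jacobian only rules out a toric part. It is compatible with a stable reduction made of several positive-genus curves glued in a tree: a genus-$2$ curve whose stable reduction is two elliptic curves meeting in a point has a Jacobian with good reduction.

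What rules out extra non-rational components is the domination by $\widetilde C_1$, which you set up but never use. Since $g(C_1)\ge 1$, the model $\mathcal C_1\otimes R_L$ is the minimal regular model of $C_1$. Let $\mathcal X'$ be the normalization of the stable model $\mathcal Y$ in $L(C_1)$. Every component of $\mathcal X'_s$ whose valuation is not that of $\widetilde C_1$ is exceptional for the birational map $\mathcal X'\dashrightarrow\mathcal C_1\otimes R_L$, hence rational. Therefore every component of $\mathcal Y_s$ other than the one dominated by $\widetilde C_1$ is rational. Only at this point do toric rank $0$ and stability force $\mathcal Y_s$ to be a single smooth curve.

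Your descent step also fails. That component being ``defined over $\F_p$'' does not produce a smooth model over $\Z_p$. For example, with $p$ odd, the ramified twist $py^2=F(x)$ of a hyperelliptic curve $y^2=F(x)$ with good reduction has smooth reduction over $L$ defined over $\F_p$, yet bad reduction over $\Q_p$.

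The missing point is that the restriction $w$ to $\Q_p(C_2)$ of the valuation $v$ attached to $\widetilde C_1$ satisfies $w(p)=v(p)=1$. Consequently, the normal model of $C_2$ over $\Z_p$ whose special fibre is the single component attached to $w$ has reduced, geometrically integral special fibre. Its base change to $R_L$ is normal with the same unique valuation, so it equals $\mathcal Y$, and smoothness descends. Alternatively, show that inertia acts trivially on $\mathcal Y_s$, since it is dominated equivariantly by $\widetilde C_1\otimes\overline{\F}_p$, and invoke the corresponding criterion.

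Your fallback of citing ``the standard result'' just cites the statement itself. The case $g(C_2)=1$, where your stable model is not available, also needs its own argument, for instance via $J(C_2)$ and points of $C_2$ over unramified extensions.

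\textbf{Part (ii).} The claim ``$C_2\cong\P^1$ over $\Q$'' is unjustified and false in general. No rational point on $C_1$ is assumed. In the proof for $D\in\{955,1149\}$ the lemma is applied with $C_1=X_0^{D}(1)$, which has no $\Q$-points, so nothing prevents $C_2$ from being a pointless conic.

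What you need, and what is true, is $C_2(\Q_p)\neq\varnothing$. By the Weil bound, $\widetilde C_1(\F_{p^n})\neq\varnothing$ for all large $n$. By Hensel, $C_1$, and hence $C_2$, then has a point over an unramified extension of odd degree, and a conic with a point of odd degree has a rational point. Alternatively, $\kappa(w)\subset\kappa(v)$ is a genus-$0$ function field with constant field $\F_p$, hence isomorphic to $\F_p(t)$, and $[\kappa(v):\kappa(w)]\le d$ gives the required map directly.

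Once $C_2\otimes\Q_p\cong\P^1$, your renormalization procedure does terminate, but you should give the actual reason. The integers $v(h-c)$, $c\in\Q_p$, strictly increase along the process and are bounded above; otherwise the constants $c$ would converge to some $c\in\Q_p$ with $h=c$.
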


The following lemma establishes the case $\gon_{\C}(X_0^D(N)) \le 2$, so we can later focus our attention to the case $\gon_{\C}(X_0^D(N)) \ge 3$:

\begin{lemma}\label{lem:low-gonality-embedding}
Let $C$ be a smooth projective connected curve over $\mathbb C$ of
genus $g$. If $\operatorname{gon}_{\mathbb C}(C)\leq 2$, then $C$ admits
a closed immersion into $\mathbb P^1\times\mathbb P^1$.
More precisely, one may choose the image to have bidegree $(1,1)$
if $g=0$, and bidegree $(2,g+1)$ if $g\geq 1$.
\end{lemma}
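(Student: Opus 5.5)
The plan is to build the embedding directly as a product map $C\to\P^1\times\P^1$. The first factor will be the degree-$2$ map $f$ (or an isomorphism when $g=0$), and the second factor a map $h$ of degree $g+1$ chosen to separate points and tangents in the fibres of $f$. The image will then have bidegree $(\deg h,\deg f)$, or $(\deg f,\deg h)$ after swapping factors.

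\emph{Genus $0$.} Here $C\simeq\P^1$, and the diagonal embedding $x\mapsto(x,x)$ has as image the diagonal, a smooth curve of bidegree $(1,1)$.

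\emph{Genus $g\geq1$.} Fix a degree-$2$ map $f\colon C\to\P^1$; for $g=1$ take $f$ given by $|2P|$. Let $\sigma$ be the associated involution, which is the hyperelliptic involution when $g\ge2$. A morphism $(f,h)\colon C\to\P^1\times\P^1$ is a closed immersion as soon as $h$ separates the two points of every fibre of $f$ and is unramified at the ramification points of $f$.

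To produce $h$, use the standard model
\[
y^2=p(x), \qquad \deg p\in\{2g+1,2g+2\}.
\]
Take $h=y/q(x)$, with $q$ a polynomial of degree $g+1$ coprime to $p$ and with distinct roots. Equivalently, we need a degree-$(g+1)$ line bundle $L$ with $\sigma^*L\not\simeq L$-type behaviour. Concretely, $h$ is anti-invariant: $h\circ\sigma=-h$.

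We check separation and unramifiedness on each fibre.
\begin{itemize}
\item On a fibre $\{P,\sigma P\}$ with $P\ne\sigma P$, we have $h(\sigma P)=-h(P)$. So $h$ separates the two points unless $h(P)\in\{0,\infty\}$. That happens only over the roots of $p$, which are ramification points, or over the roots of $q$, where $h=\infty$ at both points. At the roots of $q$, however, $y\neq0$, so $1/h$ takes opposite nonzero values near the two points; hence $1/h$ has simple zeros at both points, and separation follows from local coordinates. More cleanly, $(f,h)$ restricted to such a fibre takes the same value $(x_0,\infty)$ at both points. So I would instead choose $h=(y+r(x))/q(x)$ with $\deg r\le g+1$ generic, so that the values of $h$ at the two points, $(\pm y+r)/q$, differ whenever $y\neq0$. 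Near a root of $q$ the two points then have poles with different leading coefficients. To avoid this subtlety altogether, take $q$ to have its roots at branch points of $f$ if needed, or argue via the divisor of poles.
\item At a ramification point, $y$ is a local parameter and $x$ has order $2$, so $h=y/q+r/q$ has nonzero derivative if $q(x_0)\neq0$.
\end{itemize}

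The degree of $h$ is the number of poles: over the $g+1$ roots of $q$ there are two points each, giving $2g+2$. That is too many, so the degree should be computed more carefully.

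A better choice is $h=(y-s(x))/\prod_{i=1}^{g+1}(x-a_i)$, where $s$ interpolates $y$ at the points $P_i=(a_i,b_i)$, so that the numerator vanishes at $P_i$ but not at $\sigma P_i$. The poles are then exactly the $g+1$ points $\sigma P_i$, together with possible contributions at infinity, which are controlled by $\deg s\le g$ when $\deg p=2g+2$. This gives $\deg h=g+1$. Separation on fibres over the $a_i$ holds because $h(P_i)$ is finite while $h(\sigma P_i)=\infty$. Elsewhere, $h(P)-h(\sigma P)=2y/q\neq0$ off the branch points, and at a branch point the derivative is $1/q\neq0$.

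Finally, the image is smooth of bidegree $(g+1,2)$, which is $(2,g+1)$ after swapping the factors. As a consistency check, its arithmetic genus is $(2-1)(g+1-1)=g$, matching $g(C)$, so the birational image has no singularities.

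The main obstacle is arranging that $h$ has degree exactly $g+1$ and is unramified at infinity. I would handle this by choosing coordinates with $\deg p=2g+2$, so that there are two points at infinity not fixed by $\sigma$, and generic $a_i$.
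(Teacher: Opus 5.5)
Your final construction is correct, but it certifies the embedding differently from the paper. The paper never writes $h$ down. It takes a general $L\in\operatorname{Pic}^{g+1}(C)$ that is nonspecial, base-point-free and not of the form $f^*M$, justified by dimension counts in $\operatorname{Pic}^{g+1}(C)$. It then observes that $(h,f)$ has degree dividing $2$ onto its image and cannot have degree $2$, since otherwise $h$ factors through $f$ and $L$ is a pullback. Smoothness then comes for free: a birational image of bidegree $(2,g+1)$ has $p_a=g=g(C)$, so it is smooth.

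You instead work on the model $y^2=p(x)$ with $\deg p=2g+2$. You take the explicit function $h=(y-s(x))/\prod_i(x-a_i)$, whose polar divisor is $\sigma P_1+\cdots+\sigma P_{g+1}$, and you check separation of points and of tangent vectors fibre by fibre. Your route is completely elementary and even produces an equation: writing $q=\prod_i(x-a_i)$ and $s^2-p=qt$, the image is $q(x)v^2+2s(x)v+t(x)=0$, visibly of bidegree $(g+1,2)$. The paper's route avoids all local computation and shows that any degree-$(g+1)$ map not factoring through $f$ works.

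In fact your local checks are redundant. The identity $h-h\circ\sigma=2y/q\neq0$ already shows that $h$ does not factor through $f$, so $(f,h)$ is birational onto its image. The genus count you list as a ``consistency check'' is then itself a proof, exactly as in the paper.

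For the write-up:
\begin{itemize}
\item delete the abandoned attempts $h=y/q$ and $h=(y+r)/q$, which generally have $2g+2$ poles, and the vague remark about $\sigma^*L$;
\item state the conditions on the $a_i$: distinct and not roots of $p$, so that $b_i\neq0$;
\item treat the fibre over $x=\infty$ explicitly: $h(\infty_\pm)=\pm c$, where $c^2$ is the leading coefficient of $p$.
\end{itemize}
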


\begin{proof}
If $g=0$, then $C\simeq\mathbb P^1$, and the diagonal gives the
required embedding. Suppose that $g\geq 1$, and choose a morphism
$f\colon C\to\mathbb P^1$ of degree two.

We first choose a nonspecial, base-point-free line bundle $L$ of
degree $g+1$ that is not pulled back from $\mathbb P^1$ under $f$.
Such a choice is possible. Indeed, if $g=1$, every line bundle of
degree two is nonspecial and base-point-free. If $g\geq 2$, the
special locus in $\operatorname{Pic}^{g+1}(C)$ is empty for $g=2$
and has dimension at most $g-3$ otherwise, since
$\omega_C\otimes L^{-1}$ must be effective of degree $g-3$.
Moreover, if a nonspecial line bundle $L$ of degree $g+1$ has a
base point $P$, then Riemann--Roch and Serre duality give
\[
  L\simeq\omega_C\otimes\mathcal O_C(P-E)
\]
for some effective divisor $E$ of degree $g-2$. Such line bundles
therefore lie in a locus of dimension at most $g-1$.
Thus a general line bundle of degree $g+1$ is nonspecial and
base-point-free. Finally, the condition $L\simeq f^*M$ excludes
at most one point of $\operatorname{Pic}^{g+1}(C)$: it is possible
only when $g+1$ is even, in which case
$M\simeq\mathcal O_{\mathbb P^1}((g+1)/2)$.
Since $\dim\operatorname{Pic}^{g+1}(C)=g\geq 1$, the required
choice of $L$ exists.

By Riemann--Roch, $h^0(C,L)=2$, so the complete linear system
$|L|$ defines a morphism $h\colon C\to\mathbb P^1$ of degree
$g+1$. Consider
\[
  \varphi=(h,f)\colon C\longrightarrow\mathbb P^1\times\mathbb P^1,
\]
and let $\Gamma$ be its image with the reduced structure.
The degree of $\varphi$ onto $\Gamma$ divides $\deg(f)=2$.
If it were two, the second projection would induce a degree-one
morphism from the normalization of $\Gamma$ to $\mathbb P^1$.
Consequently, $h$ would factor through $f$, which would make
$L=h^*\mathcal O_{\mathbb P^1}(1)$ a pullback under $f$, contrary
to its choice. Hence $\varphi$ is birational onto $\Gamma$.

It follows that $\Gamma$ has bidegree $(2,g+1)$, and therefore
\[
  p_a(\Gamma)=(2-1)((g+1)-1)=g.
\]
Since $C$ is the normalization of $\Gamma$, equality of its genus
with $p_a(\Gamma)$ implies that $\Gamma$ is smooth. Thus
$\varphi\colon C\to\Gamma$ is an isomorphism, and $\varphi$ is
the required closed immersion.
\end{proof}

\section{Modular Curves}

Let $X \subset \P^1 \times \P^1$ be a modular curve of bidegree $(a,b)$. Without loss of generality assume that $2 \le a \le b$. Then 
\[
a = \gon_{\C} X  \ge \frac{1}{2} \cdot \frac{975((a-1)(b-1)-1)}{4096},
\]

and hence

\[
2 \le a \le b \le \frac{9167a}{975 (a - 1)} \le 18.
\]

If $a = 2$ then $X_0(N)$ is either elliptic or hyperelliptic. For $a \ge 3$, we need to check the following bidegrees:

\begin{itemize}
    \item $a = 3, \quad 3 \le b \le 14$,
    \item $a = 4, \quad 4 \le b \le 12$,
    \item $a \in \{ 5,6 \}, \quad a \le b \le 11$,
    \item $a \in \{ 7,8,9,10 \}, \quad a \le b \le 10$.
\end{itemize}

\begin{theorem}\cite[Theorem 3.3]{HS99a}
The modular curve $X_0(N)$ is a non-sub-hyperelliptic trigonal curve if and only if

\begin{align*}
  &\begin{aligned}
    \mathllap{N = 34,43,45,64} & \qquad (g = 3),
  \end{aligned} \\
  &\begin{aligned}
    \mathllap{N = 38, 44, 53, 54, 61, 81} & \qquad (g = 4).
  \end{aligned}
\end{align*}
    
\end{theorem}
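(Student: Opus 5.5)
This theorem is quoted from Hasegawa--Shimura \cite[Theorem 3.3]{HS99a}, so in the paper it is cited rather than reproved. If I were to prove it, I would follow their strategy, which splits by genus.

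\emph{Setting up.} A non-hyperelliptic trigonal curve has genus $g\ge 3$. For such curves:
\begin{itemize}
\item every genus $3$ or $4$ curve is trigonal;
\item when $g\ge 5$, trigonality is detected by the canonical model lying on a rational normal scroll (Petri), or equivalently by the existence of a $g^1_3$.
\end{itemize}
So the first step is to bound $N$. I would use \cref{theorem: index_gonality_inequality} with gonality $3$, which gives $[\mathrm{PSL}_2(\Z):\Gamma_0(N)]=\psi(N)\le 3\cdot 24\cdot 4096/975<303$. This leaves a finite list of $N$. For each $N$ in the list I compute the genus of $X_0(N)$ and keep those with $g\ge 3$.

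\emph{Genus $3$ and $4$.} Every such curve is trigonal, so here the only question is hyperellipticity. I would remove the hyperelliptic levels using Ogg's classification of hyperelliptic $X_0(N)$. What remains should be exactly $N=34,43,45,64$ in genus $3$ and $N=38,44,53,54,61,81$ in genus $4$.

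\emph{Genus at least $5$.} Here I must show that no candidate is trigonal.
\begin{itemize}
\item For most levels, a point count suffices. Take a prime $p\nmid N$. By \cref{remark : prime_gonality}, or \cref{corollary: map_over_k} since $3$ is prime, a geometric trigonal map forces a $\Q$-rational degree $3$ map whenever $g>4$; the cusp $\infty$ is a rational point. This is the step where the hypothesis $g\ge 5$ is used. A degree $3$ map over $\Q$ reduces to a map of degree at most $3$ modulo $p$, by \cref{lemma : good_reduction_lemma}. Then \cref{NO-Fp} gives a contradiction as soon as $\#X_0(N)(\F_{p^2})>3(p^2+1)$. I would check this using the lower bound $L_p(N)$ from \cref{lemma: Ogg-Fp}, which counts supersingular points and cusps, at $p=2$ or $p=3$.
\item For the small number of levels where the count is too weak, I would use a quotient instead. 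Castelnuovo--Severi (\cref{theorem: CS}) applied to the trigonal map and the Atkin--Lehner quotient $X_0(N)\to X_0(N)/w_d$ forces $g\le 2g(X_0(N)/w_d)+2$. Here the two maps cannot factor through a common map, since $\gcd(3,2)=1$. Alternatively, I would use \cref{lemma: trigonal_fixed_involution}: compute the number of fixed points of $w_d$ via \cref{thm: Ogg_fixed_pts} and check that it is not $4$ (for odd $g$), respectively not $2$ or $6$ (for even $g$).
\end{itemize}

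\emph{Main obstacle.} I expect the hardest part to be the residual levels of genus $5$ or $6$. There the point-count margin is thin and the Castelnuovo--Severi bound may fail. For these I would try, in order:
\begin{enumerate}
\item counting points over $\F_{p^k}$ for larger $k$, computed from the Hecke eigenforms;
\item using the fixed-point restriction of \cref{lemma: trigonal_fixed_involution};
\item as a last resort, computing the canonical model and checking that its ideal is generated by quadrics (Petri), which rules out trigonality.
\end{enumerate}
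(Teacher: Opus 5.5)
Your plan is correct and follows the route of the cited source. The paper does not prove this statement; it only quotes \cite[Theorem 3.3]{HS99a}. Your reduction to finitely many levels, Ogg's hyperelliptic list in genus $3$ and $4$, and, in genus $\ge 5$, descent of the trigonal map to $\Q$ followed by $\F_{p^2}$-point counts, Castelnuovo--Severi with Atkin--Lehner quotients and Petri's criterion together give a valid way to carry it out.

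Do recalibrate where the work lies. Ogg's lower bound $L_p(N)$ by itself fails for dozens of levels, not a small residue. For example, at the non-hyperelliptic genus-$\ge 5$ levels $N=57,65,73,97,151$ one has $L_p(N)\le 3(p^2+1)$ for every prime $p\nmid N$, so most levels fall to your fallbacks. Two points to watch there:
\begin{enumerate}
\item In genus $6$, a canonical ideal not generated by quadrics could also come from a smooth plane quintic, not only from a trigonal curve.
\item \cref{thm: Ogg_fixed_pts} is stated only for $D>1$. For $X_0(N)$ you need Ogg's original fixed-point counts, which for $w_4$ also include cusps.
\end{enumerate}
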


Let $C$ be a smooth non-hyperelliptic curve of genus $4$ over an
algebraically closed field of characteristic zero. The canonical model $C \hookrightarrow \mathbf P^3$ is the complete intersection of a unique quadric $Q$ and a cubic
surface. Every $g^1_3$ on $C$ is cut out by a ruling of $Q$. Consequently,
if $Q$ is nonsingular, its two rulings induce two distinct $g^1_3$'s on
$C$; if $Q$ is a quadric cone, its unique ruling induces the unique
$g^1_3$ on $C$
\cite[Chapter~III, \S3]{ACGH85}.

\begin{proposition} For $N \in \{38,44,53,54,61 \}$, the modular curve $X_0(N)$ embeds in $\P^1 \times \P^1$ with bidegree $(3,3)$, while
$X_0(81)$ does not embed in $\P^1 \times \P^1$ with bidegree $(3,3)$.

\end{proposition}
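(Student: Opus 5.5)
The argument rests on the genus-$4$ facts recalled just before the statement. A smooth non-hyperelliptic genus $4$ curve $C$ embeds in $\P^1\times\P^1$ with bidegree $(3,3)$ if and only if the unique quadric $Q$ containing its canonical model is nonsingular.

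For the forward direction, suppose $Q$ is smooth. Then $Q\simeq\P^1\times\P^1$, and $C\subset Q$ is cut out by a cubic. Since $\mathcal O_Q(1)=\mathcal O(1,1)$, $C$ is a smooth curve of bidegree $(3,3)$ on $Q$.

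For the converse, suppose $C\hookrightarrow\P^1\times\P^1$ with bidegree $(3,3)$. Adjunction gives $K_C=\mathcal O(1,1)|_C$. The Segre embedding of $\P^1\times\P^1$ into $\P^3$ therefore restricts to the canonical embedding of $C$, and places it on a smooth quadric. By uniqueness that quadric is $Q$, which must then be smooth. Equivalently, $C$ has two distinct $g^1_3$'s, given by the two projections.

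Conversely, if $Q$ is a cone, $C$ has a unique $g^1_3$. No embedding can exist, since its two rulings would give two independent degree-$3$ maps. (Bidegree $(2,b)$ is excluded because the curves are not hyperelliptic.) The proposition therefore reduces to deciding, for each $N\in\{38,44,53,54,61,81\}$, whether the quadric in the canonical ideal of $X_0(N)$ has rank $4$ or rank $3$.

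The plan is computational. For each $N$ I would compute a basis of $S_2(\Gamma_0(N))$, consisting of $4$ cusp forms, as $q$-expansions to sufficient precision. I would then find the unique quadratic relation $Q(f_1,\dots,f_4)=0$ among them; this is the canonical model computation in Magma used in \cite{Repo}. Next, I would compute the rank of the symmetric $4\times4$ matrix of $Q$, which by $q$-expansion precision (Sturm bound) is a rigorous identity over $\Q$. Rank $4$ means $Q$ is smooth over $\C$. In that case I would exhibit the isomorphism $Q\simeq\P^1\times\P^1$ explicitly, perhaps after a quadratic extension, by diagonalizing and factoring $Q$ as $xy-zw$. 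Substituting the Segre parametrization into the cubic then yields the $(3,3)$ forms recorded in \cref{tab: P1twice_models}. For $N=81$ I would show the rank is $3$, so the quadric is a cone and there is no embedding.

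A nice theoretical cross-check for $N=81$ is available. If the two $g^1_3$'s existed, the Atkin--Lehner involution $w_{81}$ would extend by \cref{lem:fixed-point-obstruction-P1xP1}, and one could test compatibility with fixed-point counts. I would nonetheless rely on the rank computation as the decisive argument.

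I expect the main obstacle to be the rigorous verification of the quadric's rank. In particular, the degenerate case $N=81$ must be shown to be exactly rank $3$ rather than just appearing so numerically, and this needs an exact relation over $\Q$ proved via a Sturm-type bound on weight-$4$ forms. Producing clean explicit bihomogeneous equations may also require a field extension splitting the quadric's discriminant, but that is only cosmetic for the existence statement over $\C$.
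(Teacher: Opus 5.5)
Your proposal is correct and is essentially the paper's argument: the paper also reduces the question to whether the unique quadric containing the canonical model of $X_0(N)$ has rank $4$ (so it is $\P^1\times\P^1$ and the curve is a $(3,3)$ divisor on it) or rank $3$ (a cone, hence a unique $g^1_3$, which is incompatible with two distinct degree-$3$ rulings), and it checks this for each $N$. The only differences are that the paper reads the quadrics directly off Galbraith's published canonical models instead of recomputing them from $q$-expansions with a Sturm-bound check, and it takes the equivalence from the standard genus-$4$ facts rather than proving it via adjunction as you do.
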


\begin{proof}
The canonical models for $X_0(N)$ are taken from \cite{Galbraith96}. 

\begin{table}[htbp]
\centering
\small
\renewcommand{\arraystretch}{1.7}
\setlength{\tabcolsep}{8pt}
\begin{tabular}{c|p{0.82\textwidth}}
\hline
$N$ & Defining equations of $X_0(N)\subset\mathbf P^3_{[w:x:y:z]}$ \\
\hline

$38$
&
\[
\begin{aligned}
Q_{38}:&\quad
w^2+16wx+10x^2-18y^2-9z^2=0,\\
F_{38}:&\quad
2w^3-4w^2x-5x^3+4wx^2-7xy^2
+4xz^2+4wy^2+2wz^2=0.
\end{aligned}
\]
\\
\hline

$44$
&
\[
\begin{aligned}
Q_{44}:&\quad
3w^2+16wx+32x^2-4y^2+z^2=0,\\
F_{44}:&\quad
w^2x+8x^3+2wy^2-4xy^2-2wz^2-5xz^2=0.
\end{aligned}
\]
\\
\hline

$53$
&
\[
\begin{aligned}
Q_{53}:&\quad
x^2-w^2+2xy+2xz-11y^2-10yz-7z^2=0,\\
F_{53}:&\quad
x^2z+xy^2+xyz+5xz^2+2y^2z+yz^2+6z^3=0.
\end{aligned}
\]
\\
\hline

$54$
&
\[
\begin{aligned}
Q_{54}:&\quad
w^2+2x^2-2y^2-z^2=0,\\
F_{54}:&\quad
w^3+3wz^2-x^3-3xy^2=0.
\end{aligned}
\]
\\
\hline

$61$
&
\[
\begin{aligned}
Q_{61}:&\quad
w^2-x^2+2xy-6xz+3y^2+6yz-5z^2=0,\\
F_{61}:&\quad
x^2z+xy^2+xyz+5xz^2+4y^2z+5yz^2+6z^3=0,
\end{aligned}
\]
\\
\hline

$81$
&
\[
\begin{aligned}
Q_{81}:&\quad
w^2-x^2+12z^2=0,\\
F_{81}:&\quad
w^2x+3xz^2+9z^3-y^3=0.
\end{aligned}
\]
\\
\hline

\end{tabular}
\caption{Galbraith's canonical models of the modular curves $X_0(N)$. Each curve is
the complete intersection
$X_0(N)=V(Q_N,F_N)\subset\mathbf P^3_{[w:x:y:z]}$.}
\label{tab:canonical-models-X0N}
\end{table}

For $N \in \{38,44,53,54,61 \}$ we get that the quadric is nonsingular over $\C$, hence $X_0(N)$ embeds in $\P^1 \times \P^1$ with bidegree $(3,3)$.

For $X_0(81)$, since the symmetric matrix of the quadric has rank $3$ it follows that $X_0(81)$ has a unique $g_3^1$. So it cannot embed in $\P^1 \times \P^1$ with bidegree $(3,3)$.

\end{proof}

\renewcommand{\arraystretch}{1.7}
\setlength{\tabcolsep}{8pt}

\begin{longtable}{c|p{0.82\textwidth}}
\hline
$N$ & Defining equation of the $(3,3)$-model in
$\mathbf P^1\times\mathbf P^1$ \\
\hline

\endfirsthead

\caption{$(3,3)$-models of the modular curves $X_0(N)$ in
$\mathbf P^1\times\mathbf P^1$.}
\label{tab: P1twice_models}\\
\endlastfoot

$38$
&
\[
\begin{aligned}
0={}&
u_0^3\Bigl[
(588+980s)v_0^2v_1
+(42-518s)v_0v_1^2
+(-366+398s)v_1^3
\Bigr]\\
&+u_0^2u_1\Bigl[
(-588+980s)v_0^3
+3528\,v_0^2v_1
-(3150+1428s)v_0v_1^2
+(267+1019s)v_1^3
\Bigr]\\
&+u_0u_1^2\Bigl[
(42+518s)v_0^3
+(3150-1428s)v_0^2v_1
-4284\,v_0v_1^2
+(1059+533s)v_1^3
\Bigr]\\
&+u_1^3\Bigl[
(366+398s)v_0^3
+(267-1019s)v_0^2v_1
+(-1059+533s)v_0v_1^2
+372\,v_1^3
\Bigr].
\end{aligned}
\]
\\
\hline

$44$
&
\[
\begin{aligned}
0={}&
u_0^3\Bigl[
(8s-13)v_0^3
-(8s+8)v_0v_1^2
\Bigr]\\
&+u_0^2u_1\Bigl[
(72s-9)v_0^2v_1
+(8-8s)v_1^3
\Bigr]\\
&+u_0u_1^2\Bigl[
-(72s+72)v_0^3
+(72s+9)v_0v_1^2
\Bigr]\\
&+u_1^3\Bigl[
(72-72s)v_0^2v_1
+(8s+13)v_1^3
\Bigr].
\end{aligned}
\]
\\
\hline

$53$
&
\[
\begin{aligned}
0={}&
u_0^3\Bigl[
54\,v_0^2v_1
+234\,v_0v_1^2
+(279-9s)v_1^3
\Bigr]\\
&+u_0^2u_1\Bigl[
108\,v_0^3
+1062\,v_0^2v_1
+(3294+12s)v_0v_1^2
+(2955+51s)v_1^3
\Bigr]\\
&+u_0u_1^2\Bigl[
720\,v_0^3
+(5706-24s)v_0^2v_1
+(13206-24s)v_0v_1^2
+(10029+269s)v_1^3
\Bigr]\\
&+u_1^3\Bigl[
(1404+72s)v_0^3
+(8346-72s)v_0^2v_1
+(16818-676s)v_0v_1^2
+(7873-311s)v_1^3
\Bigr].
\end{aligned}
\]
\\
\hline

$54$
&
\[
\begin{aligned}
0={}&
u_0^3\Bigl[
8v_0^3-8v_1^3
\Bigr]
+u_1^3\Bigl[
v_0^3+8v_1^3
\Bigr].
\end{aligned}
\]
\\
\hline

$61$
&
\[
\begin{aligned}
0={}&
u_0^3\Bigl[
1472\,v_0^3
+(4688+1776s)v_0^2v_1
+(4324+3868s)v_0v_1^2
+(1029+2028s)v_1^3
\Bigr]\\
&+u_0^2u_1\Bigl[
(-4688+1776s)v_0^3
-16680\,v_0^2v_1
-(17635+6720s)v_0v_1^2
-(5405+4835s)v_1^3
\Bigr]\\
&+u_0u_1^2\Bigl[
(4324-3868s)v_0^3
+(17635-6720s)v_0^2v_1
+20850\,v_0v_1^2
+(7325+2775s)v_1^3
\Bigr]\\
&+u_1^3\Bigl[
(-1029+2028s)v_0^3
+(-5405+4835s)v_0^2v_1
+(-7325+2775s)v_0v_1^2
-2875\,v_1^3
\Bigr].
\end{aligned}
\]
\\
\hline

\end{longtable}

\noindent
Here $([u_0:u_1],[v_0:v_1])$ are homogeneous coordinates on $\P^1\times\P^1$ and
$s=\sqrt{-3},\sqrt{-2},\sqrt{-15},\ \text{---},\ \sqrt{-1}$ for $N = 38,44,53,54,61$, respectively.

Considering the genus and \cite[Theorem 1.1]{NO24}, the only candidates of bidegree $(4,b)$ are the genus $9$ curves $X_0(N)$ with 
\[
N \in \{ 66, 70, 87, 88, 95, 96, 107 \}.\] For each level, we use Magma \cite{magma} to compute the Betti number $\beta_{2,4} = 5 = 9 - 4$, which guarantees that $g_4^{1}$ exists and is unique, see \cite[Theorem 4.1 and Theorem 4.4]{Schreyer91}. Hence $X_0(N)$ does not embed in $\P^1 \times \P^1$ with bidegree $(4,4)$ for any of these levels.

After applying all filters deriving from the Castelnuovo--Severi inequality and \cref{lem:fixed-point-obstruction-P1xP1}, we are left with the $19$ remaining levels $N$:

\[
\begin{array}{*{10}{r}}
212, & 216, & 237, & 307, & 304, & 433, & 529, & 332, & 417, & 423, \\
547, & 653, & 384, & 512, & 514, & 769, & 586, & 664, & 977.
\end{array}
\]

\begin{remark}
Notice that $N = 332$ could have $(a,b) = (5,11)$ or $(6,9)$, but the filtering only excludes $(a,b) = (5,11)$.
\end{remark}

\begin{proposition}\label{prop:gonality-at-least-seven}
For every
\[
    N\in\{304,417,433,529,547\},
\]
one has $\gon_{\C} X_0(N)\geq 7$.
In particular, none of these curves admits a geometric
embedding in $\P^1\times\P^1$ of
bidegree $(6,b)$.
\end{proposition}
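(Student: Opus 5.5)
Our plan is to bound the $\Q$-gonality from below by point counts over finite fields, and then pass from $\Q$ to $\C$ using the Tower Theorem. Each level $N\in\{304,417,433,529,547\}$ has a rational cusp, so $X_0(N)(\Q)\neq\emptyset$.

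The first step is to show $\gon_{\Q}X_0(N)\geq 7$. Using \cref{NO-Fp}, it is enough to find, for each $N$, a prime $p\nmid N$ and a power $q$ of $p$ with
\[
  \#X_0(N)(\F_q)>6(q+1).
\]
For a quick test we use \cref{lemma: Ogg-Fp} with $q=p^2$. Since $\psi(N)$ is of size about $N$, the supersingular points alone contribute roughly $\tfrac{p-1}{12}\psi(N)$. For $p=2$ (odd $N$) or $p=3$, the required inequality is
\[
  L_p(N)>6(p^2+1),
\]
and this holds comfortably, because $\psi(N)\geq 480$ while $6(p^2+1)\leq 60$. For $N=304$, which is even, we take $p=3$: here $\psi(304)=480$ and $L_3\geq 80+4>60$. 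In the few cases where this crude count is not enough, we instead compute $\#X_0(N)(\F_{p^k})$ exactly in Magma from the canonical model reduced mod $p$, or via traces of Hecke operators.

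The second step is the passage from $\Q$ to $\C$, which we expect to be the main obstacle. Suppose $\gon_{\C}X_0(N)=d\leq 6$. The Tower \cref{thm: tower-theorem} gives a curve $C'/\Q$ and a $\Q$-morphism $X_0(N)\to C'$ of degree $d'\mid d$ with $g(C')\leq (d/d'-1)^2\leq 25$.

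\begin{itemize}
  \item If $d'=d$, then $C'$ has genus $0$ and a rational point (the image of a cusp), so $C'\cong\P^1$. This gives $\gon_{\Q}\leq 6$, a contradiction.
  \item Otherwise $d'\in\{2,3\}$ and $X_0(N)$ is a double or triple cover of a curve $C'$ of bounded genus. Then $\gon_{\Q}X_0(N)\leq d'\gon_{\Q}(C')$.
\end{itemize}

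To exclude the second case, we use Castelnuovo--Severi (\cref{theorem: CS}) with $\pi_1$ the degree-$d'$ map to $C'$ and $\pi_2$ the degree-$d$ map to $\P^1$; here $\pi_2$ factors through $\pi_1$ by construction. Because $g(X_0(N))$ is large compared with $d'\,g(C')$, we instead argue by counting points. By the good reduction \cref{lemma : good_reduction_lemma}, the map reduces mod $p$. Then
\[
  \#X_0(N)(\F_q)\leq d'\,\#C'(\F_q)\leq d'\bigl(q+1+2g(C')\sqrt q\bigr),
\]
and we show this is contradicted by the point counts from the first step, possibly after choosing $q$ larger.

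If a clean bound of this kind fails for some $N$, we fall back on two further arguments. One is to recall that $C'$ must be a quotient that is itself dominated by $X_0(N)$ over $\Q$. The other is to run a Betti-number or Brill--Noether computation of the canonical ideal in Magma, in the same way as for the genus $9$ levels.

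Finally, the ``in particular'' part follows directly. A smooth curve of bidegree $(6,b)$ with $b\geq 6$ has $\gon_{\C}=\min\{6,b\}=6$ by \cite[Theorem 1.2]{CDJP19}, which contradicts $\gon_{\C}\geq 7$.
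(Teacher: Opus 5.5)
Your proposal is correct and is essentially the paper's argument: \cref{NO-Fp} gives $\gon_{\Q}X_0(N)>6$, the Tower Theorem produces a $\Q$-quotient $X_0(N)\to C'$, and the degree-two case is excluded by good reduction and Hasse--Weil. The paper differs only in citing $\gon_{\C}X_0(N)\geq 6$ from \cite[Proposition~5.23]{NO24} (so only $d=6$ arises), in disposing of $d'=3$ via Riemann--Roch on the genus $\leq 1$ quotient, and in using exact point counts rather than Ogg's bound. Two small repairs are needed: you omitted $d'=1$, which is excluded because $g(X_0(N))\geq 35>25\geq (d-1)^2$; and since $\psi(433)=434<480$, Ogg's bound at $q=4$ gives only about $38<2(4+1+8\cdot 2)=42$ in the degree-two step, so for $N=433$ you must take $q=9$, where $L_3(433)\approx 74>68$.
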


\begin{proof}
Fix one of the listed levels and put $X \coloneq X_0(N)$.
Choose $q$ as in the following table, and let
$p=\text{char}(\F_q)$.
In each case $p\nmid N$, so $X$ has good reduction at $p$;
write $\widetilde{X}$ for this reduction.
We use the following genera and point counts:
\[
\begin{array}{c|c|c|r|r|r}
N & g(X) & q
  & \#\widetilde{X}(\mathbb{F}_q)
  & 6(q+1)
  & 2(q+1+8\sqrt{q}) \\ \hline
304 & 35 & 9 & 92 & 60 & 68 \\
417 & 45 & 4 & 56 & 30 & 42 \\
433 & 35 & 9 & 88 & 60 & 68 \\
529 & 35 & 4 & 51 & 30 & 42 \\
547 & 45 & 4 & 54 & 30 & 42
\end{array}
\]
Since $\#\widetilde{X}(\mathbb{F}_q)>6(q+1)$,
\cref{NO-Fp} gives
\begin{equation}\label{eq:no-rational-map-degree-six}
    \operatorname{gon}_{\mathbb{Q}}(X)>6.
\end{equation}

On the other hand,
$\operatorname{gon}_{\mathbb{C}}(X)\geq 6$
by \cite[Proposition~5.23]{NO24}, since $N\geq 198$.
Suppose, for a contradiction, that
$\operatorname{gon}_{\mathbb{C}}(X)=6$.
The Tower~\cref{thm: tower-theorem} gives a smooth
projective geometrically integral curve $Y/\mathbb{Q}$
and a morphism  $\pi:X\longrightarrow Y$ defined over $\Q$, of degree $d\mid 6$, such that
\[
    g(Y)\leq \left(\frac{6}{d}-1\right)^2.
\]
The image of a rational cusp of $X$ gives
$Y(\mathbb{Q})\neq\varnothing$.

If $d=1$, then $X\simeq Y$, giving
$g(X)\leq 25$, contrary to the table.
If $d=6$, then $g(Y)=0$ and the rational point gives
$Y\simeq\mathbb{P}^1_{\mathbb{Q}}$; thus $\pi$
contradicts \eqref{eq:no-rational-map-degree-six}.
If $d=3$, then $g(Y)\leq 1$.
Since $Y$ has a rational point, Riemann--Roch gives
a morphism $Y\to\mathbb{P}^1$ over $\mathbb{Q}$
of degree at most two.
Composing with $\pi$ again contradicts
\eqref{eq:no-rational-map-degree-six}.

It remains to consider $d=2$, for which $g(Y)\leq 4$.
The case $g(Y)=0$ again contradicts
\eqref{eq:no-rational-map-degree-six}.
We may therefore assume $1\leq g(Y)\leq 4$.
By \cref{lemma : good_reduction_lemma}, $Y$ has good
reduction at $p$, and $\pi$ induces a finite morphism $\widetilde{\pi}:
    \widetilde{X}\longrightarrow\widetilde{Y}$
of degree two over $\mathbb{F}_p$.
Consequently, the Hasse--Weil bound gives
\[
\begin{aligned}
    \#\widetilde{X}(\mathbb{F}_q)
    &\leq 2\#\widetilde{Y}(\mathbb{F}_q) \\
    &\leq 2\bigl(q+1+2g(Y)\sqrt{q}\bigr) \\
    &\leq 2\bigl(q+1+8\sqrt{q}\bigr).
\end{aligned}
\]
This contradicts the last column of the table in
every case. Hence
$\operatorname{gon}_{\mathbb{C}}(X)\geq 7$.

Finally, an embedding of bidegree $(6,b)$ would
supply a ruling map of degree six, contradicting
this gonality bound.
\end{proof}

\begin{proposition}
$X_0(769)$ does not embed in $\P^1 \times \P^1$ with bidegree $(8,10)$.
\end{proposition}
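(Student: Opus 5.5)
The plan is to use the fixed points of the unique nontrivial Atkin--Lehner involution $w_{769}$ on $X \coloneq X_0(769)$, and to feed their number into \cref{lem:fixed-point-obstruction-P1xP1}.

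First, check the numerology. Since $769$ is prime and $769\equiv 1 \pmod{12}$, the genus formula gives $g(X)=(769-13)/12=63=(8-1)(10-1)$. So bidegree $(8,10)$ is the only case to rule out at this genus with $a=8$, and the hypotheses $3\le a<b$ of \cref{lem:fixed-point-obstruction-P1xP1} hold. Because $a=8$ and $b=10$ are both even, the lemma forces any involution of a curve embedded in this bidegree to satisfy
\[
r(\iota)\in\{0,4,2a,2b\}=\{0,4,16,20\}.
\]

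Second, compute $r_{769}=\#\Fix_X(w_{769})$.
\begin{itemize}
\item By the modular-curve analogue of \cref{thm: Ogg_fixed_pts} (Ogg's count for $X_0(p)$), since $769\equiv 1\pmod 4$, the fixed points are exactly the CM points by $\Z[\sqrt{-769}]$.
\item Hence $r_{769}=h(\Z[\sqrt{-769}])=h(-3076)$.
\item I would evaluate this class number directly, by reduced binary quadratic forms of discriminant $-3076$ or in Magma \cite{Repo}.
\item As a consistency check, Riemann--Hurwitz gives $r_{769}=2\cdot 63+2-4g(X_0(769)^+)$. The genus of $X_0(769)^+$ is tabulated, so the two numbers can be compared.
\end{itemize}
I expect the class number to be large: it is of size roughly $\sqrt{3076}$ times a constant, i.e.\ well above $20$, and should certainly not lie in $\{0,4,16,20\}$. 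If so, \cref{lem:fixed-point-obstruction-P1xP1} immediately excludes an embedding of bidegree $(8,10)$.

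The main obstacle is whether $h(-3076)$ happens to equal $16$ or $20$; these are the only dangerous values, because $0$ and $4$ are impossible given the expected size. If it does, I would fall back on the first Castelnuovo--Severi proposition with $\sigma=w_{769}$. That proposition applies when
\[
63>2g(X_0(769)^+)+9, \qquad\text{equivalently}\qquad g(X_0(769)^+)\le 26 .
\]
The conditions match up as follows:
\begin{itemize}
\item If $r_{769}=16$, then $g(X_0(769)^+)=28$ and this criterion fails.
\item If $r_{769}=20$, then $g(X_0(769)^+)=27$ and this criterion also fails.
\item If $r_{769}\ge 24$, then $g(X_0(769)^+)\le 26$ and this criterion succeeds, so the two arguments together are robust whenever $r_{769}$ is large.
\end{itemize}

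In the unlikely dangerous cases, I would instead show $\gon_{\C}(X)\ge 9$, in the spirit of \cref{prop:gonality-at-least-seven}. The steps would be:
\begin{itemize}
\item Use a point count $\#\widetilde X(\F_q)>8(q+1)$ together with \cref{NO-Fp} to get $\gon_{\Q}(X)>8$.
\item Use the Tower \cref{thm: tower-theorem} to reduce to maps $X\to Y$ over $\Q$ of degree $d\in\{2,4\}$.
\item Rule these out via \cref{lemma : good_reduction_lemma} and Hasse--Weil bounds on $\widetilde Y$.
\end{itemize}
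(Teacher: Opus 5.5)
As written, your argument does not prove the statement: the main line and your first fallback both fail, because you are in exactly the case you called unlikely. Enumerating reduced forms $ax^2+2kxy+cy^2$ with $ac-k^2=769$ gives one form with $k=0$, thirteen with $|k|=1$, and two each with $|k|=6,8,9$. So $h(-3076)=20$.

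Equivalently, $g(X_0(769)^+)=27$, which is the value the paper's proof uses, and your own Riemann--Hurwitz check gives $r_{769}=128-108=20=2b$. This is precisely the fixed-point count of an extension of type $(\tau,1)$. \cref{lem:fixed-point-obstruction-P1xP1} permits that type because $a=8$ is even, so the lemma gives no obstruction. Also $27>26$ defeats your Castelnuovo--Severi criterion with threshold $b-1=9$. Your size heuristic was off: $h(-3076)=(\sqrt{3076}/\pi)\,L(1,\chi)\approx 17.7\,L(1,\chi)$, so a value near $20$ is typical, not an outlier.

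Everything therefore rests on your last branch, which is only sketched. It can be completed, but the choice of $q$ matters. Over $\F_4$ only $j\in\F_4$ contributes: $66$ supersingular points, $2$ cusps, and at most $2$ points over each of the three ordinary $j$-values. Hence $\#X_0(769)(\F_4)\le 74<2(5+4\cdot 9)=82$, and the degree-two Tower case with $g(Y)\in\{8,9\}$ survives. With $q=9$, \cref{lemma: Ogg-Fp} gives $\#X_0(769)(\F_9)\ge 131$. This exceeds $8\cdot 10$, $4(10+6)$ and $2(10+2\cdot 9\cdot 3)=128$, so every Tower case is excluded and $X$ has no degree-$8$ map to $\P^1$.

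The paper's route is shorter. It applies Castelnuovo--Severi to the degree-$8$ ruling alone, where the threshold is $2\cdot 27+7=61<63$. So that ruling factors through $X_0(769)^+$, which would then be tetragonal, contradicting \cite[Theorem~1.2]{Orlic25b}. The observation missing from your fallback is that one ruling factoring already suffices; you do not need both. The required non-tetragonality of $X_0(769)^+$ can also be checked with the paper's own tools, via \cref{corollary: low_gon_eq} and $\#X_0(769)^+(\F_9)\ge 66>4\cdot 10$. That repair is lighter than carrying out your full third branch.
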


\begin{proof}
The quotient $X_0(769)^+$ has genus $27$, hence by the Castelnuovo--Severi inequality it follows that any $8$-gonal map would factor through $X_0(769)^+$, which is not tetragonal by \cite[Theorem 1.2]{Orlic25b}.
\end{proof}

\begin{proposition}

We have the following:

\begin{itemize}
    \item $X_0(332)$ does not geometrically embed in $\P^1 \times \P^1$ with bidegree $(6,9)$,
    \item $X_0(423)$ does not geometrically embed in $\P^1 \times \P^1$ with bidegree $(6,10)$,
    \item $X_0(514)$ does not geometrically embed in $\P^1 \times \P^1$ with bidegree $(8,10)$,
    \item $X_0(586)$ does not geometrically embed in $\P^1 \times \P^1$ with bidegree $(9,10)$.
\end{itemize}

\end{proposition}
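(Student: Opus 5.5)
The plan is to treat the four levels one at a time, combining the Atkin--Lehner involutions with \cref{lem:fixed-point-obstruction-P1xP1} and Castelnuovo--Severi (\cref{theorem: CS}). The bidegrees force $g(X_0(N))=(a-1)(b-1)$, namely $40,45,63,72$. For each $N$ I would list the Atkin--Lehner involutions $w_m$ with $m\parallel N$. Using Ogg's formula (\cref{thm: Ogg_fixed_pts}, in its modular-curve version including the cusps), I would compute the fixed-point counts $r_m$ and the genera of the quotients $X_0(N)/\langle w_m\rangle$ via Riemann--Hurwitz.

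Since these levels survived the earlier filtering, every $r_m$ already lies in the admissible list of \cref{lem:fixed-point-obstruction-P1xP1}. The argument must therefore use the \emph{type} of the extension $\widetilde{w}_m$, not just the count.

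Suppose some $w_m$ has $r_m=2b$ (respectively $2a$) with $a\ne b$. Then $\widetilde{w}_m=(\tau,1)$ (respectively $(1,\tau)$). The degree-$a$ (respectively degree-$b$) ruling map is then $w_m$-invariant and factors through the quotient. This gives a morphism $X_0(N)/\langle w_m\rangle\to\P^1$ of degree $a/2$ (respectively $b/2$):
\begin{itemize}
\item degree $3$ for $(6,9)$ and $(6,10)$;
\item degree $4$ or $5$ for $(8,10)$;
\item degree $5$ for $(9,10)$ when $r_m=18=2a$.
\end{itemize}
I would then show that the quotient has larger $\C$-gonality. Here I would use the known classifications of trigonal and tetragonal quotients (e.g.\ \cite{HS99a}, and \cite{Orlic25b} for quotients), or otherwise the argument of \cref{prop:gonality-at-least-seven}. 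That argument runs as follows: the quotient has a rational cusp, a point count over $\F_{q}$ together with \cref{NO-Fp} bounds the $\Q$-gonality, and the Tower \cref{thm: tower-theorem} plus \cref{lemma : good_reduction_lemma} and Hasse--Weil then exclude the intermediate curves.

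If instead every $w_m$ is of type $(\tau,\tau)$, so that $r_m\le 4$, I would apply Castelnuovo--Severi to the degree-$a$ ruling map and the quotient map by $w_m$. When $a=9$, $\gcd(9,2)=1$ forbids a common factorization, so it suffices that $72>2g(X_0(586)/\langle w_m\rangle)+8$. When $a$ is even, a common factorization would force the ruling to be $w_m$-invariant, which is incompatible with an extension of type $(\tau,\tau)$. For two commuting involutions I would use the four-fold version with $X_0(N)^*$.

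The main obstacle is the gonality lower bound for the quotient curves, especially excluding a degree-$4$ or degree-$5$ map on the genus $\approx 30$ quotients for $N=514$ and $586$. There the naive point-count bounds may be too weak at small $q$, and I would try several primes $p\nmid N$ and extensions $\F_{p^2}$ in Magma before falling back on the Castelnuovo--Severi bound against a further Atkin--Lehner quotient.
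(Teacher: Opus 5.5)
Your proof is correct, but it takes a different route from the paper.

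\textbf{The paper's route.} It applies \cref{lemma: 2a_obstruction}. Each level has two distinct Atkin--Lehner involutions with the same large fixed-point count: $r_{83}=r_{332}=18$, $r_{47}=r_{423}=20$, $r_{257}=r_{514}=16$ and $r_{293}=r_{586}=18$. Both members of each pair extend to involutions acting nontrivially on the same single factor of $\P^1\times\P^1$. Their product is $w_4$, $w_9$, $w_2$, $w_2$ respectively, and it is a nontrivial involution of the same form. So it would have to have the same large number of fixed points. This contradicts $r_4=2$, $r_9=0$, $r_2=4$ and $r_2=2$. No gonality or point-count input is needed.

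\textbf{Your route.} You use a single such involution. You push the invariant ruling down to a map of degree $3$ (for $N=332,423$) or degree $5$ (for $N=514,586$) on a quotient of genus $16$, $18$, $28$, $32$ respectively. You then need an arithmetic lower bound on the quotient's gonality. This goes through, and the obstacle you anticipate does not arise.
\begin{enumerate}
\item The image of the cusp $\infty$ is a rational point on the quotient $Y$. In degree $3$, \cref{corollary: low_gon_eq} (genus at least $10$) gives $\gon_{\Q}(Y)\le 3$. In degree $5$, which is prime, \cref{corollary: map_over_k} applies because $g(Y)>16$ and gives a degree-$5$ map over $\Q$.
\item The degree-two quotient map gives $\#Y(\F_q)\ge \tfrac12\#X_0(N)(\F_q)$. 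Halving Ogg's bound \cref{lemma: Ogg-Fp} then yields $\#Y(\F_9)\ge 44>30$ for $332$, $\#Y(\F_4)\ge 26>15$ for $423$, $\#Y(\F_9)\ge 67>50$ for $514$, and $\#Y(\F_9)\ge 76>50$ for $586$.
\item \cref{NO-Fp} gives the contradiction in each case.
\end{enumerate}
The trigonal and tetragonal classifications you mention would not settle the degree-$5$ cases, so the point-count argument is the one to use. Your branch where every $w_m$ is of type $(\tau,\tau)$ is never needed for these four levels.

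\textbf{Comparison.} The paper's argument is shorter and purely geometric once Ogg's fixed-point counts are known. Yours needs only one involution with $2a$ or $2b$ fixed points, so it would still apply where the product trick fails. The cost is the extra arithmetic input: descent of the map to $\Q$ and finite-field point counts.
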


\begin{proof}
We use \cref{lemma: 2a_obstruction} together with the following data:
\[
\begin{array}{c|ccc}
N & r_{m_1} & r_{m_2} & r_{m_3}\\
\hline
332 & r_{4}=2 & r_{83}=18 & r_{332}=18,\\
423 & r_{9}=0 & r_{47}=20 & r_{423}=20,\\
514 & r_{2}=4 & r_{257}=16 & r_{514}=16,\\
586 & r_{2}=2 & r_{293}=18 & r_{586}= 18.
\end{array}
\]

\end{proof}

\begin{proposition}
$X_0(653)$ does not geometrically embed in $\P^1 \times \P^1$ with bidegree $(7,10)$.
\end{proposition}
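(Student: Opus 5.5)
The plan is to exploit the single Atkin--Lehner involution $w_{653}$ through \cref{lem:fixed-point-obstruction-P1xP1}. Since $653$ is prime with $653\equiv 1 \pmod 4$ and $653\equiv 2\pmod 3$, the genus formula gives $g(X_0(653))=54=(7-1)(10-1)$, consistent with bidegree $(7,10)$. Here $a=7$ is odd and $b=10$ is even with $a<b$. The lemma therefore forces
\[
r(w_{653})\in\{2,\,2a\}=\{2,14\}.
\]

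\textbf{Step 1: count the fixed points.} By \cref{thm: Ogg_fixed_pts} with $m=653\equiv 1\pmod 4$, the fixed points of $w_{653}$ are exactly the $\Z[\sqrt{-653}]$-CM points. There are no further local factors since $DN/m=1$, so $r=h(-4\cdot 653)$. I would compute this class number, in Magma or via the class number formula. If it differs from $2$ and $14$, the lemma immediately rules out the bidegree $(7,10)$ and we are done. Equivalently, I would compute $g(X_0(653)^+)$, which is tabulated, and read off $r=2g+2-4g^+$.

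\textbf{Step 2: the case $r\in\{2,14\}$.} If $r$ does land in this set, I would split according to the table in the lemma. The case $r=2$ comes from type $(\tau,\tau)$. The case $r=14=2a$ comes from type $(1,\tau)$: there the involution acts trivially on the first factor and as $\tau$ on the second.

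In the $(1,\tau)$ case, the projection that is $w_{653}$-invariant factors through $X_0(653)\to X_0(653)^+$. Tracking which ruling has degree $10$ versus $7$, one of the ruling maps is $w_{653}$-invariant. Since $7$ is odd, that map must be the degree-$10$ one, so it descends to a map $X_0(653)^+\to\P^1$ of degree $5$. I would then rule out a degree-$5$ map on $X_0(653)^+$ over $\C$ as follows:
\begin{itemize}
\item Since $5$ is prime and $X_0(653)^+$ has rational cusps, \cref{remark : prime_gonality} descends the map to $\Q$, provided $g^+>16$; this should hold if $g^+$ is around $20$.
\item A point count on the reduction mod $2$ or $3$ over some $\F_q$ exceeding $5(q+1)$ then contradicts \cref{NO-Fp}.
\end{itemize}

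\textbf{Step 3: the case $r=2$ (type $(\tau,\tau)$).} This case is harder. I would first test \cref{theorem: CS} as in the second proposition with $d=7$, namely whether $54>2g^++6$. If that fails, I would use that both rulings are equivariant for $\tau$. The degree-$7$ ruling then induces a degree-$7$ map $X_0(653)^+\to\P^1/\tau\cong\P^1$, well defined up to the quotient, and this gives a low-degree map on $X_0(653)^+$. I would then compare with known gonality bounds for $X_0(N)^+$, for instance \cite{Orlic25b}, or with point counts.

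\textbf{Expected main obstacle.} I expect the real obstacle to be that $r$ equals $2$ or $14$, so that the final step requires a gonality lower bound on $X_0(653)^+$. My first attempt there would be $\F_{q}$ point counts combined with the Tower theorem.
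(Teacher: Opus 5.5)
Your argument can be completed, but as written it leaves out the one computation that decides which branch applies, and the branch you leave open would not close with the tools you list.

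\textbf{Completing your route.} The missing number is $r(w_{653})=h(-2612)=14$. The reduced forms of discriminant $-2612$ are $(1,0,653)$, $(2,2,327)$, $(3,\pm2,218)$, $(6,\pm2,109)$, $(9,\pm4,73)$, $(13,\pm12,53)$, $(18,\pm14,39)$ and $(26,\pm14,27)$. Hence $g(X_0(653)^+)=24$, and only the $(1,\tau)$ row of \cref{lem:fixed-point-obstruction-P1xP1} can occur. Your Step 2 then finishes the proof:
\begin{enumerate}
\item[(i)] the invariant degree-$10$ ruling gives a degree-$5$ map on $X_0(653)^+$;
\item[(ii)] since $5$ is prime and $24>16$, \cref{corollary: map_over_k} gives such a map over $\Q$ (use this rather than \cref{remark : prime_gonality}, which assumes the gonality is exactly $5$);
\item[(iii)] \cref{lemma: Ogg-Fp} gives $\#X_0(653)^+(\F_4)\geq\lceil 57/2\rceil=29>5\cdot 5$, contradicting \cref{NO-Fp}.
\end{enumerate}

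\textbf{Why Step 3 would have failed.} Had $r$ been $2$, you would have had $g(X_0(653)^+)=27$. Castelnuovo--Severi gives nothing, since $2\cdot 27+6=60\geq 54$. A degree-$7$ map on a genus-$27$ curve also cannot be descended by the Tower theorem, because $27\leq 36$.

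\textbf{The paper's route.} The paper never uses the involution: the descent trick you apply on the quotient already works on $X_0(653)$ itself. The embedding supplies a degree-$7$ ruling, $7$ is prime, and $g=54>36=(7-1)^2$. So \cref{remark : prime_gonality} yields a degree-$7$ map over $\Q$, while \cref{lemma: Ogg-Fp} gives $\#X_0(653)(\F_4)\geq 57>7\cdot 5$.

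This needs no fixed-point data, and it would also have disposed of your Step 3 case. The descent step works for any bidegree $(p,b)$ with $p<b$ and $p$ prime, since then $g=(p-1)(b-1)>(p-1)^2$ automatically. Your route instead requires a class-number computation and succeeds only because $r\neq 2$.
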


\begin{proof}
Assume for a contradiction that $X_0(653)$ geometrically embeds in $\P^1 \times \P^1$ with bidegree $(7,10)$. Then by \cref{remark : prime_gonality}
$\gon_{\C}(X_0(653)) = \gon_{\Q}(X_0(653)) = 7$. But by \cref{lemma: Ogg-Fp} we know that $(X_0(653))(\F_4) \ge 57$, hence 
\[
\gon_{\Q}(X_0(653)) \ge \gon_{\F_4}(X_0(653)) \ge 12.
\]
\end{proof}

\begin{proposition}\label{prop: not_plane_sextic}
For $N \in \{ 212, 216, 237, 307 \}$, we have that $X_0(N)^+$ is a genus $10$ curve with $\gon_{\C}(X_0(N)^+) \ge 5$. Moreover, $X_0(N)^+$ is not a smooth plane sextic. 
\end{proposition}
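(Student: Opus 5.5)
We plan to establish the three claims separately for each $N \in \{212,216,237,307\}$. The genus statement comes from Riemann--Hurwitz. The gonality bound comes from finite-field point counts combined with the Tower Theorem. The statement that $X_0(N)^+$ is not a smooth plane sextic then follows from Max Noether's theorem.

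\textbf{Genus.} First we compute $g(X_0(N))$ and the number of fixed points of $w_N$. The fixed points of $w_N$ are counted by the class numbers of $\Z[\sqrt{-N}]$, together with those of $\Z[(1+\sqrt{-N})/2]$ when $N\equiv 3 \pmod 4$, plus the extra contributions for $N=4$ (which does not occur here). Riemann--Hurwitz then gives $g(X_0(N)^+)=10$. Alternatively, the genus can simply be read off from Magma, since these quotients are standard and appear in tables of $X_0(N)^+$.

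\textbf{Gonality.} Next we show $\gon_{\C}(X_0(N)^+)\ge 5$, arguing as in \cref{prop:gonality-at-least-seven}. Put $Y=X_0(N)^+$, which has a rational point (the image of a rational cusp). We choose a small prime power $q$ coprime to $N$, for example $q\in\{4,9,25\}$, and compute $\#\widetilde Y(\F_q)$ in Magma from a canonical model of $Y$. We aim to have $\#\widetilde Y(\F_q)>4(q+1)$, so that \cref{NO-Fp} gives $\gon_\Q(Y)\ge 5$. To pass from $\Q$-gonality to $\C$-gonality we use the results quoted earlier.
\begin{itemize}
\item Genus $10$ curves with $\gon_\C\le 4$ are covered by \cref{corollary: low_gon_eq}, since $g=10$ and $Y(\Q)\neq\emptyset$. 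This yields $\gon_\Q=\gon_\C$, so any geometric map of degree $\le 4$ would contradict the point count.
\end{itemize}

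\textbf{Main obstacle.} We expect the hard step to be finding a $q$ with enough points, since $4(q+1)$ grows quickly. If no single $q$ suffices, we fall back on degree-$4$ maps over $\Q$ arising through the Tower Theorem route with $d=2$. In that case $Y$ would be a double cover of a curve $Z$ with $g(Z)\le 1$. We rule this out by bounding $\#\widetilde Y(\F_q)\le 2(q+1+2\sqrt q)$ via \cref{lemma : good_reduction_lemma}, or by checking $\Aut(Y)$ in Magma. Alternatively, we could use the Betti number $\beta_{2,\cdot}$ of the canonical ideal, as was done for genus $9$ via Schreyer, to certify that there is no $g^1_4$.

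\textbf{Not a smooth plane sextic.} Suppose $Y$ were a smooth plane sextic. Its genus $10=\tfrac{5\cdot4}{2}$ is consistent, so genus alone gives no contradiction. The gonality bound does, however: by \cref{thm: Max_Noether} with $K=\Q$ (and over $\C$ as well), a smooth plane sextic has gonality exactly $5$, and its $g^1_5$ comes from projection from a point of the curve. So gonality $5$ is not by itself a contradiction, and we need a sharper argument. We would show $\gon_\C(Y)\ge 6$, or directly exclude a $g^2_6$. A smooth plane sextic has a $g^1_5$ defined over any field over which the model and a point are defined. Since $Y(\Q)\ne\emptyset$, the plane model descends to $\Q$: the $g^2_6$ is unique, hence Galois-stable, and the Brauer obstruction vanishes because of the rational point. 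Projecting from a rational point would then give $\gon_\Q(Y)\le 5$. Thus it suffices to choose $q$ with $\#\widetilde Y(\F_q)>5(q+1)$, which gives $\gon_\Q(Y)\ge 6$ by \cref{NO-Fp}, a contradiction. In practice we would aim for this stronger count from the start, since it yields both claims at once.
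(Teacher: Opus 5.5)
Your genus step is fine, and the logic of your last step is correct. A smooth plane sextic has a unique $g^2_6$, so since $Y(\Q)\neq\emptyset$ a geometric plane model descends to $\Q$, and projecting from a rational point gives $\gon_\Q(Y)\le 5$. The gap is that the exclusion then rests entirely on finding $q$ with $\#\widetilde Y(\F_q)>5(q+1)$. That amounts to certifying $\gon_\Q(Y)\ge 6$, which is strictly more than the proposition claims, and you never check that such a $q$ exists.

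In general it need not exist. If $\gon_\Q(Y)=5$ for one of these levels, which is compatible with the statement, no point count can work. Concretely, Hasse--Weil gives $\#\widetilde Y(\F_q)\le q+1+20\sqrt q$, and this exceeds $5(q+1)$ only for $q\le 22$. For $N=216$, where $2$ and $3$ are bad, only $q\in\{5,7,11,13,17,19\}$ remain, and $q=5$ is already impossible. The $w_{216}=+1$ part of $S_2(\Gamma_0(216))$ contains a $5$-dimensional space of oldforms from the CM curves 27a, 36a, 108a, on which $T_5=0$. Hence $\#\widetilde Y(\F_5)\le 6+10\sqrt5<29<30=5(5+1)$. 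The other admissible $q$ would need counts equally far in the tail.

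Your gonality fallback has a smaller hole of the same kind. The Tower Theorem case $d'=2$ is handled, but the case $d'=4$, a genuine degree-$4$ map to $\P^1$ over $\Q$, still needs the $4(q+1)$ count.

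What is missing is an invariant that detects the $g^2_6$ directly, instead of going through gonality. The paper uses syzygies of the canonical ideal. A smooth plane sextic, whose canonical model lies on the Veronese surface $v_3(\P^2)\subset\P^9$, has $\beta_{2,4}=27$, whereas Magma gives $\beta_{2,4}=0$ for these four curves. The same vanishing excludes a $g^1_4$, and together with non-trigonality this gives $\gon_\C\ge 5$. So your Betti-number alternative is exactly the paper's route for that part.

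For $N\in\{212,216,237\}$ there is also an automorphism argument. By \cref{theorem: smooth_plane_fixed_points}, every involution of a smooth plane sextic has $6$ fixed points, so every quotient by an involution has genus $(2\cdot 10+2-6)/4=4$. The paper notes that a further Atkin--Lehner quotient of $X_0(N)^+$ contradicts this. For $N=307$ no Atkin--Lehner involution remains on $X_0(N)^+$, so there the syzygy computation is what does the work.
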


\begin{proof}
A smooth plane sextic has the Betti number $\beta_{2,4} = 27$, while the curves above have $\beta_{2,4} = 0$. For $N \in \{ 212, 216, 237\}$, one can also use a further quotient and \cref{theorem: smooth_plane_fixed_points} to obtain a contradiction. Notice that since $\beta_{2,4} = 0$, there is no $g_4^1$ and since these curves are not trigonal, it follows that $\gon_{\C}(X_0(N)^+) \ge 5$.
\end{proof}

\begin{proposition}
For $N\in\{212,216,237,307\}$, the genus-$25$ curve $X_0(N)$ does not
geometrically embed in $\P^1\times\P^1$ with bidegree $(6,6)$.
\end{proposition}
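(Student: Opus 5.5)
We apply \cref{prop:balanced-double-quotient} with $n=6$ to the degree-two quotient map $\pi\colon X_0(N)\to X_0(N)^+$. The numerics match: $g(X_0(N))=25=(6-1)^2$, and by \cref{prop: not_plane_sextic} the quotient has $g(X_0(N)^+)=10=\frac{(6-1)(6-2)}{2}$. Riemann--Hurwitz then gives $\#\Fix(w_N)=2\cdot 25+2-4\cdot 10=12=2n$, which is consistent with the hypotheses. As a sanity check, this count could also be confirmed directly from class numbers via Ogg's formula.

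Suppose, for a contradiction, that $X_0(N)$ geometrically embeds in $\P^1\times\P^1$ with bidegree $(6,6)$. Then \cref{prop:balanced-double-quotient} says that one of two things holds:
\begin{enumerate}[(a)]
\item $X_0(N)^+$ admits a morphism to $\P^1$ of degree $n/2=3$;
\item $X_0(N)^+$ is a smooth plane sextic.
\end{enumerate}

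Both alternatives are ruled out by \cref{prop: not_plane_sextic}. That proposition gives $\gon_{\C}(X_0(N)^+)\ge 5$, so there is no degree-$3$ map to $\P^1$, which excludes (a). It also states directly that $X_0(N)^+$ is not a smooth plane sextic, which excludes (b). This is the required contradiction.

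The only point requiring care is checking that the hypotheses of \cref{prop:balanced-double-quotient} hold exactly. We need $n=6$ even and at least $4$, and the genus equalities above; these come from the genus data already recorded in \cref{prop: not_plane_sextic}. We also need $X_0(N)\to X_0(N)^+$ to be a genuine degree-two morphism of smooth connected curves over $\C$, which holds because $w_N$ is a nontrivial involution. Once these are verified, the rest follows formally from the two results cited. The real obstacle, the Betti number computation $\beta_{2,4}=0$, has already been handled in \cref{prop: not_plane_sextic}.
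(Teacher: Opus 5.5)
Your proposal is correct and follows the paper's proof: you apply \cref{prop:balanced-double-quotient} with $n=6$ to the Fricke quotient $X_0(N)\to X_0(N)^+$ (genera $25$ and $10$), and rule out both the degree-three map and the smooth plane sextic alternative using \cref{prop: not_plane_sextic}. Your extra check that $w_N$ has $12=2n$ fixed points is consistent with the argument but not needed, since \cref{prop:balanced-double-quotient} derives that count from the genera itself.
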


\begin{proof}
Put
\[
  X \coloneq X_0(N),\qquad Y \coloneq X_0(N)^+.
\]
The quotient by the Fricke involution $w_N$ has degree two, with
$g(X)=25$ and $g(Y)=10$. \cref{prop:balanced-double-quotient},
applied after base change to $\C$ with $n=6$, shows that a hypothetical
$(6,6)$-embedding would give either a degree-three map $Y\to\P^1$
or a smooth plane sextic model of $Y$. \cref{prop: not_plane_sextic} excludes
both possibilities.
\end{proof}

\begin{proposition}
For $N\in\{384,512\}$, the modular curve $X_0(N)$ does not geometrically
embed in $\P^1\times\P^1$ with bidegree $(8,8)$.
\end{proposition}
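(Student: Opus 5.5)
We prove the statement by contradiction, using \cref{prop:balanced-double-quotient} with $n=8$, just as in the $(6,6)$ case above. Put $X\coloneq X_0(N)$ for $N\in\{384,512\}$. A $(8,8)$-embedding forces $g(X)=(8-1)^2=49$, which we confirm from the genus formula. The first step is to find an Atkin--Lehner involution $w$ whose quotient $Y\coloneq X/\langle w\rangle$ has genus $(n-1)(n-2)/2=21$. By Riemann--Hurwitz this is equivalent to $\#\Fix_X(w)=2g(X)+2-4\cdot 21=16$. Since $N=2^7\cdot 3$ and $N=2^9$, the candidates are the Fricke involution $w_N$ and, for $384$, also $w_{128}$ and $w_3$. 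We compute their fixed-point counts using the class numbers of the relevant CM orders, for example $\Z[\sqrt{-384}]$, $\Z[\sqrt{-128}]$ and $\Z[\sqrt{-512}]$, or read them off in Magma.

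We also use the counts for the remaining involutions. By \cref{lem:fixed-point-obstruction-P1xP1} with $a=b=8$ even, every involution of $X$ must have $r\in\{0,4,16\}$. So if some involution has any other count, we are done immediately, and we try this first. Otherwise, fix $w$ with $r_w=16$ and $g(Y)=21$. \cref{prop:balanced-double-quotient} then says one of two things must hold: either $Y$ admits a morphism to $\P^1$ of degree $4$, or $Y$ is a smooth plane octic.

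The main obstacle is excluding both alternatives for the explicit genus-$21$ curve $Y=X_0(N)/\langle w\rangle$. We would compute a canonical model of $Y$ in Magma and then argue as follows.

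\textbf{Excluding a $g^1_4$.} The curve $Y$ is defined over $\Q$ and has rational cusps. Since $g(Y)\ge 10$, \cref{corollary: low_gon_eq} reduces the geometric statement $\gon_{\C}(Y)\ge 5$ to $\gon_{\Q}(Y)\ge 5$. For the rational bound we use \cref{NO-Fp}: we look for a prime power $q$, with $p$ odd (for instance $q=9$, or $q=25$ with $p=5$), such that $\#\widetilde{Y}(\F_q)>4(q+1)$. If the point counts are too small for this, we fall back on computing the graded Betti number $\beta_{2,\ast}$ as in \cref{prop: not_plane_sextic}. Green's conjecture holds for curves of such a low Clifford index, so the Betti numbers detect the gonality.

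\textbf{Excluding a smooth plane octic.} A smooth plane octic has gonality $7$. It is therefore enough either to exhibit a map of degree less than $7$ on $Y$, or to compare Betti tables, since a plane octic has a specific nonzero $\beta_{2,4}$ determined by its Koszul cohomology. Alternatively, we use \cref{theorem: smooth_plane_fixed_points}: any involution of a plane octic has exactly $8$ fixed points. So it suffices to find a further Atkin--Lehner involution on $Y$, induced from another $w_m$ commuting with $w$ (available when $N=384$), whose fixed-point count on $Y$ differs from $8$. For $N=512$ there is only $w_{512}$, so we would instead use the Betti-number computation, or the extra automorphisms of $X_0(512)$ that come from normalizer elements.
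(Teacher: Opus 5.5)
Your reduction is set up correctly. Indeed $h(-1536)=h(-2048)=16$, so $w_N$ has $16$ fixed points and $Y=X_0(N)^+$ has genus $21$. The $g^1_4$ alternative can also be excluded as you propose. For $N=384$ you cannot use $q=9$, since $3\mid 384$, but \cref{lemma: Ogg-Fp} at $p=5$ gives $\#Y(\F_{25})\ge 260/2=130>104$. For $N=512$, $q=9$ gives $\#Y(\F_9)\ge 65>40$.

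The gap is the smooth-plane-octic alternative: none of the tools you suggest actually excludes it.
\begin{itemize}
\item \emph{Fixed points, $N=384$.} The only other Atkin--Lehner involution on $Y$ is the common image $\overline{w}_3=\overline{w}_{128}$. Since $r_{w_3}=0$ and $r_{w_{128}}=2h(-512)=16$, it has exactly $8$ fixed points on $Y$ (equivalently $g(X_0(384)^*)=9$). That is exactly what \cref{theorem: smooth_plane_fixed_points} predicts for an octic, so this test gives no contradiction.
\item \emph{Fixed points, $N=512$.} You offer no concrete argument.
\item \emph{Betti numbers.} The fallback is misstated. A smooth plane octic has Clifford index $4$, so its $\beta_{2,4}$ is zero in the indexing of \cref{prop: not_plane_sextic}, where $\beta_{2,4}\neq 0$ detects Clifford index $\le 2$. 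Separating $Y$ from an octic would need much more of the Betti table of a genus-$21$ canonical curve. You have neither computed it nor shown that it differs.
\item \emph{A map of degree $<7$.} You give no candidate for such a map on $Y$.
\end{itemize}

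The missing idea is to use a different degree-two map. Because $2\mid N/2$, the degeneracy map $X_0(N)\to X_0(N/2)$ has degree $\psi(N)/\psi(N/2)=2$. Its deck involution comes from $\Gamma_0(N/2)/\Gamma_0(N)$, not from an Atkin--Lehner operator. Moreover $g(X_0(192))=g(X_0(256))=21$.

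Applying \cref{prop:balanced-double-quotient} to this map, both alternatives are settled by the literature. First, $\gon_{\C}X_0(N/2)\ge 6$ by \cite[Proposition~5.23]{NO24}. Second, $X_0(192)$ and $X_0(256)$ have no smooth plane model by \cite[\S5.6]{AALG23}. This is the paper's proof. If you keep the Fricke quotient, you would instead need a genuinely new argument that $X_0(N)^+$ is not a smooth plane octic.
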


\begin{proof}
Consider the degree-two degeneracy map $X_0(N)\longrightarrow X_0(N/2)$.
The source and target have genera $49$ and $21$, respectively. \cref{prop:balanced-double-quotient}, applied after base change
to $\C$ with $n=8$, shows that a hypothetical $(8,8)$-embedding would
imply that $X_0(N/2)$ admits either a geometric degree-four map to
$\P^1$ or a geometric smooth plane octic model.

The first alternative is excluded by $\gon_{\C}X_0(N/2)\geq 6$,
as given by \cite[Proposition~5.23]{NO24}. The second is excluded by
\cite[\S5.6]{AALG23}, since $N/2\in\{192,256\}$.
\end{proof}

\begin{proposition}
For $N\in\{664,977\}$, the modular curve $X_0(N)$ does not geometrically
embed in $\P^1\times\P^1$ with bidegree $(10,10)$.
\end{proposition}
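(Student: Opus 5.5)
Genus of a $(10,10)$ curve is $81$, so $g(X_0(N))=81$ for both levels. Both $N=664=8\cdot 83$ and $N=977$ (prime) have a Fricke involution $w_N$; for $977$ prime this is the only Atkin--Lehner involution. The plan is to apply \cref{prop:balanced-double-quotient} with $n=10$ to a degree-two quotient $\pi\colon X_0(N)\to Y$ with $g(Y)=(9\cdot 8)/2=36$.

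\textbf{Step 1: choose the quotient.} For $N=977$ the candidate is $Y=X_0(977)^+$. Its genus follows from Riemann--Hurwitz once $r_{977}=\#\Fix(w_{977})$ is known. By \cref{thm: Ogg_fixed_pts}, and since $977\equiv 1 \pmod 4$, this count is $h(\Z[\sqrt{-977}])$. For $N=664$ I would compute $r_m$ for $m\in\{8,83,664\}$ with Ogg's formula and pick one with $r_m=2n=20$, i.e.\ quotient genus $36$. If no Atkin--Lehner quotient has genus $36$, then some $w_m$ has $r_m\notin\{0,4,20\}$, and \cref{lem:fixed-point-obstruction-P1xP1} alone rules out the $(10,10)$ embedding (the case $a=b$ even allows only $\{0,4,2a\}$). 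So first check whether the fixed-point filter already suffices; since these levels survived the filtering, presumably some $r_m=20$ occurs, giving a genus-$36$ quotient $Y$.

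\textbf{Step 2: apply \cref{prop:balanced-double-quotient}.} Applied after base change to $\C$, a $(10,10)$ embedding would force either (a) a degree-$5$ map $Y\to\P^1$, or (b) a smooth plane decic model of $Y$.

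\textbf{Step 3: rule out (b).} Case (b) can be excluded via \cref{theorem: smooth_plane_fixed_points}, provided $Y$ carries a further involution. For $N=664$, another Atkin--Lehner involution descends to $Y$, and its fixed-point count on $Y$ should differ from $10$. For $N=977$ there is no further involution, so instead I would use gonality. A smooth plane decic has $\gon_\C=9$, and $Y$ has a rational point (the image of a cusp), so by \cref{thm: Max_Noether} it would also have $\gon_\Q=9$. A point count over $\F_4$, via \cref{NO-Fp}, giving $\#\widetilde Y(\F_4)>45$ would contradict this. Failing that, I would compare graded Betti numbers in Magma against those of a plane decic, as in \cref{prop: not_plane_sextic}.

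\textbf{Step 4: rule out (a), the main obstacle.} I would try the Tower Theorem argument of \cref{prop:gonality-at-least-seven}. Since $g(Y)=36>16=(5-1)^2$ and $5$ is prime, \cref{remark : prime_gonality} gives $\gon_\Q(Y)=5$. An $\F_q$ point count with $\#\widetilde Y(\F_q)>5(q+1)$ then contradicts this via \cref{NO-Fp}. Such counts can be obtained from the Hecke action on weight-two cusp forms restricted to the $w$-invariant subspace; Ogg's bound \cref{lemma: Ogg-Fp} on $X_0(N)$ is also usable, since $\#\widetilde Y(\F_{p^2})\geq \tfrac12\#\widetilde X(\F_{p^2})$. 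For $N=977$ and $p=2$ this gives $\#X_0(977)(\F_4)\ge 978/12+2\approx 83.5$, so $\#Y(\F_4)\ge 42>25$, which works. For $N=664$, whose level is even, I would use $p=3$ and $\F_9$: there $\#X(\F_9)\ge \tfrac{2}{12}\psi(664)+4=\tfrac{2}{12}\cdot 1008+4=172$, hence $\#Y(\F_9)\ge 86>50$. This step also rules out the degree-$9$ plane-model gonality in Step 3, since $86\le 90$ fails for $9$; there I would need a sharper count or the Betti-number route.
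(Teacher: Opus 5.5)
\textbf{Verdict.} Your plan is correct in outline and shares the paper's skeleton. It differs from the paper in how it rules out the two alternatives, and it leaves one needed point count (for $N=977$) conditional.

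\textbf{Common skeleton.} Both arguments apply \cref{prop:balanced-double-quotient} with $n=10$ to the Fricke quotient $Y=X_0(N)^+$, which has genus $36$ (i.e.\ $r_N=20$). Both then refute the two alternatives over $\Q$, using a rational cusp and point counts.

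\textbf{The paper's route.} The paper merges the two alternatives into one bound:
\begin{itemize}
\item a geometric degree-$5$ map descends to $\Q$ by \cref{corollary: map_over_k};
\item a geometric plane decic descends to $\Q$ by \cite[Remark~20]{RX18}, and projecting from a rational point gives a degree-$9$ map.
\end{itemize}
So in both cases $\gon_\Q(Y)\le 9$. This is contradicted by the single counts $\#Y(\F_9)=102>90$ for $N=664$ and $\#Y(\F_4)=50>45$ for $N=977$.

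\textbf{Your route.} You treat the cases separately.
\begin{itemize}
\item For (a) you use \cref{lemma: Ogg-Fp}: $\#Y(\F_9)\ge 86>50$ and $\#Y(\F_4)\ge 42>25$. This is correct and needs no actual point count.
\item For (b) with $N=664$, your residual-involution argument goes through. First, $r_8=2\cdot 2=4$, from $h(-32)=2$ and $\left(\frac{-8}{83}\right)=1$. Second, $r_{83}=0$: neither $a^2+8bc=-83$ nor $a(1-a)-8bc=21$ has a $2$-adic solution, so no order containing $\sqrt{-83}$ embeds in the level-$8$ Eichler order. Third, $r_{664}=20$. Hence $g(X_0(664)^\ast)=\tfrac12(40+41+36-81)=18$. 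The involution induced on $Y$ therefore has $2\cdot 36+2-4\cdot 18=2\neq 10$ fixed points, contradicting \cref{theorem: smooth_plane_fixed_points}. This avoids both the descent of the plane model and any explicit point count for $664$.
\item For (b) with $N=977$, there is no second involution, and Ogg's bound stops at $42\le 45$. You are therefore forced onto the paper's route. The inequality you need does hold ($\#Y(\F_4)=50$), but your write-up only says it "would" contradict. That count must be supplied for the proof to be complete.
\end{itemize}

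\textbf{Smaller repairs.}
\begin{itemize}
\item \cref{remark : prime_gonality} assumes $\gon_\C(Y)=5$ exactly, whereas alternative (a) only provides some degree-$5$ map. Cite \cref{corollary: map_over_k} instead; it gives $\gon_\Q(Y)\le 5$, which is all you need.
\item Using Max Noether over $\Q$ tacitly descends the plane model. This is justified because a smooth plane curve of degree $\ge 4$ carries a unique $g^2_d$ and $Y(\Q)\neq\varnothing$; the paper cites \cite{RX18} for the descent instead.
\item The last sentence of your Step 4 should say that the Ogg bound does \emph{not} handle the degree-$9$ case.
\end{itemize}
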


\begin{proof}
Put
\[
  X \coloneq X_0(N),\qquad Y \coloneq X_0(N)^+.
\]
The Fricke quotient $X\to Y$ has degree two, with $g(X)=81$ and
$g(Y)=36$. Moreover, $Y(\Q)\neq\varnothing$, since the image of a
rational cusp is rational.

Suppose that $X$ geometrically embeds in $\P^1\times\P^1$ with
bidegree $(10,10)$. \cref{prop:balanced-double-quotient}
gives either a geometric degree-five map $Y\to\P^1$ or a geometric
smooth plane model of $Y$ of degree ten.

In the first case, \cref{corollary: map_over_k} applies because $5$ is prime and $g(Y)=36>(5-1)^2$. It yields a degree-five map to $\P^1$ defined over $\Q$.

In the second case, the smooth plane model descends to $\Q$ by
\cite[Remark~20]{RX18}: its degree is greater than three and is not
divisible by three. Projection from the image of a rational point of
$Y$ then gives a degree-nine map to $\P^1$ defined over $\Q$.
Thus either alternative implies $\gon_{\Q}(Y)\leq 9$.

The following point counts contradict this bound:
\begin{center}
\begin{tabular}{rrrr}
\toprule
$N$ & $q$ & $\#Y(\F_q)$ & $9(q+1)$ \\
\midrule
$664$ & $9$ & $102$ & $90$ \\
$977$ & $4$ & $50$ & $45$ \\
\bottomrule
\end{tabular}
\end{center}
Indeed, in each row the characteristic of $\F_q$ does not divide
$N$, so $X$ has good reduction there. Since $g(Y)>0$, \cref{lemma : good_reduction_lemma} gives
good reduction for $Y$ as well. \cref{NO-Fp} now implies
$\gon_{\Q}(Y)>9$, a contradiction.
\end{proof}

\section{Shimura Curves $X_0^D(N)$}

Using the Abramovich--Kim--Sarnak bound, we are left to analyze

\[
2 \le a \le b \le \frac{9167a}{975(a-1)},
\]

which yields the following bidegrees:

\begin{itemize}
    \item $(2,b)$, 
    \item $(3,b)$,
    \item $a=4, \quad 4 \le b \le 12$,
    \item $a \in \{5,6 \}, \quad a \le b \le 11$,
    \item $a \in \{ 7,8,9,10 \}, \quad a \le b \le 10$.
\end{itemize}

Note that $g \le 81$.

A lower bound for the genus is given by the following: 
\begin{lemma}{\cite[Lemma 10.6]{Saia24}}
\label{lemma: genus_upper_bound}
For $D>1$ an indefinite rational quaternion discriminant and $N \in \mathbb{Z}_{\ge 1}$ relatively prime to $D$, we have
\[ g(X_0^D(N)) > 1 + \frac{DN}{12}\left( \frac{1}{e^\gamma \log\log(DN) + \frac{3}{\log\log{6}}} \right)- \frac{7\sqrt{DN}}{3},\]
where $\gamma$ is the Euler constant.
\end{lemma}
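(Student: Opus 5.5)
The plan is to start from the classical genus formula for Shimura curves and bound its two pieces separately. For $D>1$ (so there are no cusps) and $\gcd(D,N)=1$,
\[
g(X_0^D(N)) = 1 + \frac{\phi(D)\psi(N)}{12} - \frac{e_2}{4} - \frac{e_3}{3},
\]
where $\phi(D)=D\prod_{p\mid D}(1-1/p)$ and $\psi(N)$ is as in \cref{lemma: Ogg-Fp}. The elliptic point counts are
\[
e_2=\prod_{p\mid D}\Bigl(1-\bigl(\tfrac{-4}{p}\bigr)\Bigr)\prod_{p\mid N}\Bigl(1+\bigl(\tfrac{-4}{p}\bigr)\Bigr),
\qquad
e_3=\prod_{p\mid D}\Bigl(1-\bigl(\tfrac{-3}{p}\bigr)\Bigr)\prod_{p\mid N}\Bigl(1+\bigl(\tfrac{-3}{p}\bigr)\Bigr),
\]
with the usual convention that $e_2=0$ if $4\mid N$ and $e_3=0$ if $9\mid N$. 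The main term will give the $\frac{DN}{12}(\cdots)$ expression and the elliptic terms will give the $\frac{7\sqrt{DN}}{3}$ correction.

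\emph{Main term.} Since $\psi(N)\ge N\ge \phi(N)$ and $\gcd(D,N)=1$, we have $\phi(D)\psi(N)\ge \phi(D)\phi(N)=\phi(DN)$. We then invoke the Rosser--Schoenfeld lower bound
\[
\phi(n) > \frac{n}{e^\gamma\log\log n + \frac{3}{\log\log n}}, \qquad n\ge 3 .
\]
Since $D\ge 6$, we have $DN\ge 6$, so $\log\log(DN)\ge \log\log 6>0$ and hence $\frac{3}{\log\log(DN)}\le\frac{3}{\log\log 6}$. Enlarging the denominator in this way yields
\[
\frac{\phi(D)\psi(N)}{12} > \frac{DN}{12}\cdot\frac{1}{e^\gamma\log\log(DN)+\frac{3}{\log\log 6}} .
\]

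\emph{Elliptic terms.} Each local factor of $e_2$ and $e_3$ lies in $\{0,1,2\}$, so $e_2,e_3\le 2^{\omega(DN)}$. This gives $\frac{e_2}{4}+\frac{e_3}{3}\le \frac{7}{12}2^{\omega(DN)}$, and it suffices to show $2^{\omega(n)}\le 4\sqrt n$. Writing $\frac{2^{\omega(n)}}{\sqrt n}\le\prod_{p\mid n}\frac{2}{\sqrt p}$, every factor with $p\ge 5$ is less than $1$. Only $p=2,3$ contribute factors larger than $1$, and these are bounded by $\sqrt2\cdot\frac{2}{\sqrt3}<2$. Hence $\frac{e_2}{4}+\frac{e_3}{3}<\frac{7}{12}\cdot 2\sqrt{DN}\le\frac{7\sqrt{DN}}{3}$, with plenty of room.

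Combining the two estimates gives the strict inequality. The only delicate step is quoting the Rosser--Schoenfeld inequality in a form valid for all $n\ge3$: the sharper constant $2.50637$ has a single exception, but the constant $3$ does not. The rest is routine bookkeeping.
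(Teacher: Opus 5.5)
Your proof is correct and complete. The paper does not prove this lemma; it quotes it from Saia's Lemma 10.6, so there is no in-paper argument to compare against.

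Your route is the natural one behind the statement:
\begin{itemize}
\item the genus formula with no cusps;
\item the bound $\phi(D)\psi(N)\ge\phi(DN)$, using coprimality;
\item the Rosser--Schoenfeld bound with constant $3$, which is valid for all $n\ge 3$, together with $\log\log(DN)\ge\log\log 6$ since $D\ge 6$;
\item the bound $e_2,e_3\le 2^{\omega(DN)}$, since every local factor lies in $\{0,1,2\}$.
\end{itemize}

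Your estimate $2^{\omega(n)}<2\sqrt{n}$ in fact gives an elliptic error term of $\frac{7}{6}\sqrt{DN}$. The stated $\frac{7}{3}\sqrt{DN}$ is what one obtains from the cruder bound $e_2,e_3\le 4\sqrt{DN}$, for example by bounding $2^{\omega}$ separately on $D$ and on $N$. So your inequality holds with a factor of two to spare.
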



If $DN \ge 87,641$ then $g(X_0^D(N)) > 81$. Thus we only need to check  $DN \le 87,640$, for which $X_0^D(N)$ is not geometrically hyperelliptic.

There are $457$ such candidate pairs $(D,N)$.

If $a = 2$ and $b \ge 3$ then $X_0^D(N)$ is geometrically hyperelliptic. All such curves were classified in \cite{GY17}. They embed in $\P^1 \times \P^1$ with bidegree $(2,g+1)$.

If $a=3$ then $X_0^D(N)$ is geometrically trigonal, and they were classified in \cite[Theorem 7.4]{PS25a}. There are three genus $2$ curves, and the genus $4$ curves $X_0^{106}(1)$ and $X_0^{118}(1)$.

Many of the candidates with bidegree $(4,b)$ are excluded using \cite[Table 13]{MPSS25}.

\begin{proposition} The Shimura curves $X_0^6(35)$ and $X_0^{10}(21)$ admit a smooth model in  $\P^1 \times \P^1$ of bidegree $(4,4)$.
\end{proposition}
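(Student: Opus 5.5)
The plan is to reverse the argument of \cref{prop:balanced-double-quotient} with $n=4$. Let $q\colon \P^1\times\P^1\to \Sym^2(\P^1)\simeq \P^2$ be the quotient by the factor interchange $s$. It is a double cover branched along a smooth conic $\Delta$, the image of the diagonal, and $q^*\mathcal O_{\P^2}(1)\simeq \mathcal O(1,1)$. Take a smooth plane quartic $B$ meeting $\Delta$ transversally in $8$ points. Then $q^{-1}(B)$ is a smooth curve of bidegree $(4,4)$, hence of genus $9$. It is the double cover of $B$ branched at $B\cap\Delta$ and attached to the square root $\mathcal O_B(1)=\omega_B$ of $\mathcal O_B(B\cap \Delta)$. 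Conversely, let $\pi\colon X\to Y$ be a double cover of a smooth plane quartic $Y$ whose branch divisor is $R=Y\cap\Delta$ for some smooth conic $\Delta$ and with $\pi_*\mathcal O_X\simeq\mathcal O_Y\oplus\omega_Y^{-1}$. Then $X\simeq q^{-1}(Y)$, because a double cover is determined by its branch divisor together with the chosen square root. So it suffices, for $X=X_0^6(35)$ and $X=X_0^{10}(21)$, to exhibit such an involution and quotient.

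The first step is to use \cref{thm: Ogg_fixed_pts} to compute the number $r_m$ of fixed points of every Atkin--Lehner involution $w_m$ with $m\parallel DN$, and to pick one with $r_m=8$. By Riemann--Hurwitz, $g(X/\langle w_m\rangle)=(2\cdot 9+2-8)/4=3$. This is also the count forced by the last row of \cref{lem:fixed-point-obstruction-P1xP1}, since $2a=8$, so such an $m$ must exist if the proposition is true. The second step is to check that $Y=X/\langle w_m\rangle$ is not hyperelliptic. I would do this using the quotient genera and hyperelliptic lists already available (\cite{GY17}), or by showing that the other Atkin--Lehner involutions descend to $Y$ with fixed-point counts incompatible with hyperellipticity. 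Then $Y$ is a smooth plane quartic via its canonical embedding.

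The third step, which I expect to be the main obstacle, is to check that the double cover has the right shape. This means two things: the $8$ branch points should lie on a smooth conic, i.e.\ $R\in|2K_Y|$ is cut out by a smooth conic, and the square root $\eta$ with $\pi_*\mathcal O_X=\mathcal O_Y\oplus\eta^{-1}$ should be $\omega_Y$ rather than $\omega_Y\otimes t$ for a nonzero $2$-torsion class $t$. Equivalently, I would show $K_X\simeq \pi^*\mathcal O_Y(2)$, and that the $\sigma$-invariant part of $H^0(K_X)$ is the full $6$-dimensional $H^0(\mathcal O_Y(2))$ pulled back. I would first try to do this computationally: obtain the canonical model of $X$ in $\P^8$ together with the action of $w_m$ (e.g.\ from equations of the genus $3$ quotient and a known model), compute the branch points on the quartic model of $Y$, and verify that they satisfy a quadratic equation defining a smooth conic.

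An alternative check, independent of descriptions of $\eta$, is to produce directly two distinct base-point-free pencils $L_1\neq L_2$ of degree $4$ with $L_1\otimes L_2\simeq \omega_X$. Then $(\phi_{L_1},\phi_{L_2})$ maps $X$ birationally onto a curve of bidegree $(4,4)$, which has arithmetic genus $9=g(X)$ and is therefore smooth, exactly as in the proof of \cref{lem:low-gonality-embedding}. In the covering picture these are the two rulings, interchanged by $w_m$, so it suffices to find one $g^1_4$ on $X$ whose $w_m$-conjugate is residual to it in $|K_X|$. I would verify this in Magma, for instance by detecting the corresponding rank-$4$ quadrics in the ideal of the canonical model of $X$.
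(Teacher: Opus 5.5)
Your plan cannot work, because the inference behind it is wrong. For $a=b=4$, the rows $(\tau,1)$ and $(1,\tau)$ of \cref{lem:fixed-point-obstruction-P1xP1} also give $2b=2a=8$ fixed points. So $r_m=8$ does not mean that $w_m$ swaps the factors, and for these two curves no $w_m$ does.

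Run Ogg's formula as in your first step. For $(6,35)$ you get $r_m=8$ exactly for $m\in\{6,70,105,10,21,210\}$, and $r_m=0$ otherwise. These six are the nontrivial elements of the Klein four-groups $W_1=\langle w_6,w_{70}\rangle$ and $W_2=\langle w_{10},w_{21}\rangle$. Each such $w_m$ has quotient genus $3$, so Accola's relation $9=3+3+3-2g(X/W_i)$ gives $X/W_i\simeq\P^1$. The same happens for $(10,21)$ with $W_1=\langle w_3,w_{35}\rangle$ and $W_2=\langle w_5,w_{42}\rangle$. Hence every $X/\langle w_m\rangle$ with $r_m=8$ maps with degree two onto $X/W_i\simeq\P^1$. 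It is a hyperelliptic genus-$3$ curve, never a plane quartic, so your second step fails for every admissible $m$.

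The conceptual reason is Serrano's theorem: since $C^2=32>25$, the only $g^1_4$'s on a smooth $(4,4)$-curve are its two rulings. So in any $(4,4)$-model of $X$, the rulings are the pullbacks $L_i$ of $\mathcal{O}(1)$ under $X\to X/W_i$. The Atkin--Lehner group is abelian, so each $w_m$ descends to $X/W_i$ and fixes $L_i$; no Atkin--Lehner involution acts as the swap $s$. Your fallback, a $g^1_4$ whose $w_m$-conjugate is a different $g^1_4$, fails for the same reason.

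There are two further slips in that fallback. First, adjunction gives $\omega_X\simeq\mathcal{O}(2,2)|_X=(L_1\otimes L_2)^{\otimes 2}$, so $L_1\otimes L_2\simeq\omega_X$ is impossible by degree ($8\neq 16$). Second, $L_1\not\simeq L_2$ alone does not make $(\phi_{L_1},\phi_{L_2})$ birational; you must still exclude a degree-two map onto a $(2,2)$-curve.

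The missing idea is to use the Atkin--Lehner involutions to produce the two pencils, not to interchange them. The paper takes the two quotient maps $f_i\colon X\to X/W_i\simeq\P^1$, each of degree $4$. Since $W_1\cap W_2=\{\id\}$, Galois theory gives $\C(X)^{W_1}\cdot\C(X)^{W_2}=\C(X)^{W_1\cap W_2}=\C(X)$. So $(f_1,f_2)$ is birational onto its image, which has bidegree $(4,4)$ and arithmetic genus $9=g(X)$, hence is smooth. This needs only genus computations for quotients: no equations, branch loci or theta characteristics. The first half of your fallback (two distinct degree-$4$ pencils plus a genus count) has the right shape, and $W_1$, $W_2$ are exactly where the pencils come from.
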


\begin{proof}
For $(D,N) = (6,35)$ use $W_1 \coloneq \langle w_6, w_{70} \rangle$ and $W_2 \coloneq \langle w_{10},w_{21} \rangle$, while for $(D,N) = (10,21)$ use $W_1 \coloneq \langle w_3, w_{35} \rangle$ and $W_2 \coloneq \langle w_{5},w_{42} \rangle$. In both cases $g(X_0^D(N)/W_1) = g(X_0^D(N)/W_2) = 0$ and $W_1 \cap W_2 = \{\id \}$. The quotient maps $X \to X/W_i \simeq \P^1$ have degree $4$ for $i \in \{ 1 ,2 \}$ and since the two subgroups intersect trivially, the two quotient maps generate the full function field, so their product is birational onto its image. Therefore the image has bidegree $(4,4)$ and since $g(X_0^D(N)) = (4-1)^2$, the birational image has arithmetic genus equal to the genus of $X_0^D(N)$, hence it is smooth.
\end{proof}

\begin{proposition} \label{prop: 14-17)}
The Shimura curve $X_0^{14}(17)$ admits a smooth model in  $\P^1 \times \P^1$ of bidegree $(4,4)$.
\end{proposition}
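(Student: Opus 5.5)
We follow the strategy of the preceding proposition for $X_0^6(35)$ and $X_0^{10}(21)$. We look for two subgroups $W_1,W_2\subset W_0(14;17)=\langle w_2,w_7,w_{17}\rangle\cong(\Z/2\Z)^3$ such that $W_1\cap W_2=\{\id\}$ and both quotients have genus $0$. Since $W_0(14;17)$ has order $8$, such a pair would have to consist of a subgroup of order $4$ and a subgroup of order $2$, giving maps of degrees $4$ and $2$. That is impossible here, because $X_0^{14}(17)$ has genus $9$ and is not geometrically hyperelliptic. So the pure Atkin--Lehner construction fails.

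We therefore aim for two independent degree-four maps to $\P^1$, obtained by factoring one or both through a genus-$0$ or genus-$1$ quotient.

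The first step is to compute, via Ogg's \cref{thm: Ogg_fixed_pts} and Riemann--Hurwitz, the number of fixed points $r_m$ of each $w_m$ for $m\in\{2,7,14,17,34,119,238\}$. From these counts we get the genera of all quotients $X/W$. We expect some order-$4$ subgroup $W_1$ with $g(X/W_1)=0$; this gives a degree-$4$ map $f_1\colon X\to\P^1$.

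For the second map, suppose some involution $w$ has $g(X/\langle w\rangle)$ equal to $1$, or at least small. If that quotient $E=X/\langle w\rangle$ is elliptic, we compose $X\to E$ with a degree-$2$ map $E\to\P^1$ given by a pencil $|P+Q|$. Varying the pencil gives a one-parameter family of degree-$4$ maps $f_2$ not factoring through $X/W_1$.

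If no such genus-$1$ quotient exists, we fall back on a direct count. By \cref{lem:fixed-point-obstruction-P1xP1}, a $(4,4)$ embedding forces every $w_m$ to have $r_m\in\{0,4,8\}$, so we first check that the computed counts are compatible with this. Then we compute a canonical model of $X_0^{14}(17)$ in $\P^8$, for instance from the Magma code in \cite{Repo} via quaternionic modular forms or via equations of Guo--Yang type for the quotients. From that model we compute the Betti number $\beta_{2,4}$. By \cite[Theorem 4.1 and Theorem 4.4]{Schreyer91}, as used for the genus-$9$ modular curves above, a value different from $5$ shows that the $g^1_4$ is not unique. We then exhibit a second $g^1_4$, namely a ruling on the rank-$4$ quadric in the canonical ideal, that differs from $f_1$.

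Given two distinct base-point-free $g^1_4$'s $f_1,f_2$ with no common factorization, the closing argument is the same as in the previous proposition. We must check that $f_1$ and $f_2$ are not both pulled back through a common degree-$2$ quotient; this follows if $f_2$ does not factor through $X/W_1$ or its subquotients. Then $(f_1,f_2)$ is birational onto its image, which has bidegree $(4,4)$. Its arithmetic genus is $9=g(X)$, so the image is smooth.

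The main obstacle is producing the second independent degree-four map, since the Atkin--Lehner group alone cannot supply it. The most delicate point is verifying that the map coming from a genus-$1$ quotient, or from the syzygy computation, is genuinely independent of $f_1$. It must not be the same pencil, and the two maps must not share a common intermediate cover.
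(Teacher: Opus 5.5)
\textbf{There is a genuine gap.} You never actually produce the two degree-four pencils, and the routes you propose for producing them do not exist for this curve.

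Carry out your own first step with Ogg's formula. It gives $r_2=8$ ($4$ points with CM by $\Z[i]$ and $4$ with CM by $\Z[\sqrt{-2}]$) and $r_{238}=h(-952)=8$. It also gives $r_7=r_{14}=r_{17}=r_{34}=r_{119}=0$. Hence $g(X/\langle w_2\rangle)=g(X/\langle w_{238}\rangle)=3$, and the other five involution quotients have genus $5$. The relation $g(X)+2g(X/H)=\sum_{1\neq w\in H}g(X/\langle w\rangle)$ then gives genera $2,2,1,3,2,2,3$ for the seven order-four quotients, and $g(X^*)=1$.

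So there is no $W_1$ of order four with $g(X/W_1)=0$, and no involution quotient of genus one. The genus-one quotients $X/\langle w_2,w_{238}\rangle$ and $X^*$ only yield maps to $\P^1$ of degree $8$ or $16$.

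The fallback is not a proof either:
\begin{itemize}
\item You give no way to obtain a canonical model of $X_0^{14}(17)$ in $\P^8$.
\item $\beta_{2,4}\neq 5$ is equally consistent with there being no $g^1_4$ at all (e.g.\ $\beta_{2,4}=0$).
\item In genus $9$ the canonical ideal contains $21$ quadrics, and every $g^1_4$ contributes rank-four quadrics as $2\times 2$ minors of a $2\times 6$ matrix. So there is no distinguished rank-four quadric whose second ruling you could read off, as there is in genus $4$.
\end{itemize}

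\textbf{The missing idea} is to factor through the two genus-three quotients. $Y=X/\langle w_2\rangle$ double covers the genus-two curve $X/\langle w_2,w_7\rangle$. Likewise $Z=X/\langle w_{238}\rangle$ double covers the genus-two curve $X/\langle w_7,w_{238}\rangle$. By Riemann--Hurwitz these covers are unramified, and a genus-three \'etale double cover of a genus-two curve is hyperelliptic (cf.\ \cite[Theorem 3.4]{Pol06}). Composing with the hyperelliptic maps gives degree-four maps $\pi_1\colon X\to Y\to\P^1$ and $\pi_2\colon X\to Z\to\P^1$.

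\textbf{Your independence check also needs repair.} Let $e$ be the degree of $(\pi_1,\pi_2)$ onto its image.
\begin{itemize}
\item If $e=2$, the image has bidegree $(2,2)$, so $X$ double covers a curve of genus at most one. That degree-two map need not be an Atkin--Lehner quotient, so checking that $f_2$ does not factor through $X/W_1$ does not exclude this case. It is excluded because $X_0^{14}(17)$ is neither hyperelliptic nor bielliptic \cite{GY17,PS25a}.
\item If $e=4$, the common pencil is invariant under $\langle w_2,w_{238}\rangle$. It would then descend to a degree-one map from the genus-one curve $X/\langle w_2,w_{238}\rangle$ to $\P^1$, which is impossible.
\end{itemize}
Only after both cases are ruled out does the count $p_a=(4-1)^2=9=g(X)$ give smoothness of the $(4,4)$ image.
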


\begin{proof}
Let $X \coloneq X_0^{14}(17)$. 
Consider the quotients 
\[
Y \coloneq X/\langle w_2 \rangle, \quad Z \coloneq  X/\langle w_{238} \rangle,
\]

which are both genus $3$ curves admitting a degree $2$ map to genus $2$ curves, hence hyperelliptic by \cite[Theorem 3.4]{Pol06}. So we have two degree $4$ maps
\[
\pi_1 : X \to \P^1, \quad \pi_2 : X \to \P^1.
\]

Define a morphism $\Psi = (\pi_1,\pi_2) : X \to \P^1 \times \P^1$.

It remains to check that $\Psi$ is birational onto its image. Let the degree of $\Psi$ onto its image be $e$.
Since both coordinate projections have degree $4$ on $X$, the image has bidegree

\[
\left( \frac{4}{e},\frac{4}{e} \right),
\]

so $e|4$.

If $e = 4$, then $\pi_1$ and $\pi_2$ generate the same degree 4 pencil. The common map is invariant under both $w_2$ and $w_{238}$, hence it descends through 
\[
X \to X/\langle w_2, w_{238} \rangle.
\]

But $g(X/\langle w_2, w_{238} \rangle) = 1 \ne 0$, so the common degree $4$ map to $\P^1$ cannot arise this way. Thus $e \ne 4$.

If $e = 2$, then the image has bidegree $(2,2)$. The normalization of a curve of bidegree $(2,2)$ in $\P^1 \times \P^1$ has genus at most one. Therefore $X$ would admit a degree $2$ map to a curve of genus $0$ or $1$, making $X$ hyperelliptic or bielliptic. But Guo--Yang \cite{GY17} and Padurariu--Saia \cite{PS25a} exclude $X_0^{14}(17)$ from both possibilities. Thus $e \ne 2$.

Therefore $e=1$ and $\Psi$ is birational onto its image. Since each projection has degree $4$, the image has bidegree $(4,4)$.

Since $g(X) = 9 = (4-1)(4-1)$, while a curve of bidegree $(a,b)$ in $\P^1 \times \P^1$ has arithmetic genus $(a-1)(b-1)$, it follows that the arithmetic genus of the image equals the genus of $X$, so the birational image cannot have singularities. Hence $\Psi$ identifies $X$ with a smooth bidegree $(4,4)$ curve in $\P^1 \times \P^1$.

\end{proof}

After applying all filters deriving from the Castelnuovo--Severi inequality and \cref{lem:fixed-point-obstruction-P1xP1}, we are left with the $29$ remaining pairs $(D,N)$:

\[
\begin{aligned}
&(34, 7),\ (38, 5),\ (133, 1),\ (145, 1),\ (177, 1),\ (226, 1),\ (62, 5),\ (94, 3),\\
&(217, 1),\ (267, 1),\ (382, 1),\ (51, 7),\ (301, 1),\ (394, 1),\ (694, 1),\ (6, 157),\\
&(622, 1),\ (6, 271),\ (6, 277),\ (38, 29),\ (58, 19),\ (21, 32),\ (38, 31),\ (39, 23),\\
&(745, 1),\ (1365, 1),\ (889, 1),\ (955, 1),\ (1149, 1).
\end{aligned}
\]

\begin{proposition} 
 The curve $X_0^D(N)$ does not geometrically embed in $\P^1 \times \P^1$ with bidegree $(6,10)$ for 

\[
(D,N) \in \{ (6,271),(6,277),(38,29),(58,19) \}.
\]

\end{proposition}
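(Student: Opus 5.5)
The plan is to use \cref{lemma: 2a_obstruction}, as in the modular cases $N=332,423,514,586$. The genus here is $(6-1)(10-1)=45$ and $a=6\neq b=10$. By \cref{lem:fixed-point-obstruction-P1xP1} with $a,b$ both even and $a<b$, every involution of $X\coloneq X_0^D(N)$ must satisfy
\[
r(\iota)\in\{0,4,12,20\}.
\]
The first step is therefore to compute, using Ogg's formula (\cref{thm: Ogg_fixed_pts}), the fixed-point counts $r_m$ of all Atkin--Lehner involutions $w_m$ with $m\parallel DN$. In each case $DN$ has three prime factors, so there are seven nontrivial $w_m$. Each count needs class numbers of $\Z[\sqrt{-m}]$ (and $\Z[\tfrac{1+\sqrt{-m}}{2}]$ when $m\equiv 3 \bmod 4$) and the local embedding numbers $\nu_p$. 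The $\nu_p$ are $1-\left(\frac{R}{p}\right)$ for $p\mid D$ and $1+\left(\frac{R}{p}\right)$ for $p\mid N$, with the usual modifications at $2$. As a cross-check, each count can be compared against Magma's genus of the quotient $X/\langle w_m\rangle$ via Riemann--Hurwitz.

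If some $r_m$ lies outside $\{0,4,12,20\}$, that single involution already rules out the bidegree $(6,10)$ by \cref{lem:fixed-point-obstruction-P1xP1}, and we are done for that pair. Since these pairs survived the earlier filters, I expect instead that all counts are admissible. In that case I look for two distinct Hall divisors $m_1,m_2$ with $r_{m_1}=r_{m_2}=2a$ for one of the labelings $a\in\{6,10\}$, i.e.\ both equal to $12$ or both equal to $20$, such that the product involution $w_{m_1m_2/(m_1,m_2)^2}$ has a different count. \cref{lemma: 2a_obstruction}, applied with the roles of $a$ and $b$ interchanged if needed, then gives the contradiction.

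I expect the pattern $r_{p}=r_{DN}=20$ for a large prime $p\mid DN$, as in the modular examples, while $r_{DN/p}\in\{0,4\}$. For $D=6$ the natural candidates are $p\in\{271,277\}$ together with $m=6p$ and $m=6$. For $(38,29)$ and $(58,19)$ the candidates are $w_{19}$, $w_{29}$, the Fricke involution, and their complements.

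The main obstacle is the arithmetic bookkeeping. Ogg's local factors at $p=2$ depend on $m \bmod 8$ and on whether $2\mid D$, and there are two orders when $m\equiv 3\pmod 4$. An error there would change which pairs of involutions have equal counts.

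If for some pair no admissible triple $(m_1,m_2,m_1m_2/(m_1,m_2)^2)$ exists, I fall back on the Castelnuovo--Severi propositions. For an involution with $r_m=20$ the quotient has genus $(92-20)/4=18$; combining this with the Klein four quotient $X/\langle w_{m_1},w_{m_2}\rangle$ and the criterion $g(X)>4g(X/\langle\sigma_1,\sigma_2\rangle)+3(d-1)$ is not directly available since $a,b$ are both even. So I would instead argue as in the proof of \cref{lemma: 2a_obstruction}. The extensions $(1,\tau_i)$ of two such involutions commute, so the $10$-gonal ruling factors through the quotient by the group they generate. That quotient would then carry a map to $\P^1$ of degree $10/4$, which is impossible, giving the desired contradiction in that case.
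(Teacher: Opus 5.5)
Your plan has a genuine gap. For none of the four pairs do the Atkin--Lehner fixed-point counts contain two distinct involutions with the same value $12=2a$ or $20=2b$. So neither \cref{lemma: 2a_obstruction} nor your fallback can be triggered, since the fallback also needs two commuting involutions of the same type $(1,\tau_i)$. Carrying out Ogg's count gives:
\begin{itemize}
\item $(6,271)$: $(r_2,r_3,r_6,r_{271},r_{542},r_{813},r_{1626})=(0,4,4,0,0,12,20)$, where $r_{813}=h(-3252)=12$;
\item $(6,277)$: $r_{277}=2h(-1108)=12$, $r_{1662}=20$, $r_2=r_3=4$, and the remaining counts are $0$;
\item $(38,29)$: $r_{38}=12$, $r_{1102}=20$, $r_2=r_{58}=4$, and the rest are $0$;
\item $(58,19)$: $r_{29}=12$, $r_{1102}=20$, $r_2=r_{19}=4$, and the rest are $0$.
\end{itemize}
These counts pass Riemann--Hurwitz for every Klein four subgroup and agree with the Fricke quotient having genus $18$. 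So your expected pattern $r_p=r_{DN}=20$ does not occur (e.g.\ $r_{271}=0$), and every count lies in the admissible set $\{0,4,12,20\}$. Moreover, the product of the unique $12$-involution with the Fricke involution always has $r=0$, exactly as $(1,\tau)\cdot(\tau',1)=(\tau',\tau)$ predicts. No argument based only on these fixed-point counts can finish, which is why these pairs survived the filters.

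The missing idea is to exploit the one involution you did notice. The Fricke quotient $Y=X/\langle w_{DN}\rangle$ has genus $18$, and $g(X)=45>2\cdot 18+(6-1)$. Hence \cref{theorem: CS} forces the degree-$6$ ruling to factor through $X\to Y$. Equivalently, $r_{DN}=20=2b$ forces the extension $(\tau,1)$ in \cref{lem:fixed-point-obstruction-P1xP1}. Either way, $Y$ would be geometrically trigonal.

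Nothing geometric you have contradicts this, so an arithmetic input is needed. $Y$ has a $\Q$-rational point (the image of a CM point) and $g(Y)\geq 5$, so a geometric $g^1_3$ descends to $\Q$ by \cite[Corollary 4.6]{NO24}. A point count at a good prime then contradicts \cref{NO-Fp}, e.g.\ $\#Y(\F_{25})=114>3\cdot 26$ for $(6,271)$, with analogous counts over $\F_7$, $\F_9$, $\F_3$ for the other three pairs. This is the paper's proof.
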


\begin{proof}
Assume for a contradiction that $X_0^6(271)$ is $6$-gonal. Since the genus of $X_0^6(271)/\langle w_{1626} \rangle$ is $18$, it follows from the Castelnuovo--Severi inequality that the $6$-gonal map $X_0^6(271) \to \P^1$ factors through $X_0^6(271)/\langle w_{1626} \rangle$, hence $X_0^6(271)/\langle w_{1626} \rangle$ must be geometrically trigonal. Since $X_0^6(271)/\langle w_{1626} \rangle$ has a CM point defined over $\Q$, it follows that $X_0^6(271)/\langle w_{1626} \rangle$ must be trigonal over $\Q$. But
\[
\#(X_0^6(271)/\langle w_{1626} \rangle)(\F_{25}) = 114 > 3(25+1) = 78,
\]
contradiction.
A similar argument works for the remaining pairs, using the data from the table below. The number of $\F_p$-points was computed using the code that accompanies \cite{MPSS25}. To check the existence of a $\Q$-rational point coming from the image of a CM point we use the code from \cite{PS25b}.

{
\begin{longtable}{|c|c|c|c|c|}   \hline 
$(D,N)$ & $m$ & $g(X_0^D(N)/\langle w_{m} \rangle)$ & $\F_q$ & $\#(X_0^D(N)/\langle w_{m} \rangle)(\F_q)$  \\ \hline \hline
$(6,277)$ & $6 \cdot 277$ & $18$ & $\F_7$ & $32$  \\ \hline
$(38,29)$ & $38 \cdot 29$ & $18$ & $\F_9$ & $68$  \\ \hline
$(58,19)$ & $58 \cdot 19$ & $18$ & $\F_3$ & $16$  \\ \hline
\end{longtable}
}

\end{proof}

\begin{proposition}
    $X_0^{889}(1)$ does not embed in $\P^1 \times \P^1$ with bidegree $(8,10)$.
\end{proposition}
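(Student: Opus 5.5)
We argue by contradiction: suppose $X \coloneq X_0^{889}(1)$ embeds in $\P^1\times\P^1$ with bidegree $(8,10)$. Then $g(X)=7\cdot 9=63$, and by \cite[Theorem 1.2]{CDJP19} we have $\gon_{\C}(X)=8$, realized by the first ruling map $f\colon X\to\P^1$. Here $889=7\cdot127$, so the Atkin--Lehner group is $W=\{1,w_7,w_{127},w_{889}\}$. The plan is to force $f$ to factor through an Atkin--Lehner quotient and then contradict this with the low gonality it would give that quotient. Only if this fails will we fall back on the fixed-point obstruction, \cref{lem:fixed-point-obstruction-P1xP1} and \cref{lemma: 2a_obstruction}.

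\emph{Step 1 (Castelnuovo--Severi).} We compute $g(X/\langle w_m\rangle)$ for $m\in\{7,127,889\}$ from Ogg's fixed-point counts (\cref{thm: Ogg_fixed_pts}) and Riemann--Hurwitz. We then choose $m$ with
\[
63>2\,g(X/\langle w_m\rangle)+7,
\]
that is, with $g(X/\langle w_m\rangle)\le 27$. For this $m$, \cref{theorem: CS} applied to $f$ and to the quotient map $X\to X/\langle w_m\rangle$ shows that both maps factor through a common curve of degree $>1$. Since the quotient map has degree two, this common factor is $X\to X/\langle w_m\rangle$ itself. Hence $f$ factors through it, and $Y\coloneq X/\langle w_m\rangle$ is geometrically $4$-gonal.

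For $m=889$ the fixed points are CM points by $\Z[\sqrt{-889}]$ and $\Z[(1+\sqrt{-889})/2]$, so their number is roughly $h(-889)+h(-4\cdot 889)$. This should make $g(X^+)$ well below $27$, and $m=889$ is the case we will try first.

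\emph{Step 2 (descending the tetragonal map).} We want to rule out $\gon_{\C}(Y)=4$, and we argue over $\Q$. By the Tower \cref{thm: tower-theorem}, a geometric degree-$4$ map on $Y$ gives a $\Q$-rational morphism $Y\to Y'$ of degree $d'\mid 4$ with $g(Y')\le(4/d'-1)^2$. We treat the possible values of $d'$ in turn.

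For $d'=4$, $Y'$ has genus $0$. If $Y$ has a rational CM point (checked with the code of \cite{PS25b}), then $Y'\simeq\P^1_{\Q}$, so $\gon_{\Q}(Y)\le 4$. This is refuted by \cref{NO-Fp} using a point count $\#\widetilde Y(\F_q)>4(q+1)$, computed with the code of \cite{MPSS25}.

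For $d'=2$, $Y'$ has genus at most $1$. Either $Y$ is a double cover of $\P^1$ (already excluded) or $Y$ is bielliptic. We exclude the latter by the inequality $\#\widetilde Y(\F_q)\le 2(q+1+2\sqrt q)$, again via \cref{lemma : good_reduction_lemma}.

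For $d'=1$ we would need $g(Y)\le 9$, which is false once $g(Y)$ is large enough. If $g(Y)\le 9$, we instead use the known lists of low-gonality Atkin--Lehner quotients.

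\emph{Main obstacle.} The hard part is having the right numerical data. We need a quotient of genus $\le 27$ so that Castelnuovo--Severi applies, and we need a prime with $p\nmid 889$ and enough $\F_q$-points on $Y$. If $Y$ has no rational point, or the point counts are too small, then we will try the other involutions $w_7$ and $w_{127}$. We will also try the full quotient $X^*=X/W$ through the three-involution version of Castelnuovo--Severi, which would force a degree-$2$ map from $X^*$. Failing all of these, we will use \cref{lemma: 2a_obstruction} with $a=8$: if two of $r_7,r_{127},r_{889}$ equal $16$ (or $20$) while the third does not, no $(8,10)$ embedding exists.
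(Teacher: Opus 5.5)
\emph{Assessment.} Your skeleton matches the paper's proof: Castelnuovo--Severi pushes the $8$-gonal map through an Atkin--Lehner quotient $Y$, and point counts then contradict $\gon_{\C}(Y)\le 4$. However, the numerics in Step~1 are off, and the decisive Step~2 does not close as planned.

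\textbf{Step 1.} Since $889\equiv 1\pmod 4$, only $\Z[\sqrt{-889}]$ contributes to the fixed points of $w_{889}$; there is no order $\Z[(1+\sqrt{-889})/2]$. Hence $r_{889}=h(-3556)=16$ and $g(X/\langle w_{889}\rangle)=28$. This only gives $63\le 2\cdot 28+7$, so no factorization is forced. Also $r_7=0$ because $(-7/127)=1$, so $g(X/\langle w_7\rangle)=32$. The only usable involution is $w_{127}$: $r_{127}=2\bigl(h(-127)+h(-508)\bigr)=20$, so $Y=X/\langle w_{127}\rangle$ has genus $27$ and $63>61$. Your selection rule would find this, so this part is repairable.

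\textbf{Step 2 (the genuine gap).} Your $d'=4$ branch needs a rational point on $Y$ to identify $Y'$ with $\P^1_{\Q}$. But $X/\langle w_{127}\rangle$ has no rational CM point, and every fallback you list fails. The involutions $w_7$ and $w_{889}$ give quotients too large for Castelnuovo--Severi. The bound for $X\to X^*$ would need $63>4\cdot 12+3\cdot 7=69$, and the paper's two-involution version requires an odd degree anyway. Finally, $(r_7,r_{127},r_{889})=(0,20,16)$ lies in the allowed set $\{0,4,16,20\}$ with neither $16$ nor $20$ repeated, so \cref{lemma: 2a_obstruction} says nothing.

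\textbf{The missing idea.} Gonality does not increase under quotient maps. Since $X^*=X/\langle w_7,w_{889}\rangle$ is a quotient of $Y$, you get $\gon_{\C}(X^*)\le 4$. Now $X^*$ has genus $12\ge 10$ and a rational point, namely the image of a rational CM point on $X/\langle w_{889}\rangle$. So \cref{corollary: low_gon_eq} gives $\gon_{\Q}(X^*)\le 4$. This contradicts $\#X^*(\F_9)=48>4\cdot 10$ via \cref{NO-Fp}; this is the paper's argument.

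\textbf{An alternative fix.} You could stay on $Y$ and drop the rational-point hypothesis altogether. The genus-zero case of \cref{lemma : good_reduction_lemma} already yields a map $\widetilde Y\to\P^1_{\F_p}$ of degree at most $4$, as in the paper's treatment of $D=955,1149$. But then you need an explicit count $\#\widetilde Y(\F_q)>4(q+1)$ on the genus-$27$ curve $Y$ itself. That count would also dispose of the bielliptic case, since $4(q+1)\ge 2(q+1+2\sqrt q)$.
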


\begin{proof}

Notice that \cref{lemma: 2a_obstruction} cannot be used for $(D,N) = (889,1)$.
Since the quotient by $w_{127}$ has genus $27$, by the Castelnuovo--Severi inequality it follows that any $8$-gonal map would factor through $X_0^{889}(1)/\langle w_{127} \rangle$, hence this quotient would have geometric gonality $4$. This quotient does not have a rational point coming from a CM point, but $X_0^{889}(1)/\langle w_{889} \rangle$ does, guaranteeing that $X_0^{889}(1)/\langle w_7, w_{889} \rangle$ has a $\Q$-rational point. Then, since $X_0^{889}(1)/\langle w_7, w_{889}\rangle$ has genus $12$, it follows that 

\[
2 \le \gon_{\C}(X_0^{889}(1)/\langle w_7, w_{889} \rangle) = \gon_{\Q}(X_0^{889}(1)/\langle w_7, w_{889} \rangle) \le 4.
\]

But 

\[
(X_0^{889}(1)/\langle w_7, w_{889} \rangle)(\F_9) = 48 > 4(9+1),
\]

hence $\gon_{\Q}(X_0^{889}(1)/\langle w_7, w_{889} \rangle) > 4$, contradiction.

\end{proof}

\begin{remark}\label{remark: conjecture106}
The following (conjectural) model for $X_0^{106}(1)$ can be found in \cite{FM14}:

\[ \begin{aligned} &2x^2+12xz-6y^2-17z^2-108w^2 = 0,\\ &5x^3+33x^2z-9xy^2-45y^2z+16z^3=0. \end{aligned} \]

We note that the quadric is nonsingular, so $X_0^{106}(1)$ should embed in $\P^1 \times \P^1$ with bidegree $(3,3)$.
\end{remark}

\begin{remark} \label{remark: conjecture118}

The following is conjecturally true:

The curve $X_0^{118}(1)$ embeds in $\P^1 \times \P^1$ with bidegree $(3,3)$.

Let $K=\Q(\delta)$, where $\delta^2=-3$. Then a model for
$X_0^{118}(1)$ over $K$, in coordinates
$$
[[u_0,u_1],[v_0,v_1]]
\in
\P^1_K\times\P^1_K,
$$
is the curve $F_{3,3}=0$, where
$$
\begin{aligned}
F_{3,3}={}&
\delta\Bigl(
14178u_0^3v_0^3
-3044u_0^3v_0v_1^2
+9100u_0^2u_1v_0^2v_1
-230u_0^2u_1v_1^3\\
&\qquad
-3044u_0u_1^2v_0^3
+688u_0u_1^2v_0v_1^2
-230u_1^3v_0^2v_1
+6u_1^3v_1^3
\Bigr)\\
&+
19527u_0^3v_0^2v_1
-482u_0^3v_1^3
-19527u_0^2u_1v_0^3
+4311u_0^2u_1v_0v_1^2\\
&\qquad
-4311u_0u_1^2v_0^2v_1
+111u_0u_1^2v_1^3
+482u_1^3v_0^3
-111u_1^3v_0v_1^2.
\end{aligned}
$$

Defining equations for the genus $2$ curve $X_0^{118}/\langle w_{2} \rangle$ can be found in \cite{AH26}, while defining equations for $X_0^{118}/\langle w_{59} \rangle$ and $X_0^{118}/\langle w_{118} \rangle$, both elliptic curves over $\Q$, can be found in \cite{PS25b}. Using this information, one can conjecturally compute a model as an intersection of a cubic and a quadric with the help of ChatGPT:

$$
X_0^{118}(1)
=
\left\{
[w:x:y:z]\in \mathbf{P}^3_{\mathbf{Q}}
\;\middle|\;
\begin{array}{l}
176w^2-984wx+1371x^2+y^2+16z^2=0,\\[2mm]
(2x-w)z^2-338x^3+365wx^2-132w^2x+16w^3=0.
\end{array}
\right\}.
$$

If this model is correct, then the quadric is nonsingular, hence $X_0^{118}(1)$ embeds in $\P^1 \times \P^1$ with bidegree $(3,3)$. Moreover, the model in $\P^1 \times \P^1$ can again be conjecturally computed with the help of ChatGPT.

\end{remark}

We now prove that $X_0^{34}(7)$ does not geometrically embed in $\P^1 \times \P^1$ with bidegree $(4,4)$. We first record three facts about smooth curves of bidegree
$(4,4)$. Throughout the following lemmas, let $C\subset S:=\P^1 \times\P^1$ be such a curve, and let $p_1,p_2:S\to\P^1$
denote the two projections.

\begin{lemma}\label{lem:44-unique-extension}
Every $\sigma\in\operatorname{Aut}(C)$ extends uniquely
to an automorphism $\widetilde{\sigma}$ of $S$ preserving
$C$. Moreover, the map
\[
    \operatorname{Aut}(C)\longrightarrow\operatorname{Aut}(S),
    \qquad \sigma\longmapsto\widetilde{\sigma},
\]
is an injective group homomorphism.
\end{lemma}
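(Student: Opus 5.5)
The plan is to apply Takahashi's extension theorem (\cref{thm: Takahashi_extension_theorem }) for existence, and then reuse the injectivity argument from the proof of \cref{lem:fixed-point-obstruction-P1xP1} for uniqueness and the homomorphism property.

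For existence, the curve $C$ has class $4C_0+4f$ on $\mathcal H_0=\P^1\times\P^1$. Here $e=0$, so the excluded cases $(e,a)=(1,b),(1,b-1)$ do not occur, and $a=b=4\geq 3$. Takahashi's theorem therefore gives, for every $\sigma\in\Aut(C)$, some $\widetilde\sigma\in\Aut(S)$ with $\widetilde\sigma(C)=C$ and $\widetilde\sigma|_C=\sigma$.

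The key step for uniqueness is that the restriction map
\[
\Aut(S,C)\longrightarrow\Aut(C),
\]
from the stabilizer of $C$ in $\Aut(S)$, is injective. I would argue as in \cref{lem:fixed-point-obstruction-P1xP1}. Write $\Aut(S)=G\rtimes\langle s\rangle$ with $G=\operatorname{PGL}_2\times\operatorname{PGL}_2$.

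\begin{itemize}
\item If $(\alpha,\beta)\in G$ fixes $C$ pointwise, then $\alpha\circ p_1|_C=p_1|_C$. Since $p_1|_C$ is surjective (it has degree $4$), $\alpha$ is the identity on $\P^1$, and likewise $\beta=1$.
\item If $(\alpha,\beta)s$ fixes $C$ pointwise, then $C$ lies in its fixed locus. That locus is contained in a graph $\{(x,y): y=\beta'(x)\}$, which is a curve of bidegree $(1,1)$. This is impossible for an irreducible curve of bidegree $(4,4)$.
\end{itemize}
Hence any two extensions $\widetilde\sigma,\widetilde\sigma'$ satisfy $\widetilde\sigma^{-1}\widetilde\sigma'|_C=\id$, so $\widetilde\sigma=\widetilde\sigma'$.

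Once uniqueness holds, the homomorphism property follows formally. Both $\widetilde\sigma\widetilde\tau$ and $\widetilde{\sigma\tau}$ preserve $C$ and restrict to $\sigma\tau$, so they are equal. The map $\sigma\mapsto\widetilde\sigma$ is also injective, since restricting $\widetilde\sigma$ back to $C$ recovers $\sigma$.

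The only step needing care is the factor-swapping case in the injectivity argument. There I must make sure the fixed locus of $(\alpha,\beta)s$ cannot contain a $(4,4)$ curve. The point is that a point fixed by $(\alpha,\beta)s$ satisfies $\alpha(y)=x$, which already forces it onto a $(1,1)$ graph.
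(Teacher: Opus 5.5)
Your proposal is correct and follows the paper's proof: Takahashi's theorem gives existence, and injectivity of the restriction map $\Aut(S,C)\to\Aut(C)$ gives uniqueness. That injectivity comes, as in the earlier lemma, from surjectivity of the projections and from the fixed locus of a swapping element lying in a $(1,1)$ graph. The homomorphism property and injectivity then follow formally from uniqueness, exactly as in the paper.
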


\begin{proof}
Existence follows from \cref{thm: Takahashi_extension_theorem },
applied to the Hirzebruch surface $\mathcal{H}_0=S$
and a curve of bidegree $(4,4)$.
The restriction homomorphism
\[
    \operatorname{Aut}(S,C)\longrightarrow
    \operatorname{Aut}(C)
\]
is injective by the argument in the proof of
\cref{lem:fixed-point-obstruction-P1xP1}, so the extension is unique.
For $\sigma,\tau\in\operatorname{Aut}(C)$, the product
$\widetilde{\sigma}\widetilde{\tau}$ restricts to
$\sigma\tau$; uniqueness therefore gives
$\widetilde{\sigma\tau}
    =\widetilde{\sigma}\widetilde{\tau}$. Finally, $\widetilde{\sigma}=1$ implies $\sigma=1$,
proving injectivity.
\end{proof}

Composing this homomorphism with the action on the two
rulings of $S$ gives a homomorphism
$ \chi_C:\operatorname{Aut}(C)\longrightarrow
    \mathbb{Z}/2\mathbb{Z}$.
Thus $\chi_C(\sigma)=0$ precisely when
$\widetilde{\sigma}$ preserves both rulings.
We call $\sigma$ \emph{swapping} if $\chi_C(\sigma)=1$.

\begin{lemma}\label{lem:44-swapping-fixed-points}
Every swapping involution of $C$ has exactly eight
fixed points. In particular, every fixed-point-free
involution belongs to $\ker\chi_C$.
\end{lemma}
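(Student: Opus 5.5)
The plan is to run the same extension-plus-classification argument as in \cref{lem:fixed-point-obstruction-P1xP1}, now specialised to $a=b=4$. Let $\sigma$ be a swapping involution of $C$. By \cref{lem:44-unique-extension}, $\sigma$ extends uniquely to an automorphism $\widetilde{\sigma}$ of $S$ that preserves $C$. Since the extension map is an injective homomorphism, $\widetilde{\sigma}^2$ is the extension of $\sigma^2=1$, so $\widetilde{\sigma}^2=1$. Because $\chi_C(\sigma)=1$, the involution $\widetilde{\sigma}$ interchanges the two factors.

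Write $\widetilde{\sigma}=(\alpha,\beta)s$ with $\alpha,\beta\in\operatorname{PGL}_2$. As in the proof of \cref{lem:fixed-point-obstruction-P1xP1}, the relation $\widetilde{\sigma}^2=1$ forces $\beta=\alpha^{-1}$. Conjugating by $(1,\alpha)\in G$ then turns $\widetilde{\sigma}$ into the standard interchange $s$, whose fixed locus is the diagonal $\Delta$, a smooth divisor of type $(1,1)$.

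Next we count $\Fix_C(\sigma)$. A point of $C$ is fixed by $\sigma$ exactly when it lies in $C\cap\Delta$, so $\Fix_C(\sigma)=C\cap\Delta$ as sets. To turn this into a count we need the intersection to be transverse, and we use the tangent-space argument from \cref{lem:fixed-point-obstruction-P1xP1}. At a point $P\in C\cap\Delta$, the differential of $\widetilde{\sigma}$ on $T_PS$ has $T_P\Delta$ as its $+1$-eigenspace and a complementary $-1$-eigenspace. The line $T_PC$ is $\widetilde{\sigma}$-stable, and $\sigma$ acts on it by $-1$, since a nontrivial involution of a smooth curve in characteristic zero acts by $-1$ on the tangent line at an isolated fixed point. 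Hence $T_PC\neq T_P\Delta$, so $C$ meets $\Delta$ transversally at $P$. We also need $C\not\subset\Delta$, which holds because $C$ is irreducible of bidegree $(4,4)\neq(1,1)$. Therefore
\[
\#\Fix_C(\sigma)=C\cdot\Delta=(4,4)\cdot(1,1)=4+4=8 .
\]

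The second claim follows at once. A fixed-point-free involution has $0\neq 8$ fixed points, so it cannot be swapping, and hence it lies in $\ker\chi_C$.

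The only delicate step is the transversality argument, which is the same as in \cref{lem:fixed-point-obstruction-P1xP1}. Everything else is bookkeeping, using the conjugacy of swapping involutions to $s$ and the intersection number $(a,b)\cdot(c,d)=ad+bc$.
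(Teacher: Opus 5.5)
Your proposal is correct and follows essentially the same route as the paper. You extend the swapping involution, conjugate it to the factor interchange (whose fixed locus is the $(1,1)$ diagonal), use transversality to get $\#\Fix_C(\sigma)=(4,4)\cdot(1,1)=8$, and conclude that a fixed-point-free involution cannot be swapping; the only difference is that the paper cites \cref{lem:fixed-point-obstruction-P1xP1} for the conjugacy and transversality steps, which you re-derive explicitly.
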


\begin{proof}
By \cref{lem:fixed-point-obstruction-P1xP1}, the extension of
a swapping involution is conjugate to the interchange
of the two factors. Its fixed locus is a curve of
bidegree $(1,1)$, which meets $C$ transversely.
Hence the number of fixed points is $(4,4)\cdot(1,1)=8$.

\end{proof}

\begin{lemma}\label{lem:44-quotient-degree-two}
Let $\sigma$ be an involution of $C$ such that
$\chi_C(\sigma)=0$ and
$\#\operatorname{Fix}_C(\sigma)>4$.
Then $C/\langle\sigma\rangle$ admits a morphism
of degree two to $\mathbb{P}^1$.
In particular, if its genus is at least two,
then it is hyperelliptic.
\end{lemma}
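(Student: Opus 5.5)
By \cref{lem:44-unique-extension}, $\sigma$ extends uniquely to an involution $\widetilde{\sigma}$ of $S$ preserving $C$. The extension is an involution because the extension map is an injective homomorphism. Since $\chi_C(\sigma)=0$, we can write $\widetilde{\sigma}=(\alpha,\beta)\in\operatorname{PGL}_2\times\operatorname{PGL}_2$ with $\alpha^2=\beta^2=1$, and not both trivial.

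We then run through the classification of \cref{lem:fixed-point-obstruction-P1xP1} with $a=b=4$. If both $\alpha$ and $\beta$ were nontrivial, $\widetilde{\sigma}$ would be conjugate to $(\tau,\tau)$. Its fixed locus on $S$ would then be four points, forcing $\#\operatorname{Fix}_C(\sigma)\le 4$, which contradicts the hypothesis. Hence exactly one of $\alpha,\beta$ is trivial, say $\alpha=1$ after possibly relabelling the factors. The case $\beta=1$ is symmetric.

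Now consider the ruling map $p_1|_C\colon C\to\P^1$, which has degree $(4,4)\cdot(1,0)=4$. Since $\widetilde{\sigma}=(1,\beta)$, we have $p_1\circ\widetilde{\sigma}=p_1$, so $p_1|_C$ is $\sigma$-invariant. It therefore factors through the quotient $\pi\colon C\to C/\langle\sigma\rangle$ as $p_1|_C=h\circ\pi$ for a morphism $h\colon C/\langle\sigma\rangle\to\P^1$. The factorization exists because the quotient is the categorical quotient of a smooth curve by a finite group, so invariant rational functions descend. Comparing degrees gives $\deg h=4/2=2$, which is the first claim. If moreover $g(C/\langle\sigma\rangle)\ge 2$, a degree-two map to $\P^1$ is exactly the definition of hyperelliptic.

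No step here is a real obstacle. The only point needing care is that the four-point case is ruled out by the fixed-point hypothesis. This is immediate from the fixed-locus table, because any fixed point of $\sigma$ on $C$ is a fixed point of $\widetilde{\sigma}$ on $S$.
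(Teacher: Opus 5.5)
Your proposal is correct and follows essentially the same argument as the paper: extend $\sigma$ to $\widetilde{\sigma}=(\alpha,\beta)$, rule out both factors being nontrivial via the four-point fixed locus, and factor the $\sigma$-invariant degree-four ruling map through the degree-two quotient. No further comment is needed.
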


\begin{proof}
Write $\widetilde{\sigma}=(\alpha,\beta)$.
By \cref{lem:44-unique-extension},
$\widetilde{\sigma}$ is a nontrivial involution,
so $\alpha^2=\beta^2=1$.
If both $\alpha$ and $\beta$ were nontrivial, then
$\widetilde{\sigma}$ would have only four fixed points
on $S$, contrary to the hypothesis.
Thus exactly one of $\alpha,\beta$ is trivial.

After interchanging the factors, suppose that $\alpha=1$.
The degree-four map $p_1|_C$ is then $\sigma$-invariant,
so it factors as
\[
    C\longrightarrow C/\langle\sigma\rangle
      \xrightarrow{\,f\,}\mathbb{P}^1.
\]
Since the quotient map has degree two, $\deg(f)=2$.
\end{proof}

We now apply these lemmas to $X_0^{34}(7)$:

\begin{proposition}\label{prop:34-7-no-44-embedding}
The Shimura curve $X_0^{34}(7)$ does not geometrically
embed in $\mathbb{P}^1\times\mathbb{P}^1$
with bidegree $(4,4)$.
\end{proposition}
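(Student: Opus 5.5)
We argue by contradiction, assuming $X\coloneq X_0^{34}(7)$ embeds as a smooth $(4,4)$ curve $C\subset S$, so $g(X)=9$. The Atkin--Lehner group is $W\coloneq W_0(34;7)=\langle w_2,w_{17},w_7\rangle\cong(\Z/2\Z)^3$. By \cref{lem:44-unique-extension} it embeds in $\Aut(S)$, and restricting $\chi_C$ gives a homomorphism $\chi\colon W\to\Z/2\Z$. The plan is to compute the fixed-point counts $r_m$ for all seven nontrivial $w_m$ using \cref{thm: Ogg_fixed_pts}, and the genera of the quotients via Riemann--Hurwitz. We then show that no homomorphism $\chi$ is compatible with \cref{lem:44-swapping-fixed-points} and \cref{lem:44-quotient-degree-two}.

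By \cref{lem:fixed-point-obstruction-P1xP1} with $a=b=4$, each $r_m\in\{0,4,8\}$. The constraints are as follows.
\begin{enumerate}[(i)]
\item If $r_m\neq 8$, then $w_m\in\ker\chi$, by \cref{lem:44-swapping-fixed-points}.
\item If $r_m=8$ and $w_m\in\ker\chi$, then by \cref{lem:44-quotient-degree-two} the genus-$4$ quotient $X/\langle w_m\rangle$ would be hyperelliptic.
\end{enumerate}
For constraint (ii) I would cite the classification of geometrically hyperelliptic Atkin--Lehner quotients of Shimura curves, or compute directly, to show that the relevant genus-$4$ quotients are not hyperelliptic. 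A useful tool here: a hyperelliptic quotient whose hyperelliptic involution is not induced by $W$ must still have $W/\langle w_m\rangle$ commuting with that involution. Alternatively, use Castelnuovo--Severi against a genus-$\ge 1$ further quotient, which shows the degree-$2$ map would factor through a larger quotient, contradicting the genus computations.

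Hence every $w_m$ with $r_m=8$ must be swapping. Since $\ker\chi$ has index at most $2$, the non-kernel elements form a coset of size $0$ or $4$. If the involutions with $r_m=8$ are not exactly such a coset, we obtain a contradiction. For instance, this happens if an odd number of them have $r_m=8$, or if two of them together with their product all have $r_m=8$, since the product of two swapping elements is non-swapping.

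If the fixed-point data happen to be consistent with some $\chi$, I would add a finer constraint. Let $w,w'$ be two swapping involutions. Then $ww'$ preserves the rulings and has an extension $(\alpha,\beta)$ with $\beta=\alpha^{-1}$ conjugate, so $ww'$ is of type $(\tau,\tau)$. Such an element has $r\le 4$, and Riemann--Hurwitz (as in \cref{lem:fixed-point-obstruction-P1xP1}) forces $r\in\{0,4\}$. This is compatible with (i), so the real leverage is (ii).

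The main obstacle is the numerics. We must get the Ogg counts right, especially the local embedding numbers $\nu_p$ at $p=2,17,7$ for the orders $\Z[i]$, $\Z[\sqrt{-2}]$, $\Z[\sqrt{-17}]$, $\Z[\sqrt{-7}]$, $\Z[\tfrac{1+\sqrt{-7}}2]$, $\Z[\sqrt{-34}]$, and so on. We must also certify non-hyperellipticity of the genus-$4$ quotients with $8$ fixed points. I would first compute all $r_m$ (for example with the code from \cite{PS25b}) and hope the parity/coset obstruction alone already kills every $\chi$. If it does not, I would fall back on showing the relevant quotient is not hyperelliptic, using \cite{GY17}, or the $\F_q$-point-count method of \cref{NO-Fp} via a rational CM point and \cref{corollary: map_over_k}.
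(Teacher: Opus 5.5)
Your framework is the paper's: extend the Atkin--Lehner involutions to $\Aut(S)$, use \cref{lem:44-swapping-fixed-points} to place every involution with $r_m\neq 8$ in $\ker\chi$, and then contradict \cref{lem:44-quotient-degree-two} with a non-hyperelliptic quotient.

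\textbf{How the paper finishes.} The paper's data are $r_2=r_7=r_{119}=0$ and $r_{34}=8$. Since $w_{34}=w_2w_7w_{119}$, and indeed $w_2,w_7,w_{119}$ generate $W$, your constraint (i) alone forces $\chi|_W=0$. So only the single quotient $Y=X/\langle w_{34}\rangle$ needs to be shown non-hyperelliptic, and the paper takes this from \cite{AH26}.

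\textbf{Where your plan overshoots or is confused.} Your coset formulation asks for non-hyperellipticity of every quotient with $r_m=8$. Here that also includes $w_{14}$, $w_{17}$ and $w_{238}$, so it is more than needed. Also, your hope that the coset obstruction alone might kill every $\chi$ cannot work. The trivial $\chi$ always satisfies (i), so some non-hyperellipticity input is unavoidable.

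\textbf{Two corrections.} First, an involution with $8$ fixed points on a genus-$9$ curve has quotient of genus $(2\cdot 9+2-8)/4=3$, not $4$. Second, this breaks your Castelnuovo--Severi fallback. The quotients of $Y$ by the remaining Atkin--Lehner involutions all have genus $1$ (Ogg gives $r_{14}=r_{17}=r_{238}=8$, and $X/W$ has genus $0$). Castelnuovo--Severi then only gives $3\le 2\cdot 0+2\cdot 1+(2-1)(2-1)=3$, which is no contradiction; a genus-$3$ curve can be both hyperelliptic and bielliptic. (In genus $4$ your argument would have worked, which is presumably why the slip hid the problem.) So non-hyperellipticity of $Y$ needs genuine outside input: explicit equations as in the paper, or a point-count argument on a good reduction.
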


\begin{proof}
Put $X=X_0^{34}(7)$.
The fixed-point counts needed below are
\[
\begin{aligned}
    \#\operatorname{Fix}_X(w_2)
    &=\#\operatorname{Fix}_X(w_7)
     =\#\operatorname{Fix}_X(w_{119})=0,\\
    \#\operatorname{Fix}_X(w_{34})&=8.
\end{aligned}
\]

Suppose that $X$ admits a geometric embedding of
bidegree $(4,4)$, and let
$\chi:\Aut(X)\longrightarrow
    \Z/2\Z$
be the homomorphism associated with this embedding. By \cref{lem:44-swapping-fixed-points} we know that $w_2,w_7,w_{119}\in\ker\chi$. The Atkin--Lehner relation $w_{34}=w_2w_7w_{119}$ therefore gives $w_{34}\in\ker\chi$.

Since $\#\operatorname{Fix}_X(w_{34})=8>4$, \cref{lem:44-quotient-degree-two} implies that $Y \coloneq X/\langle w_{34}\rangle$ admits a degree-two morphism to $\mathbb{P}^1$.
However, $Y$ is a nonhyperelliptic curve of genus three
by \cite{AH26}. This is a contradiction.
\end{proof}

\begin{proposition} For $D \in \{955, 1149\}$, $X_0^{D}(1)$ does not geometrically embed in
$\P^1\times\P^1$ with bidegree $(8,10)$.
\end{proposition}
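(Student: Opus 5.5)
We will follow the template used above for $X_0^{889}(1)$ and for $X_0(769)$. Suppose $X \coloneq X_0^{D}(1)$ with $D\in\{955,1149\}$ geometrically embeds with bidegree $(8,10)$. Then $g(X)=7\cdot 9=63$, and $\gon_{\C}(X)=8$ by \cite[Theorem 1.2]{CDJP19}. Here $955=5\cdot 191$ and $1149=3\cdot 383$, so $X$ carries the three Atkin--Lehner involutions $w_p,w_q,w_D$.

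\emph{Step 1 (factorisation).} Choose an involution $w_m$ whose quotient $X/\langle w_m\rangle$ has genus $g'$ with $63 > 2g' + 7$, i.e.\ $g'\le 27$. We expect the quotients to have genus roughly $(63+1)/2-\#\Fix/4$, so around $27$–$31$, and we will compute the genera via Ogg's fixed-point counts (\cref{thm: Ogg_fixed_pts}) together with Riemann--Hurwitz. Take an $m$ for which $g'\le 27$. By Castelnuovo--Severi (\cref{theorem: CS}) applied to the degree-$8$ ruling map and $X\to X/\langle w_m\rangle$, the $8$-gonal map factors through this degree-$2$ Galois quotient, since $\P^1$ does not admit a proper factorisation here. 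Hence $\gon_{\C}(X/\langle w_m\rangle)\le 4$.

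\emph{Step 2 (descent to $\Q$).} Pass to a further quotient $Z\coloneq X/\langle w_p,w_q\rangle=X_0^D(1)^*$, or to whichever double quotient contains $X/\langle w_m\rangle$ and has a $\Q$-rational point. Such a point exists as the image of a rational CM point; we will check this with the code of \cite{PS25b}, as in the $D=889$ case. The map $X/\langle w_m\rangle\to Z$ has degree $2$, so $\gon_{\C}(Z)\le 4$. We must then descend to $\Q$. If $g(Z)\ge 10$, \cref{corollary: low_gon_eq} gives $\gon_{\Q}(Z)=\gon_{\C}(Z)\le 4$. If $g(Z)$ is smaller, we will instead work directly with $Y=X/\langle w_m\rangle$, provided it has a rational point, since $g(Y)\ge 10$. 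Alternatively we use the Tower Theorem (\cref{thm: tower-theorem}) plus good reduction (\cref{lemma : good_reduction_lemma}), exactly as in \cref{prop:gonality-at-least-seven}, handling the degree-$2$ subcase by Hasse--Weil.

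\emph{Step 3 (point count).} Find a small prime power $q$, coprime to $D$, with $\#Z(\F_q)>4(q+1)$ (or $\#Y(\F_q)>4(q+1)$). We compute these counts with the code of \cite{MPSS25}. \cref{NO-Fp} then gives $\gon_{\Q}>4$, which is a contradiction.

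The main obstacle is Step 2. We need a quotient that is simultaneously (i) of genus $\ge 10$, so that $\Q$-gonality equals $\C$-gonality, (ii) equipped with a rational CM point, and (iii) point-rich enough over some small $\F_q$. If no $w_m$ quotient has genus $\le 27$, we fall back on the two-involution Castelnuovo--Severi proposition with bound $4g(X/W)+3\cdot 7$. If the rational-point condition fails for every candidate, we use the Tower-theorem route over $\F_q$ instead, bounding $\#\widetilde X(\F_q)$ through a degree-$2$ or degree-$4$ cover of a low-genus curve.
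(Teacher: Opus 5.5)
There is a genuine gap, and it is in Step~1, before any descent or point-counting question comes up. For these two discriminants no Atkin--Lehner quotient is small enough.

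Ogg's formula gives the following fixed-point counts. For $D=955$: $r_5=h(-20)\bigl(1-\left(\frac{-20}{191}\right)\bigr)=2\cdot 2=4$, then $r_{191}=0$ because $\left(\frac{-191}{5}\right)=1$ kills the local factor at $5$, and $r_{955}=h(-955)+h(-3820)=4+12=16$. For $D=1149$ the same computation gives $r_3=4$, $r_{383}=0$ and $r_{1149}=h(-4596)=16$. In both cases the three quotients $X/\langle w_m\rangle$ have genera $31,32,28$, and $g(X^*)=14$. The best Castelnuovo--Severi bound is therefore $2\cdot 28+7=63=g(X)$. This is not violated, so nothing forces the $8$-gonal map to factor through any $X/\langle w_m\rangle$. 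That is exactly what separates $D\in\{955,1149\}$ from $D=889$, where the $w_{127}$-quotient has genus $27$. Your two-involution fallback also fails, since $4g(X^*)+3\cdot 7=77>63$. The paper's version of that proposition additionally needs an odd entry in the bidegree, and here $8$ and $10$ are both even. So Steps~2 and~3 never get started.

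The missing idea is to drop the involutions and apply \cref{thm: tower-theorem} directly to the degree-$8$ ruling $\pi_1$ on $X$ itself. This needs no rational point, which matters since $X(\Q)=\emptyset$. It yields $\pi\colon X\to Y$ over $\Q$ of degree $d'\mid 8$ with $g(Y)\le(8/d'-1)^2$. The cases are then handled as follows:
\begin{enumerate}
\item[(i)] $d'=1$ fails on genus, since $63>49$.
\item[(ii)] $d'\in\{4,8\}$ fail on $\#\widetilde X(\F_4)=70$, via the good reduction lemma and Hasse--Weil.
\item[(iii)] $d'=2$ leaves $g(Y)\le 9$. Here a Hasse--Weil count over $\F_4$ is \emph{not} enough, since $2(4+1+2\cdot 9\cdot 2)=82>70$. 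So handling the degree-$2$ subcase by Hasse--Weil, as in \cref{prop:gonality-at-least-seven}, does not close the argument.
\end{enumerate}
The paper instead applies Castelnuovo--Severi to $\pi$ against each ruling. The inequalities $63>2\cdot 9+7$ and $63>2\cdot 9+9$ force both $\pi_1$ and $\pi_2$ to factor through the degree-$2$ map $\pi$. Hence $(\pi_1,\pi_2)$ factors through $\pi$ and cannot be a closed immersion.
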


\begin{proof}
We write down the argument for $D = 955$, but the same proof also works for $D = 1149$ using
\[
    \#(X_0^{D}(1)(\F_4))=70, \quad \text{for } D \in \{955, 1149\}.
\]

Let $X \coloneq X_0^{955}(1)$. Suppose, for a contradiction, that there exists a closed immersion $\iota:X
    \hookrightarrow
    \P^1
    \times
    \P^1$ whose image has bidegree $(8,10)$.

After interchanging the two factors if necessary, let
$\pi_1 :X\longrightarrow\P^1, \pi_2 :X\longrightarrow\P^1$
be the two projections, with $\deg(\pi_1)=8,\deg(\pi_2)=10$.

Apply the Tower~\cref{thm: tower-theorem} to the degree-$8$ morphism $\pi_1$.
There exists a smooth projective curve $Y$ defined over $\Q$
and a morphism $\pi:X\longrightarrow Y$ defined over $\Q$, of degree $d'$ dividing $8$, such that
\[
    g(Y)\le
    \left(\frac{8}{d'}-1\right)^2.
\]
Thus $d'\in\{1,2,4,8\}$.
We first eliminate all possibilities except $d'=2$.

If $d'=1$, then $\pi$ is an isomorphism, since both curves are
smooth and projective. Therefore $63=g(X)=g(Y)\le (8-1)^2=49$, which is impossible.

Suppose that $d'=4$. Then
\[
    g(Y)\le \left(\frac84-1\right)^2=1.
\]
If $g(Y)=1$, the good reduction \cref{lemma : good_reduction_lemma} gives, after reduction at $2$,
a degree-$4$ morphism $\widetilde\pi:\widetilde X\longrightarrow\widetilde Y$ defined over $\F_2$. Hence $\#\widetilde X(\F_4)
    \le
    4\,\#\widetilde Y(\F_4)$.
By the Hasse--Weil bound,
\[
    \#\widetilde Y(\F_4)
    \le
    4+1+2g(Y)\sqrt4
    \le 9.
\]
Consequently $70
    =\#\widetilde X(\F_4)
    \le 4\cdot 9=36$, a contradiction.

If instead $g(Y)=0$, the genus-zero part of the good reduction \cref{lemma : good_reduction_lemma} gives a morphism from $\widetilde X$ to a rational curve over
$\F_2$ of degree at most $4$. Since a smooth rational curve
over a finite field is isomorphic to $\P^1$, we obtain
\[
    70
    \le 4\,\#\P^1(\F_4)
    =4(4+1)=20,
\]
again a contradiction. Thus $d'\neq4$.

If $d'=8$, then
\[
    g(Y)\le
    \left(\frac{8}{8}-1\right)^2=0.
\]
The same genus-zero part of the good reduction \cref{lemma : good_reduction_lemma} gives a
morphism of degree at most $8$ from $\widetilde X$ to a rational
curve over $\F_2$. Therefore
\[
    70
    \le 8\,\#\P^1(\F_4)
    =8 \cdot 5 =40,
\]
again impossible.

We conclude that
\[
    \deg(\pi)=2
    \qquad\text{and}\qquad
    g(Y)\le
    \left(\frac82-1\right)^2=9.
\]

We now use the Castelnuovo--Severi inequality. Consider the morphism
\[
    (\pi,\pi_1):
    X
    \longrightarrow
    Y\times\P^1,
\]
let $Z$ be the normalization of its image, and let
\[
    h:X\longrightarrow Z
\]
be the induced morphism, so that $\pi$ and $\pi_1$ both factor through $h$:
\[
    \pi = p\circ h,
    \qquad
    \pi_1 = q\circ h,
\]
where $p:Z\to Y$ and $q:Z\to\P^1$ are induced by the two
projections. In particular
\[
    2=\deg(\pi)=\deg(h)\deg(p).
\]

Suppose first that $\deg(h)=1$, i.e.\ $(\pi,\pi_1)$ is birational onto its
image. Since $\deg(\pi)=2$ and $\deg(\pi_1)=8$, Castelnuovo--Severi gives
\[
    g(X)
    \le
    2g(Y)+8g(\P^1)+(2-1)(8-1).
\]
Thus $63 \le 2\cdot9+7=25$, which is impossible.

Hence $\deg(h)=2$ and $\deg(p)=1$. Since $Z$ and $Y$
are smooth projective curves, $p$ is an isomorphism. Setting $f_Y = q\circ p^{-1}:
    Y\longrightarrow\P^1$,
we obtain
\[
    \pi_1
    = q\circ h
    = f_Y\circ p\circ h
    = f_Y\circ\pi,
\]
so $\pi_1$ factors through $\pi$.

Apply the same argument to the degree-$10$ projection $\pi_2$: let $Z'$ be
the normalization of the image of $(\pi,\pi_2)$, with induced morphism
$h':X\to Z'$, so that $\deg(h')\deg(p')=2$ for the
induced morphism $p':Z'\to Y$. If $\deg(h')=1$, then
$(\pi,\pi_2)$ is birational onto its image and Castelnuovo--Severi gives
\[
    63
    \le
    2g(Y)+(2-1)(10-1)
    \le
    18+9
    =27,
\]
again impossible. Hence $\deg(p')=1$, so $p'$ is an isomorphism and,
as before, $\pi_2$ factors through $\pi$:
\[
    \pi_2 = g_Y\circ\pi,
    \qquad
    g_Y:Y\longrightarrow\P^1.
\]

Consequently $\iota=(\pi_1,\pi_2)
    =(f_Y,g_Y)\circ\pi$.
Thus $\iota$ factors through the degree-$2$ morphism $\pi$, and
therefore cannot be a closed immersion. This contradiction proves
that no geometric embedding of bidegree $(8,10)$ exists.
\end{proof}

\begin{proposition}
We have the following:
\begin{itemize}
    \item $X_0^{62}(5)$ and $X_0^{94}(3)$ do not geometrically embed in $\P^1 \times \P^1$ with bidegree $(4,6)$,
    \item $X_0^{51}(7)$ and $X_0^{301}(1)$ do not geometrically embed in $\P^1 \times \P^1$ with bidegree $(4,8)$,
    \item $X_0^{694}(1)$ does not geometrically embed in $\P^1 \times \P^1$ with bidegree $(5,8)$.
\end{itemize}
\end{proposition}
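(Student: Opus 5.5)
The plan is to rule out each case using the Atkin--Lehner fixed-point data together with the obstructions already established: \cref{lem:fixed-point-obstruction-P1xP1}, \cref{lemma: 2a_obstruction}, and the Castelnuovo--Severi propositions. Every bidegree in the statement has $a<b$, and $a,b\ge 4$, so Takahashi's theorem applies. The genera are $15$ for $(4,6)$, $21$ for $(4,8)$, and $28$ for $(5,8)$.

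For each pair I would first compute $r_m=\#\Fix(w_m)$ for all Hall divisors $m\parallel DN$, using \cref{thm: Ogg_fixed_pts}, as was done for the earlier pairs. Next I would compare these counts with the admissible sets in \cref{lem:fixed-point-obstruction-P1xP1}.
\begin{itemize}
\item For $(4,6)$ the admissible counts are $\{0,4,8,12\}$.
\item For $(4,8)$ they are $\{0,4,8,16\}$.
\item For $(5,8)$, where $a$ is odd and $b$ is even, they are $\{2,10\}$.
\end{itemize}
A single involution whose count falls outside the relevant set finishes that case immediately.

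I expect the survivors to be exactly the pairs where every count is admissible, since these passed the earlier filters. For them I would apply \cref{lemma: 2a_obstruction} to the large fixed-point values, which is what forces an extension of type $(1,\tau)$ or $(\tau,1)$.
\begin{itemize}
\item For $(4,6)$, count $12=2b$ forces $(\tau,1)$ and count $8=2a$ forces $(1,\tau)$.
\item For $(4,8)$, count $16=2b$ and count $8=2a$ play the same roles.
\end{itemize}
If two involutions both have count $2a$, or both have count $2b$, then their product extends to $(1,\tau_1\tau_2)$, respectively $(\tau_1\tau_2,1)$, so its count must again be $2a$, respectively $2b$. If one involution is of type $(\tau,1)$ and the other of type $(1,\tau')$, then their product is of type $(\tau,\tau')$ and must have count in $\{0,4\}$. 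I would check this product compatibility on the full group $W_0(D;N)\cong(\Z/2)^k$. The group homomorphism of \cref{lem:44-unique-extension} still applies here, since the extension is unique by the injectivity shown earlier.

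For $(5,8)$ the constraint is stronger. Any involution with count $10=2a$ must be of type $(1,\tau)$, and any involution with count $2$ must be of type $(\tau,\tau')$. Taking the product of two count-$10$ involutions $w_{m_1},w_{m_2}$ gives an extension $(1,\tau_1\tau_2)$, which forces $r_{m_1m_2}=10$. Taking the product of a count-$10$ involution with a count-$2$ involution gives an element whose first component is nontrivial, so by oddness of $a$ (the $(\tau,1)$ type needs $a$ even) its second component is also nontrivial and its count must be $2$.

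If the fixed-point data turn out to be fully compatible for some pair, I would fall back on the method used for $X_0^{34}(7)$. An involution of type $(1,\tau)$ with count $2a$ makes the degree-$a$ ruling factor through the quotient. For $a=4$ this gives a degree-$2$ map from $X/\langle w_m\rangle$ to $\P^1$, so that quotient must be hyperelliptic (or of genus $\le1$), and I would contradict this with \cite{GY17} or \cite{AH26}. For $(5,8)$ with a count-$16$ involution of type $(\tau,1)$, the degree-$8$ ruling would factor, giving a $4$-gonal quotient. I would then rule that out with point counts via \cref{NO-Fp} and \cref{corollary: low_gon_eq}, using a rational CM point.

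The main obstacle is this fallback step: the fixed-point combinatorics alone may not suffice, and the quotient-gonality input then has to come from the literature or from point counts.
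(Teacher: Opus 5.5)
Your strategy is the paper's: its whole proof is \cref{lemma: 2a_obstruction} applied to one triple of Atkin--Lehner involutions per curve. What your proposal leaves open, the existence of such a triple, is the only substantive content, and you only say you \emph{expect} it. The paper uses the following triples:
\begin{itemize}
\item $(62,5)$: $r_{155}=r_{310}=8$ and $r_2=4$;
\item $(94,3)$: $r_{141}=r_{282}=8$ and $r_2=4$;
\item $(51,7)$: $r_3=r_7=8$ and $r_{21}=0$;
\item $(301,1)$: $r_{43}=r_{301}=8$ and $r_7=0$;
\item $(694,1)$: $r_{347}=r_{694}=10$ and $r_2=2$.
\end{itemize}
In each case $w_{m_1}w_{m_2}=w_{m_3}$ with $r_{m_1}=r_{m_2}=2a$ and $r_{m_3}\neq 2a$. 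These counts can be checked by hand from \cref{thm: Ogg_fixed_pts}. For instance, for $(51,7)$ one gets $r_{21}=0$ because $\left(\frac{-84}{17}\right)=1$ kills the local factor at the ramified prime $17$. Note that every $r_{m_3}$ is itself an admissible count, so, as you anticipated, only the product argument does the work.

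With these data your fallback is never needed. That is fortunate, because it interchanges the two rulings. The extension of type $(1,\tau)$, which is the type forced by count $2a$, acts trivially on the first factor, so $p_1$ is invariant. The fibres of $p_1$ have type $(1,0)$ and meet $C$ in $(a,b)\cdot(1,0)=b$ points. So it is the degree-$b$ ruling that descends, giving $X/\langle w_m\rangle\to\P^1$ of degree $b/2$; this is exactly why that type forces $b$ even. Consequences:
\begin{itemize}
\item For $(4,6)$ and $(4,8)$, a count-$8$ involution only gives a map of degree $3$ or $4$ on the quotient, so non-hyperellipticity of $X/\langle w_m\rangle$ would give no contradiction. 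A degree-two map would require a count-$2b$ involution of type $(\tau,1)$.
\item For $(5,8)$, a count-$16$ involution cannot exist, since $16\notin\{2,10\}$, as you noted yourself. The degree-$4$ map on a quotient comes from a count-$10$ involution.
\end{itemize}
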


\begin{proof}
We use \cref{lemma: 2a_obstruction} together with the following information:

\[
\begin{array}{c|ccc}
(D,N) & r_{m_1} & r_{m_2} & r_{m_3}\\
\hline
(62,5) & r_{155}=8 & r_{310}=8 & r_2=4,\\
(94,3) & r_{141}=8 & r_{282}=8 & r_2=4, \\
(51,7) & r_3 = 8 & r_7 = 8 & r_{21} =0 , \\
(301,1) & r_{43}=8 & r_{301}=8 & r_7=0 , \\
(694,1) & r_{347}= 10 & r_{694}= 10 & r_2= 2.
\end{array}
\]

\end{proof}

\begin{proposition} \label{proposition: not_in_P2}
$X_0^{D}(N)^+$ does not geometrically embed in $\P^2$ and $X_0^{D}(N)^+(\Q) \ne \emptyset$ for $(D,N) \in \{ (6,157),(622,1),(21,32),(38,31),(39,23),(745,1), (1365,1) \}$. 
\end{proposition}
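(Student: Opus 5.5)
We have to prove two things for each listed pair: $X_0^D(N)^+(\Q)\neq\emptyset$, and $X_0^D(N)^+$ admits no smooth plane model over $\C$.

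\emph{Rational points.} For each pair we exhibit a CM point on $X_0^D(N)$ whose image on $X_0^D(N)^+ = X_0^D(N)/\langle w_{DN}\rangle$ is $\Q$-rational. By \cref{thm: Ogg_fixed_pts}, the fixed points of $w_{DN}$ have CM by $\Z[\sqrt{-DN}]$ or, when $DN\equiv 3 \pmod 4$, also by $\Z[(1+\sqrt{-DN})/2]$. The image of such a fixed point is defined over the ring class field. When that class field is small enough, a Galois orbit collapses to a rational point on the quotient. Failing this, we use CM points of class number one whose orbit under $\langle w_{DN}\rangle$ is $\Q$-rational. In practice we run the code of \cite{PS25b}, as in the earlier proposition for $(D,N)=(6,271)$, to certify a $\Q$-rational CM image for each of the seven pairs.

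\emph{No smooth plane model.} First compute $g^+ \coloneq g(X_0^D(N)^+)$ from Riemann--Hurwitz, using $\#\Fix(w_{DN})$ from Ogg's formula. If $g^+$ is not of the form $(d-1)(d-2)/2$, we are done immediately. Otherwise $d$ is determined, and I expect $d\ge 5$, so $g^+\ge 6$. Suppose $X_0^D(N)^+$ is a smooth plane curve of degree $d$. Then the remaining Atkin--Lehner involutions $w_m$ descend to involutions of $X_0^D(N)^+$. By \cref{theorem: smooth_plane_fixed_points}, each such involution has exactly $d$ fixed points if $d$ is even and $d+1$ if $d$ is odd.

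The fixed points of the induced involution $\overline{w}_m$ on the quotient can be counted from data on $X_0^D(N)$. Apply Riemann--Hurwitz to the degree-two map $X_0^D(N)^+ \to X_0^D(N)/\langle w_m, w_{DN}\rangle$:
\[
\#\Fix(\overline{w}_m) = 2g^+ + 2 - 4\, g\bigl(X_0^D(N)/\langle w_m, w_{DN}\rangle\bigr),
\]
where the genus on the right follows from the fixed-point counts of $w_m$, $w_{DN/m}$ and $w_{DN}$ on $X_0^D(N)$. It then suffices to find one $m$ for which this count differs from $d$ or $d+1$. This is the same strategy the paper alludes to in the proof of \cref{prop: not_plane_sextic}.

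\emph{Fallback cases.} If $D N$ is prime to enough primes but every $w_m$ happens to satisfy the fixed-point condition, we use gonality instead. Since we have a rational point, \cref{thm: Max_Noether} together with the descent remark \cite[Remark~20]{RX18} gives $\gon_{\Q}(X_0^D(N)^+) = d-1$. We contradict this with a point count $\#X_0^D(N)^+(\F_q) > (d-1)(q+1)$ via \cref{NO-Fp}, noting that good reduction of the quotient follows from \cref{lemma : good_reduction_lemma}. Alternatively, we compare Betti numbers as in \cref{prop: not_plane_sextic}.

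\emph{Main obstacle.} I expect the hardest part to be the cases, such as $D=1365$ or $(21,32)$, where a plane-curve genus does occur and the involution counts happen to be consistent. There one must produce an adequate finite-field point count or a syzygy computation, and in the composite-level cases the Ogg local embedding numbers $\nu_p$ at $p^2\mid N$ need care.
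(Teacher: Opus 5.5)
Your proposal is correct and follows the paper's proof. The rational points are certified computationally as images of CM points (the paper uses the code of \cite{MPSS25}). Non-planarity comes from comparing the genus of a further quotient $X_0^D(N)/\langle w_m, w_{DN}\rangle$ with the value forced by \cref{theorem: smooth_plane_fixed_points}, which is exactly your fixed-point comparison. In all seven cases this comparison already succeeds: either $g^+=10$ and the quotient has genus $5$ instead of $4$, or $g^+=21$ and it has genus $11$ instead of $9$. So your fallback via point counts or Betti numbers, including for $(21,32)$ and $D=1365$, is never needed.
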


\begin{proof}

Using the code from \cite{MPSS25} we prove that all quotients by the Fricke involution have $\Q$-rational points coming from images of CM points. 

$X_0^{6}(157)^+$ has genus $10 =\frac{(6-1)(6-2)}{2}$.
By \autoref{theorem: smooth_plane_fixed_points}, any quotient of a plane sextic by an involution would have genus $4$, while $X_0^{6}(157)/\langle w_2, w_{471} \rangle$ has genus $5$.

Similarly,

\[
g(X_0^{622}(1)^+) = 10, \quad g(X_0^{622}(1)^*) = 5.
\]

$X_0^{745}(1)^+$ has genus $21 =\frac{(8-1)(8-2)}{2}$. By \autoref{theorem: smooth_plane_fixed_points}, any quotient of a plane octic by an involution would have genus $9$, while $X_0^{745}(1)^*$ has genus $11$.

Similarly,

\[ g(X_0^{21}(32)^+) =21, \quad 
g(X_0^{21}(32)/\langle w_{21},w_{32} \rangle) = 11,
\]

\[ g(X_0^{38}(31)^+) =21, \quad 
g(X_0^{38}(31)/\langle w_{38},w_{31} \rangle) = 11,
\]

\[ g(X_0^{39}(23)^+) =21, \quad 
g(X_0^{39}(23)/\langle w_{39},w_{23} \rangle) = 11,
\]

\[ g(X_0^{1365}(1)^+) =21, \quad 
g(X_0^{1365}(1)/\langle w_3,w_{1365} \rangle) = 11.
\]

\end{proof}

\begin{proposition}\label{prop:replacement-54}
For $(D,N)\in\{(6,157),(622,1)\}$, the Shimura curve $X_0^D(N)$ does
not geometrically embed in $\P^1\times\P^1$ with bidegree $(6,6)$.
For
\[
  (D,N)\in
  \{(21,32),(38,31),(39,23),(745,1),(1365,1)\},
\]
it does not geometrically embed in $\P^1\times\P^1$ with bidegree
$(8,8)$.
\end{proposition}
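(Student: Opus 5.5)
We apply \cref{prop:balanced-double-quotient} to the Fricke quotient $\pi\colon X\to Y\coloneq X_0^D(N)^+$, taking $n=6$ in the first group and $n=8$ in the second. First we check the numerical hypotheses. For $(6,157)$ and $(622,1)$ we need $g(X)=25=(6-1)^2$ and $g(Y)=10=\frac{(6-1)(6-2)}{2}$. For the other five pairs we need $g(X)=49=(8-1)^2$ and $g(Y)=21=\frac{(8-1)(8-2)}{2}$. The values of $g(Y)$ are recorded in \cref{proposition: not_in_P2}. The values of $g(X)$ follow from Riemann--Hurwitz together with Ogg's count \cref{thm: Ogg_fixed_pts}: $\#\Fix(w_{DN})$ must equal $2n$, and this is exactly the condition forced in the balanced case. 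Once this is checked, any hypothetical $(n,n)$-embedding of $X$ gives one of two alternatives: a degree $n/2$ map $Y\to\P^1$, or a smooth plane model of $Y$ of degree $n$.

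The plane alternative is excluded directly by \cref{proposition: not_in_P2}. That result shows $Y$ does not geometrically embed in $\P^2$. Its argument uses \cref{theorem: smooth_plane_fixed_points}: every involution of a smooth plane curve of degree $n$ has $n$ fixed points, so every quotient of $Y$ by an involution would have genus $4$ when $n=6$ and genus $9$ when $n=8$. The further quotients listed there have genus $5$ and $11$, respectively.

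It remains to exclude a geometric degree $n/2$ map $Y\to\P^1$, i.e.\ trigonality of a genus $10$ curve or tetragonality of a genus $21$ curve.

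\emph{Case $n=6$.} We have $g(Y)=10>(3-1)^2$, and \cref{proposition: not_in_P2} gives $Y(\Q)\neq\emptyset$ via a rational CM point. So \cref{corollary: map_over_k} (or the trigonal part of the \cite[Corollary 4.6]{NO24} statement) makes $Y$ trigonal over $\Q$. To contradict this, we find a prime power $q$, with the residue characteristic not dividing $DN$, such that $\#\widetilde{Y}(\F_q)>3(q+1)$. We compute the count with the code of \cite{MPSS25} and then apply \cref{NO-Fp}. Alternatively, \cref{lemma: trigonal_fixed_involution} gives a fixed-point obstruction: $Y$ has even genus, so an involution of $Y$ would have $2$ or $6$ fixed points. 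The involution induced by $w_2$ has a quotient of genus $5$, so Riemann--Hurwitz gives $2\cdot 10+2-4\cdot 5=2$ fixed points, which is allowed. So the point count is the route we expect to need.

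\emph{Case $n=8$.} We have $g(Y)=21\geq 10$ and $Y(\Q)\neq\emptyset$, so \cref{corollary: low_gon_eq} gives $\gon_{\Q}(Y)=\gon_{\C}(Y)\leq 4$. We then seek $q$ with $\#\widetilde{Y}(\F_q)>4(q+1)$, and \cref{NO-Fp} gives the contradiction. If the point counts are too small, there is a fallback. A degree $4$ map on $Y$ together with the genus $11$ quotient $Y\to Y'$ would violate Castelnuovo--Severi unless the map factors through $Y'$, since $21>2\cdot 11+3$ fails. Because $21\le 25$, this bound is not decisive, so instead we would try the Tower theorem as in the proof of \cref{prop:gonality-at-least-seven}: it reduces to $\Q$-maps $Y\to Y''$ of degree $2$ with $g(Y'')\le 1$, or to rational maps of degree $4$, and finite field counts and Hasse--Weil bound both cases.

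We expect the main obstacle to be this last step: finding suitable $q$ with large enough point counts on $\widetilde{Y}$ for all seven curves, and in particular ruling out tetragonality of the genus $21$ quotients, where the margin is tightest.
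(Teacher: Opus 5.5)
Your proposal is correct and follows the paper's proof. You apply \cref{prop:balanced-double-quotient} to the Fricke quotient and discard the plane alternative via \cref{proposition: not_in_P2}. You then descend the degree-$n/2$ map to $\Q$ and contradict it with \cref{NO-Fp}; the paper uses \cref{corollary: low_gon_eq} for both $n=6$ and $n=8$, but your use of \cref{corollary: map_over_k} for $n=6$ works equally well. The point-count step you flag as the main obstacle does go through without any fallback: the paper uses $\#Y(\F_5)=22>18$ for $(6,157)$, $\#Y(\F_9)=40>30$ for $(622,1)$, and for the five genus-$21$ quotients (in the order listed) $q=25,9,4,9,4$ with counts $120,52,36,64,36$, each exceeding $4(q+1)$.
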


\begin{proof}
Put
\[
  X \coloneq X_0^D(N),\qquad Y \coloneq X_0^D(N)^+,
\]
and choose $n$ according to the table below. The Fricke quotient
$X\to Y$ has degree two, with
\[
  g(X)=(n-1)^2,
  \qquad
  g(Y)=\frac{(n-1)(n-2)}{2}.
\]
By \cref{proposition: not_in_P2}, $Y(\Q)\neq\varnothing$ and $Y$ admits no geometric
smooth plane model.

Suppose that $X$ geometrically embeds in $\P^1\times\P^1$ with
bidegree $(n,n)$. \cref{prop:balanced-double-quotient}
then implies $\gon_{\C}(Y)\leq n/2$.
Since $n/2\leq 4$ and $g(Y)\geq 10$, \cref{corollary: low_gon_eq} gives
$\gon_{\Q}(Y)=\gon_{\C}(Y)\leq n/2$.
The point counts in the following table contradict this inequality
in every case:
\begin{center}
\begin{tabular}{rrrrrr}
\toprule
$(D,N)$ & $n$ & $g(Y)$ & $q$ & $\#Y(\F_q)$ & $\frac n2(q+1)$ \\
\midrule
$(6,157)$  & $6$ & $10$ & $5$  & $22$  & $18$  \\
$(622,1)$  & $6$ & $10$ & $9$  & $40$  & $30$  \\
$(21,32)$  & $8$ & $21$ & $25$ & $120$ & $104$ \\
$(38,31)$  & $8$ & $21$ & $9$  & $52$  & $40$  \\
$(39,23)$  & $8$ & $21$ & $4$  & $36$  & $20$  \\
$(745,1)$  & $8$ & $21$ & $9$  & $64$  & $40$  \\
$(1365,1)$ & $8$ & $21$ & $4$  & $36$  & $20$  \\
\bottomrule
\end{tabular}
\end{center}
In each row the characteristic of $\F_q$ does not divide $DN$, so
$X$ has good reduction there. \cref{lemma : good_reduction_lemma} gives good reduction for $Y$,
and \cref{NO-Fp} yields $\gon_{\Q}(Y)>n/2$, a contradiction.
\end{proof}

\section{Shimura curves $X_0^D(N)$ that admit a smooth plane model}

Assume $X_0^D(N)$ is a Shimura curve that admits a degree $d$ smooth plane model over $\C$. We know that the geometric gonality of a smooth plane curve of degree $d$ is $d-1$, see \cite[Theorem A]{CK90}, while the genus-degree formula states that $g(X_0^D(N)) = \frac{(d-1)(d-2)}{2}$. Then using \autoref{theorem: genus_gonality_inequality} we obtain:
\[
\frac{1}{2} \cdot \frac{975}{4096} \left( \frac{(d-1)(d-2)}{2}
-1\right) = \frac{1}{2} \cdot \frac{975}{4096} ( g(X_0^D(N))-1) \le \gon_{\C} (X_0^D(N)) = d-1,
\]
which implies that $d \le 18$. Thus the genus of $X_0^D(N)$ is at most $136$.

By \cref{lemma: genus_upper_bound}, we find that for $DN > 99049$ we have $g(X_0^D(N)) > 136$. Therefore, we only need to consider the pairs $(D,N)$ where $DN \le 99049$. We may consider only those pairs for which $\omega(D)$ is even and $\gcd(D,N) = 1$, as the set of ramified places has even cardinality. We further need that 
\[
g(X_0^D(N)) \in \left \{ \frac{(d-1)(d-2)}{2} : 2 \le d \le 18  \right \}.
\]
Thus we are left to analyze $274$ pairs $(D,N)$.
Let $(D,N)$ be such a pair, for which \[g(X_0^D(N)) = \frac{(d-1)(d-2)}{2}\] for some $d \in \{ 2,3, \dots, 18 \}$.

On one hand, by \autoref{thm: Ogg_fixed_pts}, the fixed points of $w_m$ have CM by the indicated quadratic orders, and their number $r_m \coloneq \#\Fix_{X_0^D(N)}(w_m)$ is obtained by summing the corresponding local-embedding contributions. On the other hand, if $X_0^D(N)$ admits a plane model of degree $d \ge 4$, then by \autoref{theorem: smooth_plane_fixed_points} we have that:
\begin{equation}\label{eqn: equal_genera}
\#\Fix_{X_0^D(N)}(w_m) = d + \frac{1-(-1)^d}{2} \quad \text{for all } \, 1 < m \parallel DN.
\end{equation}
For $260$ out of the $274$ candidate pairs $(D,N)$ we have that $g(X_0^D(N)) \ge 3$, while for the remaining $14$ we have $g(X_0^D(N)) \le 1$.
Running our code on these $260$ pairs $(D,N)$ we find that \cref{eqn: equal_genera} does not hold for any of the curves with $g(X_0^D(N)) \ge 3$. Thus there is no Shimura curve $X_0^D(N)$ admitting a smooth plane model of degree $d \ge 4$.
In conclusion,  $X_0^D(N)$ admits a smooth plane model if and only if  $g(X_0^D(N)) \le 1$.

\bibliographystyle{amsalpha}
\bibliography{biblio}

\end{document}